\def\HH{\mathcal H}
\def\AA{\mathcal A}
\def\DA{D\left(\AA\right)}
\def\Dah{D(A^{\frac{1}{2}})}
\def\Da{D\left(A\right)}
\def\ut{u_t}
\def\utt{u_{tt}}
\def\yt{y_t}
\def\ytt{y_{tt}}
\def\Et{E\left(t\right)}
\newcommand {\nc}   {\newcommand}
\nc {\be}   {\begin{equation}} \nc {\ee}   {\end{equation}} \nc
\nc {\eeq}  {\end{eqnarray}} \nc {\beqs}
\nc {\eeqs} {\end{eqnarray*}}
\def\edc{\end{document}}
\newtheorem{thm}{Theorem}[section]
\newtheorem{cor}[thm]{Corollary}
\newtheorem{defi}[thm]{Definition}
\newtheorem{lem}[thm]{Lemma}
\newtheorem{prop}[thm]{Proposition}
\numberwithin{equation}{section}
\theoremstyle{definition}
\numberwithin{equation}{section}
 \renewenvironment{proof}{{\bfseries \noindent Proof.}}{\demo}
\newcommand\xqed[1]{%
  \leavevmode\unskip\penalty9999 \hbox{}\nobreak\hfill
  \quad\hbox{#1}}
\newcommand\demo{\xqed{$\square$} \\[0.1in]}
\begin{document}
\title[Optimal indirect stability  of a weakly  damped elastic system]{Optimal indirect stability  of a weakly damped elastic abstract system of  second order equations coupled by velocities} 
    \author{Farah Abdallah}
	\address{Lebanese University\\
		Faculty of sciences 1\\ 
		Khawarizmi Laboratory of Mathematics and Applications-KALMA, Hadath-Beirut}
	\email{farahabdallah2@ul.edu.lb}
 \author{Yacine Chitour}
	\address{L2S\\
		Paris-Saclay University, 3 Rue Joliot Curie, Gif-sur-Yvette, France. }
	\email{yacine.chitour@l2s.centralesupelec.fr}
     \author{Mouhammad Ghader}
	\address{Lebanese University\\
		Faculty of sciences 1, Khawarizmi Laboratory of Mathematics and Applications-KALMA, Hadath-Beirut \&  L2S, Paris-Saclay University, 3 Rue Joliot Curie, Gif-sur-Yvette, France. }
	\email{mhammadghader@hotmail.com}

	\author{Ali Wehbe}
	\address{Lebanese University\\
		Faculty of sciences 1\\ 
		Khawarizmi Laboratory of Mathematics and Applications-KALMA, Hadath-Beirut}
	\email{ali.wehbe@ul.edu.lb}
	
\keywords{Indirect stabilization, Coupled evolution equations, Fractional damping, Exponential stability, Polynomial stability, Optimal stability, Spectrum method}

\date{}
\begin{abstract}
In this paper, by means of the Riesz basis approach, we study the stability of a weakly damped system of two second order evolution equations coupled through the velocities (see \eqref{Coupled Equation 1.1}). If the fractional order damping becomes viscous and the waves propagate with equal speeds, we prove exponential stability of the system and, otherwise, we establish an optimal  polynomial decay rate. Finally, we provide some illustrative examples.
\end{abstract}
\maketitle
\section{Introduction}\label{Coupled Section 1}
 \noindent In this paper, we investigate   the energy decay rate of the following abstract  system of second order evolution equations
\begin{equation}\label{Coupled Equation 1.1}
\left\{
\begin{array}{lll}

\utt+aAu+A^{\gamma}\ut+\alpha \yt=0,

\\ \noalign{\medskip}

\ytt+Ay-\alpha \ut=0,
\end{array}
\right.
\end{equation}
 where $a>0$, $\gamma\leq0$, $\alpha\in\mathbb{R}^*$ is the coupling parameter and $A$ is a self-adjoint, coercive operator with a compact resolvent in a separable Hilbert space $H$ and with simple spectrum. The fractional  damping term $A^\gamma u_t$ is only applied at the first equation and the second equation is indirectly damped through the coupling between the two equations. The fractional order damping of the type $A^\gamma$, arising from the material property, has been introduced in \cite{Chen01} and, in the cases 
 $\gamma\in\{0,\frac{1}{2},1\}$, is referred to as 
 the so-called viscous damping, square-root (or structural) damping, and Kelvin-Voigt damping respectively. If $\gamma=\frac{1}{2}$, it was shown in \cite{Chen01} that the semigroup corresponding to the damped elastic model
$$\utt+Au+A^{\gamma}\ut=0,$$
is analytical, while the subsequent works in \cite{Chen02} and \cite{Chen03} showed that the semigroup is still analytical for $\frac{1}{2}\leq \gamma \leq 1$ but is only of Gevrey class for {$0<\gamma <\frac{1}{2}$}.
\\[0.1in]

\noindent In \cite{LiuZhang01}, Liu and Zhang   studied the energy decay rate of the weakly damped elastic abstract system described by
\begin{equation}\label{System new-1}
\left\{
\begin{array}{ll}
\displaystyle{u_{tt}+Au+Bu_t=0},\\ \noalign{\medskip}
\displaystyle{u(0)=u_0,\ u_t(0)=u_1},

\end{array}
\right.
\end{equation}
where $A$ is a self-adjoint, positive definite operator on a Hilbert space $H$. The dissipation operator $B$  is another positive operator satisfying $cA^\gamma u \leq Bu \leq CA^\gamma u $ for some constants $0 < c < C.$ When $\gamma < 0,$   they proved that the  energy of System \eqref{System new-1} has a  polynomial  decay rate of type $t^{\frac{1}{\gamma}}$  and that this decay rate is in some sense optimal. Regarding System \eqref{Coupled Equation 1.1} when $\alpha=0$, it  reduces to System \eqref{System new-1} with $B=A^\gamma$ and $a=1$.  In this case, we recover  the results of \cite{LiuZhang01}. \\[0.1in]
\noindent When the coupling acts through displacements, Loreti and Rao studied  in \cite{RaoLoreti01} the stability of  the following abstract system of coupled  equations
\begin{equation}\label{Coupled Equation 1.3}
\left\{
\begin{array}{lll}
\utt+Au+A^{\gamma}\ut+\alpha y=0,
          \\ \noalign{\medskip}
\ytt+Ay-\alpha u=0.
\end{array}\right.
\end{equation}
They proved that System \eqref{Coupled Equation 1.3} is {not  exponentially  stable} and an optimal polynomial energy decay rate of type $t^{-\tau\left(\gamma\right)}$ is obtained where
\begin{equation*}
\tau\left(\gamma\right)=
\left\{
\begin{array}{lll}

\frac{1}{\gamma+1},&-\frac{1}{2}\leq \gamma\leq 0,
         
                  \\  \noalign{\medskip}
         
-\frac{1}{\gamma},&-\frac{1}{2}\geq\gamma.

\end{array}
\right.
\end{equation*} 
Consequently,  the energy achieves its maximum optimal decay rate $t^{-2}$ when $\gamma=-\frac{1}{2}$. System \eqref{Coupled Equation 1.3} is the closest to our System \eqref{Coupled Equation 1.1}. However,  the coupling in \cite{RaoLoreti01}  acts through displacements while in this paper the coupling acts through the velocities. Indeed, the transmission between the two equations depends on the nature of the coupling; for instance, see \cite{Alabau04}, \cite{Alabau02}, \cite{KhodjaBader01}, and \cite{Kapitonov01}. 
\\[0.1in]

 \noindent The fact that only one equation of the coupled system is damped refers to the so-called class of indirect stabilization problems. The concept of indirect damping mechanisms has been introduced by Russell in \cite{Russell01}. That paper  is one of the firsts  to give an algebraic characterization of coupled indirectly damped vibration models.
 Before we start our study, we recall some results concerning the stability of systems of two equations coupled by velocities. In  \cite{Kapitonov01}, Kapitonov  considers a pair of coupled hyperbolic systems in some open subset of a domain.  One of these systems contains locally distributed damping. Under certain conditions imposed on the subset where the damping terms is effective, a uniform decay of the energy is established.  The results are proved by using multiplier techniques. Khodja and Bader in \cite{KhodjaBader01} study the  stability of a system of coupled one-dimensional wave equations posed on a finite interval $(0,1)$ with only one internal or boundary control. They show that the internal damping applied to only one of the equations never gives exponential stability if the wave speeds are different. If the wave speeds are the same, they present necessary and sufficient conditions for stability.  In addition, the simultaneous boundary stabilization of the same system is also studied in \cite{KhodjaBader01}. Let us mention the additional references \cite{Khodja01},  \cite{ Ammari02}, \cite{Wehbe05}, \cite{Ammari01}, and  \cite{Alabau07} for indirect stabilization of coupled equations via  one order terms. Next, we   recall some of the results related to the stability  of two equations coupled through displacements. In  \cite{Alabau04}, Alabau considers coupled equations with only one boundary control where it is shown for different examples such as the wave equations or the Kirchhoff plates that the full system can be strongly stabilized  provided that the coupling parameter is sufficiently small. In  such a case, the author proves that the energy decays polynomially with explicit polynomial decay rate for sufficiently smooth solutions and these results are extended  to the case of two coupled wave equations with different speeds of propagation under a condition on the ratio of the two speeds and for $n$-dimensional intervals. In \cite{Alabau02}, Alabau et al.,  study the indirect internal stabilization of weakly coupled equations where the damping  is effective in the whole domain.  The authors prove that the  behaviour of the first  equation is sufficient to stabilize the total system and to get a polynomial decay for sufficiently smooth solutions. In  \cite{Alabau06},  Alabau and  L\'eautaud  extend the result of \cite{Alabau02} to a system of two coupled equations with a coupling operator. Under certain assumptions on the coupling operator, they  prove the polynomial stability  the system. Furthermore, we mention \cite{Alabau05}, \cite{ LiuRao01}, \cite{Fu01} and \cite{Soufyane02}  for  indirect stabilization of coupled equations via  zero order terms. 
 \\[0.1in]
Last but not least,  we  recall some  results  concerning the exponential or polynomial indirect stability of systems which arise   from physical problems. For example, 
 we quote \cite{Wehbe08,Alabau01,Fatori02, Guesmia01,RaoLiu03, Wehbe03,Wehbe07,   Wehbe01} for the Bresse system and  \cite{Wehbe05,Alabau03,Soufyane01,Wehbe06,Kim01,Messaoud01,
Santos03} for the Timoshenko system.  The Bresse system is usually considered in the study of elastic structures of the arcs type (see \cite{LagneseLeugering01}) while the  Timoshenko system is usually considered in describing the transverse vibration of a beam and it ignores damping effects of any nature (see \cite{Timoshenko01}).
\\[0.1in]
\noindent The aim of the present paper consists in studying the stability of the  indirectly damped System \eqref{Coupled Equation 1.1}. For this purpose, we write  System \eqref{Coupled Equation 1.1}  as the differential system
\begin{equation*}
U_t=\mathcal{A}U,
\end{equation*}
where 
\begin{equation*}
U=
\begin{pmatrix}
u
           \\ \noalign{\medskip}
 v
           \\ \noalign{\medskip}
 y
           \\ \noalign{\medskip}
 z
 \end{pmatrix}
 \ \ \ \text{and}\ \ \  \mathcal{A} U=
 \begin{pmatrix}
v
           \\ \noalign{\medskip}
 -aAu-A^\gamma v-\alpha z
           \\ \noalign{\medskip}
  z 
           \\ \noalign{\medskip}
  -Ay+\alpha v
\end{pmatrix},
\end{equation*}\\[0.1in]
where $U\in H$ and $\mathcal{A}$ is an unbounded operator on $H$. We use  $\left<\cdot , \cdot\right>$ and $\left\|\cdot\right\|$ to denote the inner produc  and the induced norm respectively on $H$. Since the resolvent of $A$ turns out to be compact in $H$, there exists a non decreasing  sequence $(\mu_n)_{n\geq1}$ tending to infinity and an orthonormal basis $(e_n)_{n\geq1}$ of $H$ such that, for $n\geq 1$,
$Ae_n=\mu^2_n e_n$. We assume the spectrum of $A$ is simple, i.e., the sequence $(\mu_n)_{n\geq1}$ is increasing.
Our goal in this paper is to establish the optimal stability of System \eqref{Coupled Equation 1.1} using the spectral method for the operator $ \mathcal{A}$. For this aim, we study the effect of both the fractional order damping of type $A^{\gamma}$ and the speeds of the two wave equations on this spectrum  and we prove that the latter is made of two branches $(\lambda_{1,n})_{n\in\mathbb{N}}$ and $(\lambda_{2,n})_{n\in\mathbb{N}}$, whose asymptotics, as $n$ tends to infinity, are given next.\\[0.1in]
\noindent \textbf{Case 1.} Assume  that $\gamma=0$. 
If $a=1$, i.e., when the two waves propagate with equal
speed,  we prove that the spectrum of $\mathcal{A}$ has an asymptotic expansion, as $n$ tends to infinity, given by 
\begin{equation*}
\lambda_{1,n}^\pm=\pm\ i\mu_n-\dfrac{1}{4}+\dfrac{1}{4}\sqrt{1-4\alpha^2}+o(1)
\  \ \ \text{ and } \ \  \
\lambda_{2,n}^\pm=\pm\ i\mu_n-\dfrac{1}{4}-\dfrac{1}{4}\sqrt{1-4\alpha^2}+o(1),
\end{equation*}
see Lemma \ref{Coupled Lemma 2.4}). 
Note here that if $4\alpha^2>1$, $\sqrt{1-4\alpha^2}$ actually denotes 
the imaginary number $i\sqrt{4\alpha^2-1}$. We then prove that the energy of  the system is (uniformly) exponentially stable. If $a\neq1$, i.e., when the waves propagate with different  speeds, we show that the spectrum of $\mathcal{A}$ has asymptotic expansion, as $n$ tends to infinity, given by
\begin{equation*}
\lambda_{1,n}^\pm=\pm\ i\sqrt{a}\mu_n-\frac{1}{2}+o(1)
\ \  \ \text{ and }\ \ \ 
\lambda_{2,n}^\pm=\pm i\Big(\mu_n-\frac{\alpha^2}{\left( a-1\right)\mu_n}\Big) -
\frac{\alpha^2}{2\left(a-1\right)^2\mu_n^{2}}+o(1),
\end{equation*}
see Lemma \ref{Coupled Lemma 2.12}).  Thus, the real part corresponding to the first branch of eigenvalues is uniformly bounded and the real part
 corresponding to the second branch of eigenvalues
is of magnitude $\mu_n^{-2}$. Therefore, we prove that the total energy decays at the optimal rate $1/t.$\\[0.1in]
\noindent \textbf{Case 2.} Assume  that  $\gamma<0$. If $a=1$, then the real parts of $\lambda_{1,n}^\pm$ and $\lambda_{2,n}^\pm$ are of magnitude $\mu_n^{2\gamma}$ (see Lemma \ref{Coupled Lemma 2.9}) and the total energy decays at the optimal rate $t^{\frac{1}{\gamma}}$. If $a\neq1$, then the real part corresponding to the first branch of eigenvalues is of magnitude $\mu_n^{2\gamma}$ and the real part corresponding to the second branch of eigenvalues is of magnitude $\mu_n^{2\gamma-2}$ (see Lemma \ref{Coupled Lemma 2.12}), yielding a decay rate of the optimal total energy equal to $t^{-\frac{1}{1-\gamma}}$. \\[0.1in]
\noindent From the above results, we deduce that  the maximum decay rate is achieved when $\gamma$ tends to zero. Therefore, a stronger damping term $A^{\gamma}u_t$ does not necessarily give a better  decay rate of the total energy, as it is expected. A good damping term should transmit the damping from one wave to another before the directly damped wave dies out or loses its energy. This effect of a good damping term  is interpreted by the real parts of the eigenvalues. Consequently, the results of this paper show that a suitable weaker damping term can compensate the lack of feedback on the second equation of System \eqref{Coupled Equation 1.1}. It seems interesting to consider coupled systems of the type \eqref{Coupled Equation 2.1}  with different operators $A_1$, $A_2$. Indeed the same results could be obtained without essential difficulty in the case $A_2 = A^2_1$. But in general we can no longer calculate explicitly the eigenvalues as in Lemmas \ref{Coupled Lemma 2.4}, \ref{Coupled Lemma 2.9}, \ref{Coupled Lemma 2.12}.
 \\[0.1in]

\noindent This paper is organized as follows. In Section \ref{Coupled Section 2}, we set the framework of System \eqref{Coupled Equation 1.1}  and we establish the characteristic equation satisfied by the eigenvalues of the operator $\AA$. Next, in  Section \ref{Coupled Section 3}, relying on the spectrum method,  we prove the exponential stability of System \eqref{Coupled Equation 2.1} when $a=1$ and $\gamma=0$. In Section \ref{Coupled Section 4}, we consider the other cases of $a$ and $\gamma$.  We prove the optimal polynomial energy decay rate of type $t^{-{\delta\left(\gamma\right)}}$ of System \eqref{Coupled Equation 2.1},  where
\begin{equation*}
\delta\left(\gamma\right)=
\left\{
\begin{array}{lll}

-\frac{1}{\gamma},&   \text{if }   a=1\text{ and } \gamma<0, 
          
           \\  \noalign{\medskip}
           
 \frac{1}{1-\gamma}, & \text{if }   a\neq1\text{ and } \gamma\leq0.
 
\end{array}
\right.
\end{equation*}
Finally, in Section \ref{Coupled Section 5}, we examine some  applications for our study.
\section{Characteristic equation and Riesz basis method}\label{Coupled Section 2}
\noindent In this paper, we consider the following abstract system of second order evolution equations
given by 
\begin{equation}\label{Coupled Equation 2.1}
\left\{
\begin{array}{lll}

\utt+aAu+A^{\gamma}\ut+\alpha \yt=0,

          \\ \noalign{\medskip}
\ytt+Ay-\alpha \ut=0,

\end{array}
\right.
\end{equation}
where $a>0,$ $  \gamma\leq0,\ \alpha\in\mathbb{R}^*$, the operator $A$ is a self-adjoint coercive operator with compact resolvent in a separable Hilbert space $H$.
{Let us define the energy space}
\begin{equation*}
\HH=
\Dah
\times 
H
\times
 \Dah
 \times
  H
\end{equation*}
equipped with the following norm
\begin{equation*}
\left\|\left(u,v,y,z\right)\right\|^2_\HH
=
a\left\|A^{\frac{1}{2}}u\right\|^2
+
\left\|A^{\frac{1}{2}}y\right\|^2
+
\left\|v\right\|^2+\left\|z\right\|^2,
\end{equation*}
where  $\|\cdot\|$  denotes   the  norm in $H$. We define the linear unbounded operator $\AA$ in $\HH$ by
\begin{equation*}
\DA
=
\left\{
U=\left(u,v,y,z\right)^\mathsf{T}\in\HH; \ 
  v,z\in \Dah
,\ 
u,y\in \Da
\right\},
\end{equation*}
and 
\begin{equation*}
\AA
\begin{pmatrix}
u
          \\ \noalign{\medskip}
v
          \\ \noalign{\medskip}
y
          \\ \noalign{\medskip}
z
\end{pmatrix}
=
\begin{pmatrix}
v
          \\ \noalign{\medskip}
-aAu-A^\gamma v-\alpha z
          \\ \noalign{\medskip}
z 
          \\ \noalign{\medskip}
-Ay+\alpha v
\end{pmatrix}.
\end{equation*}
Therefore, we can write System \eqref{Coupled Equation 2.1} as an evolution equation
\begin{equation}\label{eq:mathcalA}
\left\{
\begin{array}{lll}

U_t(x,t)=\AA U(x,t),

       \\ \noalign{\medskip}
       
U\left(x,0\right)=u_0(x),

\end{array}
\right. 
\end{equation}
 where $u_0=\left(u_0,v_0,y_0,z_0\right)^{\mathsf{T}}\in \HH.$\\[0.1in]
\noindent One clearly has that $\mathcal{A}$ is a maximal dissipative operator on $\mathcal{H}$ and, thanks to  the Lumer-Phillips theorem (see \cite{LiuZheng01, Pazy01}), we deduce that $\mathcal{A}$ generates a  $C_0$-semigroup of contractions $e^{t\mathcal{A}}$ in $\mathcal{H}$ and therefore \eqref{Coupled Equation 2.1} is well-posed.
Moreover, the energy of System \eqref{Coupled Equation 2.1} is given by
\begin{equation*}
\Et
=
\frac{1}{2}\left(
a\left\|A^{\frac{1}{2}}u\right\|^2
+
\left\|A^{\frac{1}{2}}y\right\|^2
+
\left\|\ut\right\|^2
+
\left\|\yt\right\|^2
\right),
\end{equation*}
where
\begin{equation*}
E^\prime\left(t\right)
=
-\left\|A^{\frac{\gamma}{2}}\ut\right\|^2
\leq0.
\end{equation*}
Hence, the energy of System \eqref{Coupled Equation 2.1} is decaying. Before starting the main results of this work, we introduce here the notions of stability that we encounter in this work.
\begin{defi}\label{Chapter pr-35}
{Assume that $\mathcal{A}$ is the generator of a C$_0$-semigroup of contractions $e^{t\mathcal{A}}$  on a Hilbert space  $\mathcal{H}$. The  $C_0$-semigroup $e^{t\mathcal{A}}$  is said to be
\begin{enumerate}
\item[1.]  Exponentially (or uniformly) stable if there exist two positive constants $M$ and $\epsilon$ such that
\begin{equation}\label{eq:ES}
\|e^{t\mathcal{A}}x_0\|_{\mathcal{H}} \leq Me^{-\epsilon t}\|x_0\|_{\mathcal{H}}, \quad
\forall\  t>0,  \ \forall \ x_0\in {\mathcal{H}}.
\end{equation}
\item[2.] Polynomially stable if there exists two positive constants $C$ and $\alpha$ such that
\begin{equation}\label{eq:PS}
 \|e^{t\mathcal{A}}x_0\|_{\mathcal{H}}\leq C t^{-\alpha}\|\mathcal{A}x_0\|_{\mathcal{H}},  \quad\forall\ 
t>0,  \ \forall \ x_0\in D\left(\mathcal{A}\right).
\end{equation}
In that case, one says that solutions of \eqref{eq:mathcalA} decay at a rate $t^{-\alpha}$.
\noindent The  $C_0$-semigroup $e^{t\mathcal{A}}$  is said to be  polynomially stable with optimal decay rate $t^{-\alpha}$ (with $\alpha>0$) if it is polynomially stable with decay rate $t^{-\alpha}$ and, for any $\varepsilon>0$ small enough, solutions of \eqref{eq:mathcalA} do not decay at a rate $t^{-(\alpha-\varepsilon)}$.
\end{enumerate}}
\xqed{$\square$}
\end{defi}
\noindent Note that, in the definition of polynomially stable, one can replace $\mathcal{A}$ by $\mathcal{A}^{\theta}$ for some positive real number $\theta$ and the constants $C,\alpha$ hence depend on $\theta$.
\begin{defi}
Let $(e^{t\mathcal{A}})_{t\geq0}$  be a $C_{0}$-semigroup of contractions generated by the operator $\mathcal{A}$ on a Hilbert space $\mathcal{H}$. Let  $(\lambda_{k,n})_{1\leq k\leq K,~n\geq 1}$ denotes the kth branch of eigenvalues of $\mathcal{A}$ and $\{ e_{k,n} \}_{1\leq k\leq K,~n\geq 1}$ the system of eigenvectors which forms a Riesz basis in $\mathcal{H}$. Then the fractional power $\mathcal{A}^{\theta}$ of $\mathcal{A}$ with $\theta\in \mathbb{R}$ is defined by
\begin{equation*}
D\left(\mathcal{A}^{\theta}\right)=\left\{  u\in \mathcal{H}:\   \sum_{k=1}^K\sum_{n\geq 1}\left| \lambda_{k,n}^{\theta}\right|^2\left|\left<u,e_{k,n}\right>_\mathcal{H}\right|^2 <\infty    \right\}
\end{equation*}
and for all $u\in D\left(\mathcal{A}^{\theta}\right)$, we have
\begin{equation*}
\mathcal{A}^{\theta}u=\sum_{k=1}^K\sum_{n\geq 1} \lambda_n^{\theta}\left<u,e_{k,n}\right>_\mathcal{H} e_{k,n}.\end{equation*}\xqed{$\square$}
\end{defi}
\noindent Our subsequent findings on exponential stability will rely on the following result from \cite{CurtainZwart01,LuoBao01}, which gives necessary and sufficient conditions for a semigroup to be exponentially stable.
\begin{prop}\label{Chapter pr-43}$\left(\textbf{cf.  \cite{CurtainZwart01,LuoBao01}}\right)$
{Let $\left(\mathcal{A},D\left(\mathcal{A}\right)\right)$  be an unbounded linear operator on $\mathcal{H}$ with compact resolvent. Assume that   $\mathcal{A}$ is the infinitesimal generator of a  $C_0-$semigroup of contractions $(e^{t\mathcal{A}})_{t\geq0}$. Moreover, suppose that the eigenvectors and the root vectors of $\mathcal{A}$ form a Riesz basis in $\mathcal{H}$ and that the multiplicity of the eigenvalues of $\mathcal{A}$ are uniformly bounded.  Then, $(e^{t\mathcal{A}})_{t\geq0}$ is  exponentially stable if and only if its spectral bound $s\left(\mathcal{A}\right)$,} defined as
$$s\left(\mathcal{A}\right)=\sup\left\{\Re\left(\lambda\right):\ \lambda\in \sigma\left(\mathcal{A}\right) \right\},$$ is negative.
\xqed{$\square$}
\end{prop}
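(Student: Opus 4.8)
The plan is to derive the stated equivalence from the coincidence of the growth bound $\omega_0(\mathcal{A})$ of the semigroup with the spectral bound $s(\mathcal{A})$, which is exactly what the Riesz-basis hypothesis buys us (in general only $s(\mathcal{A})\le\omega_0(\mathcal{A})$ holds, and the failure of equality is precisely the pathology that a Riesz basis of generalized eigenvectors excludes). Once this is established, the proposition is immediate, since exponential stability of $(e^{t\mathcal{A}})_{t\ge0}$ is by definition the statement $\omega_0(\mathcal{A})<0$.

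First I would dispose of the easy implication directly. If $\|e^{t\mathcal{A}}x_0\|_{\mathcal{H}}\le Me^{-\epsilon t}\|x_0\|_{\mathcal{H}}$ holds for all $t>0$ and all $x_0$, then applying it to an eigenvector $x\ne 0$ of an eigenvalue $\lambda\in\sigma(\mathcal{A})$ gives $e^{t\mathcal{A}}x=e^{\lambda t}x$, hence $e^{\Re(\lambda)t}\le Me^{-\epsilon t}$ for every $t>0$, which forces $\Re(\lambda)\le-\epsilon$. Since $\mathcal{A}$ has compact resolvent, $\sigma(\mathcal{A})$ consists only of eigenvalues (of finite multiplicity), so $s(\mathcal{A})\le-\epsilon<0$.

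The bulk of the work is the converse. Assuming $s(\mathcal{A})=-\delta<0$, I would group the Riesz basis $\{e_{k,n}\}$ of eigen- and root vectors into the finite-dimensional generalized eigenspaces and denote by $P_j$ the associated spectral projections. The Riesz-basis property together with the uniform bound $m$ on the algebraic multiplicities yields both $\sup_j\|P_j\|<\infty$ and the two-sided estimate $c\|x_0\|_{\mathcal{H}}^2\le\sum_j\|P_jx_0\|_{\mathcal{H}}^2\le C\|x_0\|_{\mathcal{H}}^2$ for all $x_0\in\mathcal{H}$. On $\mathrm{ran}\,P_j$ the operator $\mathcal{A}$ acts as $\lambda_jP_j+N_j$ with $N_j:=(\mathcal{A}-\lambda_jI)P_j$ nilpotent of order at most $m$, and since $P_j$ commutes with the semigroup,
\begin{equation*}
e^{t\mathcal{A}}P_jx_0=P_je^{t\mathcal{A}}x_0=e^{\lambda_jt}\sum_{\ell=0}^{m-1}\frac{t^\ell}{\ell!}\,N_j^\ell P_jx_0 .
\end{equation*}
Fixing $\omega\in(-\delta,0)$ and using $\Re(\lambda_j)\le s(\mathcal{A})=-\delta$, each function $t\mapsto e^{\Re(\lambda_j)t}t^\ell$ is bounded on $[0,+\infty)$ by $C_\ell e^{\omega t}$ with $C_\ell$ independent of $j$; combined with a uniform-in-$j$ bound on $\|N_j^\ell\|$, this gives $\|e^{t\mathcal{A}}P_jx_0\|_{\mathcal{H}}\le Ke^{\omega t}\|P_jx_0\|_{\mathcal{H}}$ with $K$ not depending on $j$. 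Summing in $j$ and applying the norm comparison twice yields $\|e^{t\mathcal{A}}x_0\|_{\mathcal{H}}\le\widetilde{M}e^{\omega t}\|x_0\|_{\mathcal{H}}$ with $\omega<0$, i.e.\ exponential stability.

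The step I expect to be the main obstacle is the uniform boundedness of the nilpotent parts $N_j$ (equivalently, the uniform control of the off-diagonal entries of $\mathcal{A}$ relative to the Riesz basis): I would secure it by choosing the Riesz basis to consist of Jordan chains adapted to $\mathcal{A}$, so that, modulo the fixed bounded invertible map turning the basis into an orthonormal one, each $N_j$ is a direct sum of nilpotent shifts of size at most $m$ and hence has norm bounded independently of $j$. I would also note that in the concrete setting of System \eqref{Coupled Equation 2.1} this difficulty does not arise at all: the two asymptotic branches of eigenvalues are simple for all large $n$, so all but finitely many $N_j$ vanish and the polynomial factors above are absent, leaving a one-line Riesz-basis expansion.
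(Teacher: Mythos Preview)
The paper does not prove this proposition at all: it is stated with a citation to \cite{CurtainZwart01,LuoBao01} and closed immediately with a $\square$, so there is no ``paper's own proof'' to compare against. Your argument is the standard one underlying those references and is correct in outline: the Riesz basis of generalized eigenvectors, together with the uniform bound $m$ on algebraic multiplicities, lets you block-diagonalize the semigroup into uniformly controlled finite-dimensional pieces $e^{\lambda_j t}\sum_{\ell<m}\frac{t^\ell}{\ell!}N_j^\ell$, and the two-sided Riesz estimate then transfers the blockwise bound $Ke^{\omega t}$ to all of $\mathcal{H}$. You correctly flag the only delicate point, the uniform bound on $\|N_j^\ell\|$, and your fix---passing via the bounded invertible $T$ that turns the Jordan-chain Riesz basis into an orthonormal one, so that $TN_jT^{-1}$ becomes a direct sum of standard shifts of size at most $m$ on orthonormal vectors and hence $\|N_j\|\le\|T\|\,\|T^{-1}\|$---is exactly right. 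Your closing remark that in the concrete system \eqref{Coupled Equation 2.1} the large eigenvalues are simple, so the nilpotent parts vanish for all but finitely many $j$, is also accurate and matches how the proposition is actually applied in the proof of Theorem~\ref{Coupled Theorem}.
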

\noindent If the semigroup fails to be exponentially stable, we search for another type of decay rate such polynomial stability. In that case, the following proposition from \cite{RaoLoreti01} provides a useful way to even characterize optimal polynomial stability.
\begin{prop}\label{Chapter pr-45}
	{ $\left(\textbf{cf. Theorem 2.1   in \cite{RaoLoreti01}}\right)$.  Let $(e^{t\mathcal{A}})_{t\geq0}$  be a $C_{0}$-semigroup of contractions generated by the operator $\mathcal{A}$ on a Hilbert space $\mathcal{H}$. Let $(\lambda_{k,n})_{1\leq k\leq K,~n\geq 1}$ denotes the $k$-th branch of eigenvalues of $\mathcal{A}$ and $\{ e_{k,n} \}_{1\leq k\leq K,~n\geq 1}$ the system of eigenvectors which forms a Riesz basis in $\mathcal{H}$. Assume that for each $1\leq k\leq K$ there exist a positive sequence $(\mu_{k,n})_{n\geq 1}$ tending to infinity and two positive constants $\alpha_{k}\geq 0,$ $\beta_{k}>0$ such that
	\begin{equation*}
	\Re (\lambda_{k,n})\leq -\dfrac{\beta_{k}}{\mu_{k,n}^{\alpha_{k}}}\quad\text{and}\quad |\text{Im} (\lambda_{k,n})|\geq \mu_{k,n}\quad \forall n\geq 1.
	\end{equation*}
	Then, for every $\theta >0,$ there exists a constant $M>0$ such that, for every $u_{0}\in D(\mathcal{A}^{\theta})$, one has 
	\begin{equation*}
	\|e^{t\mathcal{A}}u_{0}\|_{\mathcal{H}}\leq \|\mathcal{A}^{\theta}u_{0}\|_{\mathcal{H}}\dfrac{M}{t^{\theta \delta}} \quad \forall t>0,
	\end{equation*}
	where $\delta$ is given by
	\begin{equation}\label{delta}
	\delta := \min_{1\leq k\leq K} \dfrac{1}{\alpha_{k}}=\dfrac{1}{\alpha_{l}}.
	\end{equation}
	Moreover, if there exists two constants $c_{1}>0$, $c_{2}>0$ such that
	\begin{equation*}
	\Re (\lambda_{k,n})\geq -\dfrac{c_{1}}{\mu_{k,n}^{\alpha_{l}}}\quad\text{and}\quad |\text{Im} (\lambda_{k,n})|\leq c_{2}\mu_{k,n}\quad \quad 1\leq k\leq K,\quad \forall n\geq 1,
	\end{equation*}
	then $(e^{t\mathcal{A}})_{t\geq 0}$  is polynomially stable with optimal decay rate $t^{-\delta}$, where $\delta$ is given in \eqref{delta}.}\xqed{$\square$}
\end{prop}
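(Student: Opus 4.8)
The plan is to exploit the Riesz-basis hypothesis to diagonalize the semigroup, so that the whole assertion reduces to one-variable estimates on each eigen-mode; I would organize this in three steps plus a check of optimality. \textbf{Step 1 (diagonalization).} Since $\{e_{k,n}\}$ is a Riesz basis of $\HH$, there are constants $0<c\le C$ with $c\|u_0\|_\HH^2\le\sum_{k,n}|a_{k,n}|^2\le C\|u_0\|_\HH^2$ for every $u_0=\sum_{k,n}a_{k,n}e_{k,n}\in\HH$. Because each $e_{k,n}$ is an eigenvector, $e^{t\AA}e_{k,n}=e^{\lambda_{k,n}t}e_{k,n}$, hence $e^{t\AA}u_0=\sum_{k,n}a_{k,n}e^{\lambda_{k,n}t}e_{k,n}$ and $\|e^{t\AA}u_0\|_\HH^2$ is comparable to $\sum_{k,n}|a_{k,n}|^2e^{2\Re(\lambda_{k,n})t}$; likewise, by the series definition of $\AA^\theta$, for $u_0\in D(\AA^\theta)$ the quantity $\|\AA^\theta u_0\|_\HH^2$ is comparable to $\sum_{k,n}|\lambda_{k,n}|^{2\theta}|a_{k,n}|^2$, which dominates $\sum_{k,n}\mu_{k,n}^{2\theta}|a_{k,n}|^2$ thanks to $|\lambda_{k,n}|\ge|\Im(\lambda_{k,n})|\ge\mu_{k,n}$. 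I would also record that every $\lambda_{k,n}$ has strictly negative real part, so that $0\notin\sigma(\AA)$ and $\AA^{-\theta}$ is bounded.

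\textbf{Step 2 (the decay estimate).} It is enough to produce $M>0$ with
$$\mu_{k,n}^{-2\theta}\,e^{2\Re(\lambda_{k,n})t}\le M^2\,t^{-2\theta\delta}\qquad\text{for all }t>0,\ k,\ n,$$
since multiplying by $\mu_{k,n}^{2\theta}|a_{k,n}|^2$, summing, and invoking the two comparisons of Step 1 yields $\|e^{t\AA}u_0\|_\HH\le M't^{-\theta\delta}\|\AA^\theta u_0\|_\HH$ after absorbing the Riesz constants. To get the displayed inequality I would insert $\Re(\lambda_{k,n})\le-\beta_k\mu_{k,n}^{-\alpha_k}$ (a branch with $\alpha_k=0$ then has a uniform gap and contributes an exponentially decaying term, which is harmless, so assume $\alpha_k>0$) and apply the elementary identity $\sup_{s>0}s^pe^{-s}=(p/e)^p$ with $s=2\beta_kt\mu_{k,n}^{-\alpha_k}$ and $p=2\theta/\alpha_k$, which gives $e^{2\Re(\lambda_{k,n})t}\le(p/e)^p(2\beta_k)^{-p}t^{-p}\mu_{k,n}^{2\theta}$; since $1/\alpha_k\ge\delta$ one has $p\ge2\theta\delta$, hence $t^{-p}\le t^{-2\theta\delta}$ for $t\ge1$, and the maximum of $(p/e)^p(2\beta_k)^{-p}$ over the finitely many branches supplies $M$. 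For $0<t\le1$, where $t^{-\theta\delta}\ge1$, I would instead use contractivity, $\|e^{t\AA}u_0\|_\HH\le\|u_0\|_\HH\le\|\AA^{-\theta}\|\,\|\AA^\theta u_0\|_\HH$, and enlarge $M$ accordingly. Specializing $\theta=1$ then recovers polynomial stability with rate $t^{-\delta}$ in the sense of Definition \ref{Chapter pr-35}.

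\textbf{Step 3 (optimality).} Under the additional two-sided hypotheses I would rule out any faster polynomial rate by testing a would-be bound $\|e^{t\AA}x_0\|_\HH\le C_\varepsilon t^{-(\delta+\varepsilon)}\|\AA x_0\|_\HH$ on the eigenvectors $x_0=e_{l,n}$ of a branch $l$ realizing $1/\alpha_l=\delta$. There $\|e^{t\AA}e_{l,n}\|_\HH=e^{\Re(\lambda_{l,n})t}\|e_{l,n}\|_\HH\ge e^{-c_1t\mu_{l,n}^{-\alpha_l}}\|e_{l,n}\|_\HH$, whereas $\|\AA e_{l,n}\|_\HH=|\lambda_{l,n}|\,\|e_{l,n}\|_\HH\le(c_1\mu_{l,n}^{-\alpha_l}+c_2\mu_{l,n})\|e_{l,n}\|_\HH\le C'\mu_{l,n}\|e_{l,n}\|_\HH$ for $n$ large. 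Choosing $t=t_n:=\mu_{l,n}^{\alpha_l}\to\infty$ freezes the exponential at $e^{-c_1}$, and the putative bound would force $e^{-c_1}\le C_\varepsilon C'\,\mu_{l,n}^{\,1-\alpha_l(\delta+\varepsilon)}=C_\varepsilon C'\,\mu_{l,n}^{-\alpha_l\varepsilon}\to0$, which is absurd. Hence the rate $t^{-\delta}$ cannot be improved, i.e.\ it is optimal.

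\textbf{Expected obstacle.} The analytic core — the inequality for $s^pe^{-s}$ and the freezing $t_n=\mu_{l,n}^{\alpha_l}$ in the optimality step — is routine; the delicate part is bookkeeping: justifying the norm equivalences and the diagonal action of $e^{t\AA}$ over a basis that is merely Riesz rather than orthonormal, handling the regime $0<t\le1$ (which is precisely where $0\notin\sigma(\AA)$ and the boundedness of $\AA^{-\theta}$ are used), and working throughout with $|\lambda_{k,n}|^\theta$, which is unambiguous, rather than a branch-dependent power $\lambda_{k,n}^\theta$. Uniformity of $M$ in the index $k$ is automatic because there are only finitely many branches.
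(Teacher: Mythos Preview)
Your argument is correct. Note, however, that the paper does not supply a proof of this proposition at all: it is quoted verbatim as Theorem~2.1 of Loreti--Rao \cite{RaoLoreti01} and closed with a bare $\square$. What you have written is essentially the proof one finds in that reference: diagonalize via the Riesz basis, reduce to the scalar inequality $\sup_{s>0}s^pe^{-s}=(p/e)^p$ mode by mode, and test optimality on the critical branch with the time scale $t_n=\mu_{l,n}^{\alpha_l}$.

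A couple of minor remarks on bookkeeping, since you flagged this as the delicate part. First, the paper's Definition of $\AA^\theta$ writes $\langle u,e_{k,n}\rangle_\HH$ for the expansion coefficients; for a genuine Riesz (non-orthonormal) basis these should be the biorthogonal coefficients, and your norm equivalence $\|\AA^\theta u_0\|_\HH^2\asymp\sum|\lambda_{k,n}|^{2\theta}|a_{k,n}|^2$ is exactly what makes this harmless. Second, your observation that $\inf_{k,n}|\lambda_{k,n}|>0$ (finitely many branches, each with $|\lambda_{k,n}|\ge\mu_{k,n}>0$ and $\mu_{k,n}\to\infty$) is precisely what guarantees $\AA^{-\theta}$ is bounded and closes the $0<t\le1$ case. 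Third, in the optimality step you are implicitly using that a Riesz basis has $\inf_{k,n}\|e_{k,n}\|_\HH>0$, so that dividing by $\|e_{l,n}\|_\HH$ is legitimate; this is automatic. Finally, the paper's Definition~\ref{Chapter pr-35} of optimality contains what appears to be a sign slip (it rules out $t^{-(\alpha-\varepsilon)}$ rather than $t^{-(\alpha+\varepsilon)}$); your contradiction argument addresses the intended, meaningful direction.
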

\noindent In this work, to check the decay rate, we rely on the Riesz basis 
method in which we first determine the characteristic equation satisfied by the spectrum. Since the resolvent of $A$ is compact in $H$, there exists an increasing sequence $(\mu_n)_{n\geq1}$ tending to infinity and an orthonormal basis $(e_n)_{n\geq1}$ of $H$ such that
\begin{equation}\label{Coupled Equation 2.2}
Ae_n
=
\mu^2_n e_n\quad\forall\ n\geq 1.
\end{equation}
In turn, to study the spectrum of System \eqref{Coupled Equation 2.1}, let $\lambda$ be an eigenvalue of the operator $\AA$ and $U=(u, v, y, z)^{\mathsf{T}}$ a corresponding eigenvector. Therefore, we have
\begin{equation*}
\AA U
=
\lambda U.
\end{equation*}
Equivalently, we have the following system
\begin{equation}\label{Coupled Equation 2.3}
\left\{
\begin{array}{ll}

v=\lambda u,
        \\ \noalign{\medskip} 
-aAu-A^\gamma v-\alpha z=\lambda v,
        \\ \noalign{\medskip}
 z=\lambda y,
         \\  \noalign{\medskip}
  -Ay+\alpha v=\lambda z.
  
\end{array}
\right.
\end{equation}
Similar to the analysis done in \cite{RaoLoreti01}, we will see in Proposition \ref{Coupled Theorem 2.7} and in Proposition  \ref{Coupled Theorem 2.14new} that, for every $n\geq 1$, there exists $\left(B_n,C_n\right)\neq\left(0,0\right)$ such that the eigenvector $U$ of $\mathcal{A}$  is of the form
\begin{equation}\label{Coupled Equation 2.4}
u=B_n e_n,\quad v=\lambda B_n e_n,\quad y=C_n e_n,\quad z=\lambda C_n e_n.
\end{equation}
 Inserting \eqref{Coupled Equation 2.4} in \eqref{Coupled Equation 2.3} and using \eqref{Coupled Equation 2.2}, we obtain
\begin{equation}\label{Coupled Equation 2.5}
\left\{
\begin{array}{ll}

\displaystyle{ \left(a\mu^2_n+\lambda^2+\lambda \mu^{2\gamma}_n\right)B_ne_n +\alpha \lambda C_n e_n=0,}
    \\  \noalign{\medskip}
\displaystyle{-\alpha\lambda B_n e_n  +\left(\mu^2_n+ \lambda^2\right) C_n e_n=0,}

\end{array}
\right.
\end{equation}
which  has a non-trivial solution $\left(B_n,C_n\right)\neq\left(0,0\right)$ if and only if $\lambda$ is a solution of the equation 
\begin{equation}\label{charact eq}
 a\mu^4_n+\lambda\left(\left(a+1\right)\lambda+\mu^{2\gamma}_n\right)\mu^2_n+\lambda^2\left(\lambda^2+\lambda\mu^{2\gamma}_n+\alpha^2\right)=0,
\end{equation}
that we refer to as the characteristic equation associated with the eigenvalue $\mu_n^2$ of $A$. The four roots of this equation are eigenvalues of $\mathcal{A}$
and called the eigenvalues of $\mathcal{A}$ corresponding to $\mu_n$ of $A$. We also have the following result.
\begin{lem}\label{Coupled Lemma 2.1}
{ Let $\lambda_n=\lambda_{j,n}^\pm,\ j=1,2$ be one of the fourth eigenvalues of $\mathcal{A}$ corresponding to $\mu_n$. Then, there exists two positive constants $m,\ M$, such that, for $n$ large enough,   
 \begin{equation}\label{Coupled Equation 2.9}
m\leq\left\vert\frac{\lambda_n}{\mu_n}\right\vert\leq M.
\end{equation}}
 \end{lem}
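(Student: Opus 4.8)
The plan is to treat the characteristic equation \eqref{charact eq} as a perturbation, for large $n$, of an equation whose roots scale like $\mu_n$. Write $\lambda_n = \mu_n \zeta_n$ and substitute into \eqref{charact eq}, then divide by $\mu_n^4$. Using $\mu_n^{2\gamma} = \mu_n^{2\gamma-1}\cdot\mu_n$ and $\gamma \le 0$, so that $\mu_n^{2\gamma-1}\to 0$, one obtains an equation of the form
\begin{equation*}
\zeta_n^4 + (a+1)\zeta_n^2 + a + \alpha^2\zeta_n^2\mu_n^{-2} + \mu_n^{2\gamma-1}\zeta_n(\zeta_n^2+1)= 0.
\end{equation*}
As $n\to\infty$ the last two terms vanish (once we know $\zeta_n$ stays bounded), and the limiting equation is $\zeta^4 + (a+1)\zeta^2 + a = (\zeta^2+1)(\zeta^2+a) = 0$, whose four roots are $\pm i$ and $\pm i\sqrt a$, all of modulus either $1$ or $\sqrt a$, hence bounded away from $0$ and $\infty$. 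This makes \eqref{Coupled Equation 2.9} plausible with, say, any $m < \min(1,\sqrt a)$ and $M > \max(1,\sqrt a)$.

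To turn this into a proof I would argue by contradiction (or directly via a compactness/continuity argument). First, the upper bound: if $|\zeta_n| = |\lambda_n/\mu_n|$ were unbounded along a subsequence, divide \eqref{charact eq} by $\lambda_n^4$ instead; the dominant term is $\lambda_n^4$ and every other term is $o(|\lambda_n|^4)$ (here one uses $\mu_n^{2\gamma}\le \mu_1^{2\gamma}$ is bounded since $\gamma\le 0$, and $\mu_n^2 \lesssim |\lambda_n|^2$ along that subsequence), giving $1 + o(1) = 0$, a contradiction. Hence $|\lambda_n/\mu_n| \le M$ for large $n$. Second, the lower bound: suppose $\lambda_n/\mu_n \to 0$ along a subsequence. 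Divide \eqref{charact eq} by $\mu_n^4$; the terms $\lambda^2(\lambda^2+\lambda\mu_n^{2\gamma}+\alpha^2)/\mu_n^4$ and $\lambda(a+1)\lambda\mu_n^2/\mu_n^4$ and $\lambda\mu_n^{2\gamma}\mu_n^2/\mu_n^4 = (\lambda/\mu_n)\mu_n^{2\gamma-1}$ all tend to $0$, leaving $a + o(1) = 0$, contradicting $a>0$. Therefore $|\lambda_n/\mu_n| \ge m$ for large $n$ as well.

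The one point requiring a little care — and the main obstacle — is justifying the smallness of the mixed term $\lambda\mu_n^{2\gamma}$ relative to the leading terms in each of the two divisions, since a priori we do not yet control the size of $\lambda_n$. The clean way is to establish the upper bound \emph{first}: once $|\lambda_n|\le M\mu_n$ is known, then $|\lambda_n\mu_n^{2\gamma}|\le M\mu_n^{2\gamma+1}$, and after dividing \eqref{charact eq} by $\mu_n^4$ the contribution of every term containing a factor $\mu_n^{2\gamma}$ is $O(\mu_n^{2\gamma-1}) = o(1)$ because $2\gamma - 1 < 0$; similarly the genuinely coupled term $\alpha^2\lambda^2/\mu_n^4 = O(\mu_n^{-2})$. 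With these estimates in hand the lower bound follows as above. Finally, since there are exactly four roots for each $n$ and the limiting set $\{\pm i, \pm i\sqrt a\}$ avoids $0$ and $\infty$, the bounds hold simultaneously for all four branches $\lambda_{j,n}^\pm$, which is the statement of the lemma.
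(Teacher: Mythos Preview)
Your proof is correct and follows essentially the same route as the paper: both substitute $Z_n=\lambda_n/\mu_n$ into the characteristic equation, obtain a monic quartic in $Z_n$ whose coefficients converge to those of $g(Z)=Z^4+(a+1)Z^2+a=(Z^2+1)(Z^2+a)$, and conclude from the fact that the four roots of $g$ are nonzero. The only difference is cosmetic: the paper invokes in one line the continuity of polynomial roots with respect to coefficients, whereas you unpack this via two short contradiction arguments (dividing by $\lambda_n^4$ for the upper bound, by $\mu_n^4$ for the lower bound).
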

\begin{proof} Set $Z_n=\frac{\lambda_n}{\mu_n}$. Then, from \eqref{charact eq}, one has that $Z_n$ is one of the four roots of the polynomial $f_n$ of degree four given by 
$$
f_n(Z)=Z^4+\mu_n^{2\gamma-1}Z^3+(a+1+\frac{\alpha^2}{\mu_n^2})Z^2+\mu_n^{2\gamma-1}Z+a.
$$
Let $g$ be the the polynomial of degree four given by 
$g(Z)=Z^4+(a+1)Z^2+a$, which has exactly four non zero roots. Since the coefficients of $f_n$ converge to those of $g$ as $n$ tends to infinity, one gets the result.

 \end{proof}	
\section{Exponential stability}\label{Coupled Section 3}
\noindent In this Section,  we consider the case where $a=1$ and $\gamma=0$.
Our main result is the following theorem.
\begin{thm}\label{Coupled Theorem}
If  $a=1$ and $\gamma=0$, then System \eqref{Coupled Equation 2.1} is exponentially stable.
\end{thm}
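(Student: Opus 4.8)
The strategy is to verify the hypotheses of Proposition \ref{Chapter pr-43}: since $\AA$ is the generator of a $C_0$-semigroup of contractions with compact resolvent, it suffices to establish two facts — first, that the eigenvectors (and root vectors) of $\AA$ form a Riesz basis of $\HH$ with uniformly bounded multiplicities, and second, that the spectral bound $s(\AA)$ is strictly negative. The second point is where the asymptotic analysis announced in the introduction (Case 1, $a=1$, $\gamma=0$) comes in: by Lemma \ref{Coupled Lemma 2.4}, the spectrum splits into the two branches $\lambda_{1,n}^\pm = \pm i\mu_n - \frac14 + \frac14\sqrt{1-4\alpha^2} + o(1)$ and $\lambda_{2,n}^\pm = \pm i\mu_n - \frac14 - \frac14\sqrt{1-4\alpha^2} + o(1)$. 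I would first observe that $\Re(\lambda_{2,n}^\pm)\to -\frac14-\frac14\Re\sqrt{1-4\alpha^2}<0$ always, and $\Re(\lambda_{1,n}^\pm)\to -\frac14+\frac14\Re\sqrt{1-4\alpha^2}$; when $4\alpha^2\ge 1$ the square root is purely imaginary so both limits equal $-\frac14<0$, and when $4\alpha^2<1$ one has $0<\sqrt{1-4\alpha^2}<1$, so $-\frac14+\frac14\sqrt{1-4\alpha^2}<0$ as well. Hence in all cases $\limsup_n \Re(\lambda_{j,n}^\pm)<0$, so only finitely many eigenvalues can have real part close to $0$.

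To upgrade this to $s(\AA)<0$ I would combine the asymptotic statement with the dissipativity of $\AA$: since $\AA$ is dissipative, $\sigma(\AA)\subset\{\Re\lambda\le 0\}$, and because the imaginary axis intersected with the resolvent set is handled by noting $\AA$ has compact resolvent and $0\in\rho(\AA)$ (as $\AA$ is invertible — coercivity of $A$), no eigenvalue lies on $i\rr$. Indeed, if $\lambda = i\beta$ were an eigenvalue, then $E'(t)=-\|A^{\gamma/2}u_t\|^2=0$ along the corresponding solution forces $u_t\equiv 0$ on the support of $A^{\gamma/2}$, i.e. $u\equiv 0$ since $\gamma=0$ means the damping is $\|u_t\|^2$; feeding $u\equiv0$, $v\equiv0$ back into \eqref{Coupled Equation 2.3} gives $\alpha z = 0$ hence $z=0$ and then $Ay=0$ so $y=0$, contradicting $U\ne 0$. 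Therefore $\sigma(\AA)\cap i\rr=\emptyset$. Combined with $\limsup_n\Re(\lambda_{j,n}^\pm)<0$ and the fact that only finitely many eigenvalues lie in any strip $-\epsilon\le\Re\lambda\le 0$, the supremum defining $s(\AA)$ is attained (or approached) away from $i\rr$, so $s(\AA)<0$.

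The Riesz basis property is the part I expect to be the main obstacle, and it is the technical heart of the argument — presumably established in Section \ref{Coupled Section 2} (Propositions \ref{Coupled Theorem 2.7}, \ref{Coupled Theorem 2.14new}) by the standard route: the eigenvectors of $\AA$ are exactly $(B_n e_n, \lambda B_n e_n, C_n e_n, \lambda C_n e_n)^\mathsf{T}$ with $(B_n,C_n)$ a null vector of the $2\times2$ system \eqref{Coupled Equation 2.5}, so one decomposes $\HH$ along the orthogonal spectral subspaces $H_n=\mathrm{span}\{e_n\}$ of $A$ and checks that on each finite-dimensional block $\AA$ acts as an explicit $4\times4$ (or, using the structure, effectively $2\times2$ second-order) matrix whose eigenvectors, suitably normalized, are uniformly close to an orthonormal family; a classical perturbation / Bari-type criterion then yields a global Riesz basis, and the uniform boundedness of multiplicities is immediate since each block contributes at most four eigenvalues. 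With the Riesz basis in hand and $s(\AA)<0$ established, Proposition \ref{Chapter pr-43} gives exponential stability of $(e^{t\AA})_{t\ge 0}$, which is the assertion of Theorem \ref{Coupled Theorem}. $\square$
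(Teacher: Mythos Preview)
Your proposal is correct and follows essentially the same route as the paper: invoke Lemma~\ref{Coupled Lemma 2.4} for the eigenvalue asymptotics, Proposition~\ref{Coupled Theorem 2.7} for the Riesz basis of eigenvectors and root vectors, and conclude via Proposition~\ref{Chapter pr-43}. Your explicit argument that $\sigma(\AA)\cap i\mathbb{R}=\emptyset$ (via dissipativity and $u_t\equiv 0 \Rightarrow U=0$) is a detail the paper leaves implicit but is indeed needed to pass from the asymptotic bound on the large eigenvalues to $s(\AA)<0$.
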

\noindent For the proof of Theorem \ref{Coupled Theorem}, we first need  to study the asymptotic behaviour of the spectrum of $\mathcal{A}$ and, in that direction, we have the following Lemma.
\begin{lem}\label{Coupled Lemma 2.4}
{Assume that $a=1$ and $\gamma=0$. Then, for  $n\geq 1$ large enough, the four eigenvalues of $\mathcal{A}$ corresponding to the eigenvalue $\mu_n^2$ of $A$ and denoted $\lambda_{1,n}^{\pm},\ \lambda_{2,n}^{\pm}$,  satisfy the following asymptotic expansions\\[0.1in]
\noindent\textbf{Case 1.} If $0<\alpha^2\leq \frac{1}{4}$, then 
\begin{equation}\label{Coupled Equation 2.17}
\left\{\begin{array}{ll}

\displaystyle{\lambda_{1,n}^\pm=\pm\ i\mu_n-\dfrac{1}{4}+\dfrac{1}{4}\sqrt{1-4\alpha^2}+O\left(\frac{1}{\mu_n}\right),}

         \\ \noalign{\medskip} 

\displaystyle{\lambda_{2,n}^{\pm}=\pm\ i\mu_n-\dfrac{1}{4}-\dfrac{1}{4}\sqrt{1-4\alpha^2}+O\left(\frac{1}{\mu_n}\right)}.

\end{array}
\right.
\end{equation}
\noindent\textbf{Case 2.} If $\alpha^2>\frac{1}{4}$, then 
\begin{equation}\label{Coupled Equation 2.17new}
\left\{\begin{array}{ll}

\displaystyle{\lambda_{1,n}^\pm=\pm\ i\mu_n-\dfrac{1}{4}+\dfrac{i}{4}\sqrt{4\alpha^2-1}+O\left(\frac{1}{\mu_n}\right),}

         \\ \noalign{\medskip} 

\displaystyle{\lambda_{2,n}^{\pm}=\pm\ i\mu_n-\dfrac{1}{4}-\dfrac{i}{4}\sqrt{4\alpha^2-1}+O\left(\frac{1}{\mu_n}\right)}.

\end{array}
\right.
\end{equation}}
\end{lem}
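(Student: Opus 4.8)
With $a=1$ and $\gamma=0$ one has $\mu_n^{2\gamma}=1$, so the characteristic equation \eqref{charact eq} becomes
\begin{equation*}
\mu_n^4+\lambda(2\lambda+1)\mu_n^2+\lambda^2(\lambda^2+\lambda+\alpha^2)=0.
\end{equation*}
By Lemma \ref{Coupled Lemma 2.1}, every eigenvalue $\lambda_n$ attached to $\mu_n$ keeps the ratio $\lambda_n/\mu_n$ in a fixed annulus, which makes it natural to rescale by $Z=\lambda/\mu_n$: dividing by $\mu_n^4$ turns the equation into $f_n(Z)=0$ with
\begin{equation*}
f_n(Z)=(Z^2+1)^2+\mu_n^{-1}Z(Z^2+1)+\alpha^2\mu_n^{-2}Z^2.
\end{equation*}
The coefficients of $f_n$ converge to those of $g(Z)=(Z^2+1)^2$, which has a double zero at $i$ and a double zero at $-i$. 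Together with Lemma \ref{Coupled Lemma 2.1} (which prevents roots from drifting to $0$ or $\infty$) and the continuity of roots in the coefficients --- or Hurwitz's theorem --- this shows that, for $n$ large, $f_n$ has exactly two roots clustering at $i$ and exactly two at $-i$, and these four are all of them. Since the characteristic polynomial in $\lambda$ has real coefficients, the two roots near $-i\mu_n$ are the complex conjugates of the two near $i\mu_n$, so it suffices to locate the latter.

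\textbf{The two roots near $i$.} Writing $Z=i+u$ with $u\to0$ and using $Z^2+1=u(2i+u)$, the equation $f_n(i+u)=0$ reads
\begin{equation*}
u^2(2i+u)^2+\mu_n^{-1}(i+u)u(2i+u)+\alpha^2\mu_n^{-2}(i+u)^2=0.
\end{equation*}
A dominant-balance inspection (the three terms have sizes $u^2$, $\mu_n^{-1}u$ and $\mu_n^{-2}$) forces a root tending to $0$ to have $u$ of order exactly $\mu_n^{-1}$; substituting $u=s/\mu_n$, multiplying by $\mu_n^2$ and expanding in powers of $\mu_n^{-1}$ yields
\begin{equation*}
-(4s^2+2s+\alpha^2)+\mu_n^{-1}q_1(s)+\mu_n^{-2}q_2(s)=0,
\end{equation*}
with $q_1(s)=is(4s^2+3s+2\alpha^2)$ and $q_2(s)=s^2(s^2+s+\alpha^2)$. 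The limiting equation $4s^2+2s+\alpha^2=0$ has the roots $s_\pm=\tfrac14\bigl(-1\pm\sqrt{1-4\alpha^2}\bigr)$, which are exactly the constants in the statement: reading $\sqrt{1-4\alpha^2}$ as the real square root for $0<\alpha^2\le\tfrac14$ gives \eqref{Coupled Equation 2.17}, and as $i\sqrt{4\alpha^2-1}$ for $\alpha^2>\tfrac14$ gives \eqref{Coupled Equation 2.17new} --- the two cases are one computation read with the two branches. As $\lambda=\mu_n Z=i\mu_n+s$, this locates the two eigenvalues near $i\mu_n$ up to $o(1)$.

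\textbf{The $O(1/\mu_n)$ remainder and conclusion.} When $\alpha^2\ne\tfrac14$ the roots $s_\pm$ are simple, so the derivative of $s\mapsto 4s^2+2s+\alpha^2$ is nonzero at $s_\pm$, and the implicit function theorem applied to $F(s,\varepsilon):=-(4s^2+2s+\alpha^2)+\varepsilon q_1(s)+\varepsilon^2 q_2(s)$ near $(s_\pm,0)$ produces, for $n$ large, analytic branches $s=s_\pm+O(\mu_n^{-1})$, hence $\lambda=i\mu_n+s_\pm+O(\mu_n^{-1})$ as claimed; conjugation then gives the two eigenvalues near $-i\mu_n$, and labelling the four as $\lambda_{1,n}^\pm,\lambda_{2,n}^\pm$ as in the statement finishes the proof. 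The genuinely delicate point is the boundary value $\alpha^2=\tfrac14$, where $s_0=-\tfrac14$ is a double root of $4s^2+2s+\alpha^2$ and the implicit function theorem is unavailable; here one uses instead a direct a priori bound, exploiting that $q_1$ vanishes at $s_0$ when $\alpha^2=\tfrac14$ while $q_2(s_0)\ne0$, to still get $s-s_0=O(\mu_n^{-1})$, so that the claimed $O(1/\mu_n)$ remainder survives. I expect that degenerate case --- together with making all the $O(\cdot)$'s uniform in $n$ --- to be the only real obstacle; everything else is routine perturbation of polynomial roots. (An alternative route avoiding the rescaling is to factor the quartic as $(\lambda^2+b_1\lambda+c_1)(\lambda^2+b_2\lambda+c_2)$ and determine the coefficients by perturbation, with $b_1,b_2$ approaching the roots of $t^2-t+\alpha^2=0$ and $c_j=\mu_n^2+O(1)$; it leads to the same expansions.)
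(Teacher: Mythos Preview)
Your proof is correct, but the paper takes a more direct route that you nearly stumble upon in your closing parenthetical. The key observation you miss is that the factorization $(\lambda^2+b_1\lambda+c_1)(\lambda^2+b_2\lambda+c_2)$ is \emph{exact}, not merely asymptotic: viewing the quartic as a quadratic in $\mu_n^2$, its discriminant is $(2\lambda^2+\lambda)^2-4\lambda^2(\lambda^2+\lambda+\alpha^2)=\lambda^2(1-4\alpha^2)$, a perfect square, so the quartic splits exactly as
\[
\Bigl(\lambda^2+\tfrac{1-\sqrt{1-4\alpha^2}}{2}\,\lambda+\mu_n^2\Bigr)\Bigl(\lambda^2+\tfrac{1+\sqrt{1-4\alpha^2}}{2}\,\lambda+\mu_n^2\Bigr)=0,
\]
with $c_1=c_2=\mu_n^2$ on the nose and $b_1,b_2$ exactly the roots of $t^2-t+\alpha^2=0$. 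The paper then just applies the quadratic formula to each factor and expands $\sqrt{16\mu_n^2-b_j^2}=4\mu_n+O(\mu_n^{-1})$.

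What this buys is uniformity: the boundary case $\alpha^2=\tfrac14$ costs the paper nothing, since the two quadratic factors merely coincide and the $O(\mu_n^{-1})$ remainder comes solely from the square-root expansion, whereas in your approach the double root $s_0=-\tfrac14$ forces a separate argument (which you sketch correctly but leave somewhat informal). Your perturbation method is more robust --- it would survive if the exact factorization were unavailable --- but here the algebra happens to be kind, and the paper exploits that.
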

\begin{proof}
We divide the proof into two cases.\\[0.1in]
\noindent\textbf{Case 1.} If $0<\alpha^2\leq \frac{1}{4}$, from \eqref{charact eq}, we get that 
\begin{equation}\label{Coupled Equation 2.15}
\lambda^2_{1,n}+\frac{1-\sqrt{1-4\alpha^2}}{2}\lambda_{1,n}+\mu_n^2=0
\ \ \ \text{and}\ \ \ 
\lambda^2_{2,n}+\frac{1+\sqrt{1-4\alpha^2}}{2}\lambda_{2,n}+\mu_n^2=0.
\end{equation}
For $n$ large, solving equation \eqref{Coupled Equation 2.15}, we obtain
\begin{equation}\label{Coupled Equation 2.22}
\left\{\begin{array}{ll}

\displaystyle{\lambda_{1,n}^\pm=\dfrac{1}{4}\left(-1+\sqrt{1-4\alpha^2}\right)\pm\frac{i}{4}\sqrt{16\mu_n^2-\left(-1+\sqrt{1-4\alpha^2}\right)^2}},

                \\ \noalign{\medskip} 

\displaystyle{\lambda_{2,n}^\pm=\dfrac{1}{4}\left(-1-\sqrt{1-4\alpha^2}\right)\pm\frac{i}{4}\sqrt{16\mu_n^2-\left(-1-\sqrt{1-4\alpha^2}\right)^2}}.

\end{array}
\right. 
\end{equation}
Moreover, we have
\begin{equation}\label{Coupled Equation 2.23}
\frac{i}{4}\left[16\mu^2_n-\left(-1\pm\sqrt{1-4\alpha^2}\right)^2\right]^{\frac{1}{2}}
=
   \pm i   \mu_n\left[1+O\left(\frac{1}{\mu_n^2}\right)\right]^{\frac{1}{2}}
=
  \pm i   \mu_n+O\left(\frac{1}{\mu_n}\right).
\end{equation}
Substituting \eqref{Coupled Equation 2.23} in \eqref{Coupled Equation 2.22}, we obtain \eqref{Coupled Equation 2.17}. \\[0.1in]
\textbf{Case 2.} If $\alpha^2> \frac{1}{4}$, from \eqref{charact eq}, we obtain
\begin{equation}\label{Coupled Equation 2.15-new2}
\lambda^2_{1,n}+\frac{1-i\sqrt{4\alpha^2-1}}{2}\lambda_{1,n}+\mu_n^2=0
\ \ \ \text{and}\ \ \ 
\lambda^2_{2,n}+\frac{1+i\sqrt{4\alpha^2-1}}{2}\lambda_{2,n}+\mu_n^2=0.
\end{equation}
then for $n$ large, solving equation \eqref{Coupled Equation 2.15-new2}, we obtain
\begin{equation}\label{Coupled Equation 2.22newone}
\begin{array}{ll}

\displaystyle{\lambda_{1,n}^\pm=\frac{1}{4}\left(-1\mp\sqrt{2
{{\sqrt{16\mu^4_n+4(2\alpha^2-1)\mu^2_n +\alpha^4}-8\mu^2_n-2\alpha^2+1}}}\right)  }\\ \\ \hspace{4cm} \displaystyle{+\frac{i}{4}\left(\sqrt{4\alpha^2-1}\pm\sqrt{2
{{\sqrt{16\mu^4_n+4(2\alpha^2-1)\mu^2_n +\alpha^4}+8\mu^2_n+2\alpha^2-1}}}\right)}

\end{array} 
\end{equation}
and
\begin{equation}\label{Coupled Equation 2.22newtwo}
\begin{array}{ll}

\displaystyle{\lambda_{2,n}^\pm=\frac{1}{4}\left(-1\pm\sqrt{2
{{\sqrt{16\mu^4_n+4(2\alpha^2-1)\mu^2_n +\alpha^4}-8\mu^2_n-2\alpha^2+1}}}\right)  }\\ \\ \hspace{4cm} \displaystyle{+\frac{i}{4}\left(-\sqrt{4\alpha^2-1}\pm\sqrt{2
{{\sqrt{16\mu^4_n+4(2\alpha^2-1)\mu^2_n +\alpha^4}+8\mu^2_n+2\alpha^2-1}}}\right)},

\end{array} 
\end{equation}
since
\begin{equation}\label{Coupled Equation 2.23new}
\left\{\begin{array}{ll}
\displaystyle{
\sqrt{2\sqrt{16\mu^4_n+4(2\alpha^2-1)\mu^2_n +\alpha^4}-8\mu^2_n-2\alpha^2+1}=O\left(\frac{1}{\mu_n}\right),}\\ \\

\displaystyle{\sqrt{2\sqrt{16\mu^4_n+4(2\alpha^2-1)\mu^2 +\alpha^4}+8\mu^2_n+2\alpha^2-1}=4\mu_n+O\left(\frac{1}{\mu_n}\right),}
\end{array} \right.
\end{equation}
then inserting \eqref{Coupled Equation 2.23new} in \eqref{Coupled Equation 2.22newone} and \eqref{Coupled Equation 2.22newtwo}, we get \eqref{Coupled Equation 2.17new}. Thus, the proof is  complete.
\end{proof}
\noindent We next provide the form of the eigenvectors and root vectors of $\mathcal{A}$. We start with the  following Lemma.
 \begin{lem}\label{eigenvectors}
{ If $a=1$ and $\gamma=0$, then the eigenvectors of $\mathcal{A}$ take the following form.\\[0.1in]
\noindent\textbf{Case 1.} If $0<\alpha^2\leq \frac{1}{4}$, then we have
\begin{equation}\label{Coupled Equation 2.14}
\begin{array}{ll}
e_{1,n}^{\pm}=B_{1,n}^{\pm}\left(\dfrac{e_n}{\lambda_{1,n}^{\pm}}, e_n ,  \dfrac{\beta_1 e_n}{\lambda^\pm_{1,n}}, \displaystyle{\beta_1 e_n} \right)^{\top},\quad
   
e_{2,n}^{\pm}=C_{2,n}^{\pm}\left(\dfrac{\delta_1}{\lambda_{2,n}^{\pm}}e_n, \delta_1 e_n, \dfrac{e_n}{\lambda_{2,n}^{\pm}}, e_n    \right)^{\top},
\end{array}
\end{equation}
where $B_{1,n}^{\pm},C_{2,n}^{\pm}\in\mathbb{C}$ and $\beta_1=\dfrac{2\alpha }{-1+\sqrt{1-4\alpha^2} },\ \delta_1=-\dfrac{1+\sqrt{1-4\alpha^2}}{2\alpha}$.\\[0.1in]
\textbf{Case 2.} If $\alpha^2> \frac{1}{4}$, then we have
\begin{equation}\label{Coupled Equation 2.14new2}
\begin{array}{ll}
e_{1,n}^{\pm}=B_{1,n}^{\pm}\left(\dfrac{e_n}{\lambda_{1,n}^{\pm}}, e_n ,  \dfrac{\beta_3 e_n}{\lambda^\pm_{1,n}}, \displaystyle{\beta_3 e_n} \right)^{\top},\quad
   
e_{2,n}^{\pm}=C_{2,n}^{\pm}\left(\dfrac{\delta_3}{\lambda_{2,n}^{\pm}}e_n, \delta_3 e_n, \dfrac{e_n}{\lambda_{2,n}^{\pm}}, e_n    \right)^{\top},
\end{array}
\end{equation}
where $B_{1,n}^{\pm},C_{2,n}^{\pm}\in\mathbb{C}$ and $\beta_3=\dfrac{2\alpha }{-1+i\sqrt{4\alpha^2-1} },\ \delta_3=-\dfrac{1+i\sqrt{4\alpha^2-1}}{2\alpha}$.
}
 \end{lem}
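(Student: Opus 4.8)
The plan is to reduce everything to the scalar $2\times2$ system \eqref{Coupled Equation 2.5} and then read off the ratio $C_n/B_n$ on each of the two branches. As recalled in the discussion preceding \eqref{Coupled Equation 2.4} (and established in Propositions~\ref{Coupled Theorem 2.7} and~\ref{Coupled Theorem 2.14new}), any eigenvector $U=(u,v,y,z)^{\top}$ of $\mathcal{A}$ attached to $\mu_n$ is supported on the single mode $e_n$, so that $u=B_ne_n$, $v=\lambda B_ne_n$, $y=C_ne_n$, $z=\lambda C_ne_n$ with $(B_n,C_n)\neq(0,0)$, and $(B_n,C_n)$ solves \eqref{Coupled Equation 2.5}. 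With $a=1$ and $\gamma=0$, the coefficient matrix of \eqref{Coupled Equation 2.5} is $\begin{pmatrix}\mu_n^2+\lambda^2+\lambda & \alpha\lambda\\ -\alpha\lambda & \mu_n^2+\lambda^2\end{pmatrix}$, whose determinant is exactly the left-hand side of \eqref{charact eq}, hence vanishes since $\lambda=\lambda_{j,n}^{\pm}$ is a root. Moreover, by the factorizations \eqref{Coupled Equation 2.15} (Case~1) and \eqref{Coupled Equation 2.15-new2} (Case~2), on each branch $\mu_n^2+\lambda^2=-\tfrac{1\mp\sqrt{1-4\alpha^2}}{2}\lambda$ (resp. $-\tfrac{1\mp i\sqrt{4\alpha^2-1}}{2}\lambda$), which is nonzero because $\lambda\neq0$ and the displayed coefficient is nonzero (its real part equals $\tfrac12$ in Case~2). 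Thus the matrix has rank exactly one, its kernel is one-dimensional, and the second row of \eqref{Coupled Equation 2.5} gives $C_n=\dfrac{\alpha\lambda}{\mu_n^2+\lambda^2}\,B_n$.

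For Case~1, I would substitute $\mu_n^2+\lambda^2=-\tfrac{1-\sqrt{1-4\alpha^2}}{2}\lambda$ when $\lambda=\lambda_{1,n}^{\pm}$ to get $C_n=\beta_1B_n$ with $\beta_1=\dfrac{2\alpha}{-1+\sqrt{1-4\alpha^2}}$, and $\mu_n^2+\lambda^2=-\tfrac{1+\sqrt{1-4\alpha^2}}{2}\lambda$ when $\lambda=\lambda_{2,n}^{\pm}$ to get $B_n=\delta_1C_n$ with $\delta_1=-\dfrac{1+\sqrt{1-4\alpha^2}}{2\alpha}$. Writing $B_{1,n}^{\pm}=\lambda_{1,n}^{\pm}B_n$ and $C_{2,n}^{\pm}=\lambda_{2,n}^{\pm}C_n$ then puts the eigenvectors in the form \eqref{Coupled Equation 2.14}. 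Case~2 is the verbatim repetition of this computation with $\sqrt{1-4\alpha^2}$ replaced by $i\sqrt{4\alpha^2-1}$, producing $\beta_3=\dfrac{2\alpha}{-1+i\sqrt{4\alpha^2-1}}$ and $\delta_3=-\dfrac{1+i\sqrt{4\alpha^2-1}}{2\alpha}$ and hence \eqref{Coupled Equation 2.14new2}.

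There is no genuine analytic difficulty in this lemma: once the single-mode structure of the eigenvectors is granted, the whole content is the rank-one bookkeeping of a $2\times2$ system together with the algebraic identity making its determinant coincide with the characteristic polynomial. The only point deserving a word of care is the nonvanishing of $\mu_n^2+\lambda^2$ on each branch, which is what licenses solving for $C_n/B_n$; this is immediate from the factored form of the characteristic equation and the standing hypothesis $\alpha\in\mathbb{R}^{*}$.
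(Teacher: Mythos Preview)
Your proof is correct and follows essentially the same route as the paper: both arguments solve the rank-one $2\times2$ system \eqref{Coupled Equation 2.5} for the ratio $C_n/B_n$ and then invoke the factored quadratics \eqref{Coupled Equation 2.15}, \eqref{Coupled Equation 2.15-new2} to evaluate $\mu_n^2+\lambda^2$ on each branch, arriving at the constants $\beta_j,\delta_j$. The only cosmetic slip is that the real part of the Case~2 coefficient $-\tfrac{1\mp i\sqrt{4\alpha^2-1}}{2}$ is $-\tfrac12$ rather than $\tfrac12$, which of course does not affect the nonvanishing conclusion.
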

\begin{proof}  
Let $\lambda_{1,n}^{\pm},\  \lambda_{2,n}^{\pm}$ be the solutions of \eqref{Coupled Equation 2.15}. Setting 
\begin{equation*}
B_{1,n}=\frac{B_{1,n}^\pm}{\lambda_{1,n}^\pm}\  
   \ \ \ \ \text{and} \ \ \   
 C_{2,n}=\frac{C_{2,n}^\pm}{\lambda_{2,n}^\pm}
\end{equation*}
 in \eqref{Coupled Equation 2.5}, we get
 \begin{equation*}
   C_{1,n}=\frac{\alpha}{\left(\lambda_{1,n}^{\pm}\right)^2+\mu^2_n } B_{1,n}^{\pm}
   \ \ \ \ \text{and} \ \ \   
 B_{2,n} =\frac{\left(\lambda_{2,n}^{\pm}\right)^2+\mu^2_n}{\alpha\left(\lambda_{2,n}^{\pm}\right)^2} C_{2,n}^{\pm}.
\end{equation*}
Therefore, from  \eqref{Coupled Equation 2.4}, we obtain
\begin{equation}\label{Coupled Equation 2.8}
\begin{array}{ll}
e_{1,n}^{\pm}=B_{1,n}^{\pm}\left(\dfrac{e_n}{\lambda_{1,n}^{\pm}}, e_n ,  \dfrac{\alpha e_n}{\left(\lambda_{1,n}^{\pm}\right)^2+\mu^2_n },  \dfrac{\alpha\lambda_{1,n}^{\pm}e_n}{\left(\lambda_{1,n}^{\pm}\right)^2+\mu^2_n }  \right)^{\top},

   \\ \\
   
e_{2,n}^{\pm}=C_{2,n}^{\pm}\left(\dfrac{\left(\lambda_{2,n}^{\pm}\right)^2+\mu^2_n}{\alpha\left(\lambda_{2,n}^{\pm}\right)^2}e_n, \dfrac{\left(\lambda_{2,n}^{\pm}\right)^2+\mu^2_n}{\alpha\lambda_{2,n}^{\pm}}e_n, \dfrac{e_n}{\lambda_{2,n}^{\pm}}, e_n    \right)^{\top},
\end{array}
\end{equation}
are the eigenvectors  corresponding  to the four eigenvalues $\lambda_{1,n}^{\pm},\  \lambda_{2,n}^{\pm}$, where  $B_{1,n}^{\pm},\  C_{2,n}^{\pm}\in\mathbb{C}$. We next divide the argument into two cases.\\[0.1in]
 \noindent\textbf{Case 1.} If $0<\alpha^2\leq \frac{1}{4}$, then from \eqref{Coupled Equation 2.15}, we get
\begin{equation}\label{Coupled Equation 2.16}
\frac{1}{\left(\lambda^\pm_{1,n}\right)^2+\mu_n^2}=\frac{2}{\left(-1+\sqrt{1-4\alpha^2}\right)\lambda_{1,n}^\pm}
\ \ \ \text{and} \ \ \
 \left(\lambda^\pm_{2,n}\right)^2+\mu_n^2=-\frac{1+\sqrt{1-4\alpha^2}}{2}\lambda_{2,n}^\pm.
\end{equation}
Inserting \eqref{Coupled Equation 2.16} in \eqref{Coupled Equation 2.8}, we get \eqref{Coupled Equation 2.14}.\\[0.1in]
\textbf{Case 2.} If $\alpha^2> \frac{1}{4}$, then from \eqref{Coupled Equation 2.15-new2}, we get
\begin{equation}\label{Coupled Equation 2.16new2}
\frac{1}{\left(\lambda^\pm_{1,n}\right)^2+\mu_n^2}=\frac{2}{\left(-1+i\sqrt{4\alpha^2-1}\right)\lambda_{1,n}^\pm}
\ \ \ \text{and} \ \ \
 \left(\lambda^\pm_{2,n}\right)^2+\mu_n^2=-\frac{1+i\sqrt{4\alpha^2-1}}{2}\lambda_{2,n}^\pm.
\end{equation}
Inserting \eqref{Coupled Equation 2.16new2} in \eqref{Coupled Equation 2.8}, we get \eqref{Coupled Equation 2.14new2}. Thus, the proof is  complete.
 \end{proof}
We now search for the asymptotic behaviour of the eigenvectors of $\mathcal{A}$. From Lemma \ref{Coupled Lemma 2.4}, we remark that if 
$\alpha^2=\frac{1}{4}$, we have double eigenvalues. In this case, we look for the corresponding root vectors. 
\begin{lem}\label{Coupled Lemma 2.4a}
{If $a=1$ and $\gamma=0$, then the   eigenvectors $e_{1,n}^{\pm},\ e_{2,n}^{\pm}$ of  $\mathcal{A}$ satisfy the following asymptotic expansion.\\[0.1in]
\noindent\textbf{Case 1.} If $0<\alpha^2< \frac{1}{4}$, then we have
\begin{equation}\label{Coupled Equation 2.18a}
\begin{array}{ll}
e_{1,n}^{\pm}=\displaystyle{\frac{1}{\beta^+_1}\left(\frac{e_n}{\pm\ i\mu_n}, e_n ,  \frac{\beta_1 e_n}{\pm  i   \mu_n},  \beta_1 e_n   \right)^{\top}}+O\left(\frac{1}{\mu_n^2}\right),

   \\ \\
   
e_{2,n}^{\pm}=\displaystyle{\frac{1}{\delta^+_1}\left(\frac{\delta_1 e_n}{\pm  i   \mu_n},\delta_1 e_n, \frac{e_n}{\pm  i   \mu_n}, e_n    \right)^{\top}}+O\left(\frac{1}{\mu_n^2}\right),
\end{array}
\end{equation}
 where $\beta_1=\dfrac{2\alpha }{-1+\sqrt{1-4\alpha^2} },\ \delta_1=-\dfrac{1+\sqrt{1-4\alpha^2}}{2\alpha},\  \beta^+_1=\sqrt{2+2\left|\beta_1\right|^2}$ and $\delta^+_1=\sqrt{2+2\left|\delta_1\right|^2}$.\\[0.1in]
\noindent\textbf{Case 2.} If $\alpha^2= \frac{1}{4}$, we suppose that  $\alpha=\frac{1}{2}$  since the analysis follows similarly if $\alpha=-\frac{1}{2}$, then the   eigenvectors $e_{n}^{\pm}$ of  $\mathcal{A}$ satisfy the following asymptotic expansion
\begin{equation}\label{Coupled Equation 2.20a}
e_{n}^{\pm}=\displaystyle{\frac{1}{2}\left(\frac{e_n}{\pm  i   \mu_n}, e_n ,  \frac{e_n}{\mp  i   \mu_n},  -  e_n  \right)^{\top}}+O\left(\frac{1}{\mu_n^2}\right),
\end{equation}
and the root vectors of $\mathcal{A}$ satisfy the following asymptotic expansion
\begin{equation}\label{Coupled Equation 2.21a}
\tilde{e}_{n}^{\pm}=\displaystyle{\frac{1}{\sqrt{2}}\left(\frac{e_n}{\pm  i   \mu_n}, e_n ,  0,  0  \right)^{\top}}+\left(O\left(\frac{1}{\mu_{n}^{2}}\right),0,0,O\left(\frac{1}{\mu_{n}}\right)\right)^{\top}.
\end{equation}
\noindent\textbf{Case 3.} If $\alpha^2>\frac{1}{4}$, then we have
\begin{equation}\label{Coupled Equation 2.18anew}
\begin{array}{ll}
e_{1,n}^{\pm}=\displaystyle{\frac{1}{\beta^+_3}\left(\frac{e_n}{\pm\ i\mu_n}, e_n ,  \frac{\beta_3 e_n}{\pm  i   \mu_n},  \beta_3 e_n   \right)^{\top}}+O\left(\frac{1}{\mu_n^2}\right),

   \\ \\
   
e_{2,n}^{\pm}=\displaystyle{\frac{1}{\delta^+_3}\left(\frac{\delta_3 e_n}{\pm  i   \mu_n},\delta_3 e_n, \frac{e_n}{\pm  i   \mu_n}, e_n    \right)^{\top}}+O\left(\frac{1}{\mu_n^2}\right),
\end{array}
\end{equation}
 where $\beta_3=\dfrac{2\alpha }{-1+i\sqrt{4\alpha^2-1} },\ \delta_3=-\dfrac{1+i\sqrt{4\alpha^2-1}}{2\alpha},\  \beta^+_3=\sqrt{2+2\left|\beta_3\right|^2}$ and $\delta^+_3=\sqrt{2+2\left|\delta_3\right|^2}$.}
\end{lem}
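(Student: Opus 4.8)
The plan is to start from the exact eigenvector formulas obtained in Lemma~\ref{eigenvectors}, namely \eqref{Coupled Equation 2.14} and \eqref{Coupled Equation 2.14new2}, and simply feed in the spectral asymptotics of Lemma~\ref{Coupled Lemma 2.4}. For the generic cases ($0<\alpha^2<\tfrac14$ and $\alpha^2>\tfrac14$), by Lemma~\ref{Coupled Lemma 2.4} one has $\lambda_{1,n}^\pm=\pm i\mu_n+c+O(1/\mu_n)$ for the appropriate constant $c$, hence $1/\lambda_{1,n}^\pm=\tfrac{1}{\pm i\mu_n}+O(1/\mu_n^2)$; the entries $e_n$ and $\beta_j e_n$ of the first and second components are exact, and the first and third components carry the factor $1/\lambda_{1,n}^\pm$, which accounts for the $O(1/\mu_n^2)$ remainder (note $1/\mu_n \cdot 1/\mu_n = 1/\mu_n^2$ only because of the extra $1/\mu_n$ already present — care must be taken that the remainder in the first component is genuinely $O(1/\mu_n^2)$ and not $O(1/\mu_n)$, which it is since the whole component is $O(1/\mu_n)$ in size and the relative error is $O(1/\mu_n)$). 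An identical computation handles $e_{2,n}^\pm$. Then one normalizes: $\|e_{1,n}^\pm\|_{\mathcal H}^2 = |B_{1,n}^\pm|^2\big(\|A^{1/2}(e_n/\lambda_{1,n}^\pm)\|^2 + \|A^{1/2}(\beta_j e_n/\lambda_{1,n}^\pm)\|^2 + \|e_n\|^2 + \|\beta_j e_n\|^2\big)$; using $Ae_n=\mu_n^2 e_n$, $\|A^{1/2}(e_n/\lambda_{1,n}^\pm)\|^2 = \mu_n^2/|\lambda_{1,n}^\pm|^2 = 1+O(1/\mu_n)$ by Lemma~\ref{Coupled Lemma 2.1}, so the bracket tends to $2+2|\beta_j|^2$, giving $|B_{1,n}^\pm|=\tfrac{1}{\beta_j^+}(1+O(1/\mu_n))$ with $\beta_j^+=\sqrt{2+2|\beta_j|^2}$, and similarly for $C_{2,n}^\pm$ and $\delta_j^+$. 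Substituting these normalization constants into the asymptotic vector expressions yields \eqref{Coupled Equation 2.18a} and \eqref{Coupled Equation 2.18anew}.

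The genuinely delicate case is $\alpha^2=\tfrac14$, i.e. \eqref{Coupled Equation 2.20a} and \eqref{Coupled Equation 2.21a}, since here $\sqrt{1-4\alpha^2}=0$ and the two branches coalesce into a double eigenvalue, so the operator $\mathcal A$ is not diagonalizable on the corresponding two-dimensional generalized eigenspace and one must produce a Jordan chain. Taking $\alpha=\tfrac12$: the double root of the characteristic equation \eqref{charact eq} is, up to $O(1/\mu_n)$, $\lambda_n^\pm=\pm i\mu_n-\tfrac14$, and the honest eigenvector is obtained by plugging $\alpha=\tfrac12$, $\beta_1=\delta_1=-1$ into \eqref{Coupled Equation 2.8}; the asymptotics of Lemma~\ref{Coupled Lemma 2.4} then give, after normalization by $\tfrac12$, the vector $\tfrac12\big(\tfrac{e_n}{\pm i\mu_n},e_n,\tfrac{e_n}{\mp i\mu_n},-e_n\big)^\top+O(1/\mu_n^2)$. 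For the root vector I would solve $(\mathcal A-\lambda_n^\pm I)\tilde e_n^\pm = e_n^\pm$ componentwise in the ansatz $\tilde e_n^\pm=(p e_n, q e_n, r e_n, s e_n)^\top$ using \eqref{Coupled Equation 2.3}-type relations: the four scalar equations read $q-\lambda p = (\text{first comp. of }e_n^\pm)$, $-\mu_n^2 p - q - \tfrac12 s - \lambda q = (\cdots)$, $s-\lambda r = (\cdots)$, $-\mu_n^2 r + \tfrac12 q - \lambda s = (\cdots)$; solving this $4\times4$ linear system (exploiting that the homogeneous system is exactly the eigenvector equation, so one picks the particular solution orthogonal/normalized suitably) and extracting the leading order produces $\tilde e_n^\pm = \tfrac{1}{\sqrt2}\big(\tfrac{e_n}{\pm i\mu_n}, e_n, 0, 0\big)^\top$ plus the stated remainder $(O(1/\mu_n^2),0,0,O(1/\mu_n))^\top$. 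The appearance of a nonzero $O(1/\mu_n)$ in the fourth slot (rather than $O(1/\mu_n^2)$) reflects that the $y$-$z$ block is fed only through the $O(1)$ coupling and the degeneracy, so the correction there is one order worse; this is the bookkeeping point most likely to trip one up, and I would double-check it by verifying $(\mathcal A-\lambda_n^\pm)\tilde e_n^\pm - e_n^\pm = O(1/\mu_n)$ in $\mathcal H$-norm directly.

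The main obstacle, then, is not the generic-case algebra (which is a routine propagation of the $O(1/\mu_n)$ spectral errors through explicit rational expressions, controlled throughout by Lemma~\ref{Coupled Lemma 2.1} to keep $|\lambda_n/\mu_n|$ bounded away from $0$ and $\infty$), but rather organizing the $\alpha^2=\tfrac14$ Jordan-block computation cleanly: one must fix the normalization of the eigenvector, choose a consistent normalization/transversality condition for the root vector so that $\{e_n^\pm,\tilde e_n^\pm\}$ is asymptotically well-conditioned (this is what will later make the full family a Riesz basis), and track which entries are $O(1/\mu_n)$ versus $O(1/\mu_n^2)$. I would present the generic cases briefly (one displayed computation each for the vector and for the norm), then devote the bulk of the proof to the degenerate case, writing out the $4\times4$ system for the root vector and its leading-order solution, and conclude by remarking that in all cases the error terms are uniform in $n$ for $n$ large, which is all that is needed for the Riesz-basis argument in the next section.
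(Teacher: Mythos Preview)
Your approach is essentially the paper's: feed the eigenvalue asymptotics of Lemma~\ref{Coupled Lemma 2.4} into the exact eigenvector formulas of Lemma~\ref{eigenvectors}, and for $\alpha^2=\tfrac14$ solve the generalized eigenvector equation componentwise. Two points are worth noting.

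First, in the generic cases your normalization step is both unnecessary and, as written, would spoil the stated error bound. Since eigenvectors are only determined up to a scalar, the constants $B_{1,n}^\pm,C_{2,n}^\pm$ in \eqref{Coupled Equation 2.14} and \eqref{Coupled Equation 2.14new2} are free. The paper simply \emph{sets} $B_{1,n}^\pm=1/\beta_j^+$ and $C_{2,n}^\pm=1/\delta_j^+$ as exact, $n$-independent constants; then the second and fourth entries carry no error at all, and the $O(1/\mu_n^2)$ comes solely from $1/\lambda_{j,n}^\pm=1/(\pm i\mu_n)+O(1/\mu_n^2)$ in the first and third entries. If instead you unit-normalize, your own computation gives $|B_{1,n}^\pm|=\tfrac{1}{\beta_j^+}(1+O(1/\mu_n))$, and multiplying this into the $O(1)$ entries produces an $O(1/\mu_n)$ error, not the $O(1/\mu_n^2)$ claimed in \eqref{Coupled Equation 2.18a}--\eqref{Coupled Equation 2.18anew}. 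So drop the normalization: just fix the scalar.

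Second, for the root vector when $\alpha=\tfrac12$, the paper exploits the redundancy in the $4\times4$ system more directly than you indicate. Writing $\tilde u_n^\pm=c^\pm e_n$, $\tilde y_n^\pm=d^\pm e_n$ and using the first and third equations to eliminate $\tilde v_n^\pm,\tilde z_n^\pm$, the two remaining equations for $(c^\pm,d^\pm)$ are linearly dependent (precisely because $\lambda_n^\pm$ solves the characteristic equation). One is therefore free to take $d^\pm=0$, which forces $c^\pm=(4\lambda_n^\pm+1)/(\lambda_n^\pm)^2$ and yields the closed form
\[
\tilde e_n^\pm = B_n^\pm\left(\frac{4\lambda_n^\pm+1}{(\lambda_n^\pm)^2}e_n,\ 4e_n,\ 0,\ \frac{e_n}{\lambda_n^\pm}\right)^\top,
\]
with $B_n^\pm=\tfrac{1}{4\sqrt2}$. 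The $O(1/\mu_n)$ in the fourth slot of \eqref{Coupled Equation 2.21a} is then simply $1/\lambda_n^\pm$ itself, and the $O(1/\mu_n^2)$ in the first slot comes from expanding $(4\lambda_n^\pm+1)/(\lambda_n^\pm)^2$. This explicit choice sidesteps the orthogonality/transversality bookkeeping you mention and makes the componentwise error orders transparent.
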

\begin{proof}
When $a=1$ and $\gamma=0$, we must now subdivide the proof into three cases.\\[0.1in]
\noindent\textbf{Case 1.} If $0<\alpha^2< \frac{1}{4}$, then from  \eqref{Coupled Equation 2.17}, we obtain
\begin{equation}\label{Coupled Equation 2.24}
\frac{1}{\lambda_{1,n}^{\pm}}=\frac{1}{\pm   i   \mu_n}+O\left(\frac{1}{\mu_n^2}\right)
 \ \ \ \text{and}\  \ \
\frac{1}{\lambda_{2,n}^{\pm}}=\frac{1}{\pm   i   \mu_n}+O\left(\frac{1}{\mu_n^2}\right).
\end{equation}
Setting $B_{1,n}^\pm=\frac{1}{\beta^+_1}$ and $C_{2,n}^\pm=\frac{1}{\delta^+_1}$ in  \eqref{Coupled Equation 2.14}, then using \eqref{Coupled Equation 2.24}, we get \eqref{Coupled Equation 2.18a}.\\[0.1in]
\noindent\textbf{Case 2.} If $\alpha^2=\frac{1}{4}$,  we suppose that  $\alpha=\frac{1}{2}$  since the analysis follows similarly if $\alpha=-\frac{1}{2}$. In this case, since \eqref{charact eq} admits two double solutions $\lambda_{n}^\pm$, the associated eigenvectors are given by  
\begin{equation}\label{Coupled Equation 2.26}
e_{n}^{\pm}=B_{n}^{\pm}
\left(
\frac{e_n}{\lambda_{n}^{\pm}}
          ,
e_n 
          ,
-\frac{ e_n}{\lambda_{n}^{\pm} }  
       ,
-e_n   
\right)^{\mathsf{T}},
\end{equation}
where $B_n^\pm\in\mathbb{C}.$ Furthermore, from \eqref{Coupled Equation 2.22}, we have
\begin{equation}\label{Coupled Equation 2.28}
\frac{1}{\lambda_{n}^{\pm}}
=
\frac{1}{\pm   i   \mu_n\left[1+O\left(\frac{1}{\mu_n}\right)\right]}
=
\frac{1}{\pm   i   \mu_n}+O\left(\frac{1}{\mu_n^2}\right).
\end{equation}
Setting $B_n^\pm=\frac{1}{2}$ in \eqref{Coupled Equation 2.26} and using  \eqref{Coupled Equation 2.28}, we get \eqref{Coupled Equation 2.20a}.
We next look for corresponding root vectors 
\begin{equation*}
\tilde{e}^{\pm}_{n}
=\left(
\tilde{u}^{\pm}_n
          ,
\tilde{v}^{\pm}_n
         ,
\tilde{y}^{\pm}_n 
         ,
\tilde{z}^{\pm}_n
\right)^{\mathsf{T}},
\end{equation*}
such that 
 \begin{equation*}
\left(\lambda_{n}^{\pm}I-\mathcal{A}\right)\tilde{e}^{\pm}_{n}
=
{e}^{\pm}_{n}.
\end{equation*}
Equivalently, we have
\begin{equation}\label{Coupled Equation 2.29}
\left\{
\begin{array}{ll}

\lambda^{\pm}_n \tilde{u}^{\pm}_n-\tilde{v}^{\pm}_n=\frac{e_n}{\lambda_{1,n}^{\pm}},

           \\ \noalign{\medskip}

 \lambda^{\pm}_n \tilde{v}^{\pm}_n+A\tilde{u}^{\pm}_n+ \tilde{v}^{\pm}_n+\alpha \tilde{z}^{\pm}_n=e_n ,
 
            \\ \noalign{\medskip} 
 
 \lambda^{\pm}_n \tilde{y}^{\pm}_n-\tilde{z}^{\pm}_n=-\frac{e_n}{\lambda_{1,n}^{\pm}},
 
            \\ \noalign{\medskip}
 
\lambda^{\pm}_n \tilde{z}^{\pm}_n +A\tilde{y}^{\pm}_n-\alpha \tilde{v}^{\pm}_n=-e_n.

\end{array}
\right.
\end{equation}
Setting $\tilde{u}^{\pm}_n=c^{\pm} e_n$ and $\tilde{y}^{\pm}_n=d^{\pm} e_n$ in \eqref{Coupled Equation 2.29}, we get
\begin{equation}\label{Coupled Equation 2.30}
\tilde{v}^{\pm}_n=\left( c^{\pm}\lambda^{\pm}_n -\frac{1}{\lambda_{n}^{\pm}}\right)e_n
\ \ \ \text{and}\ \ \ 
 \tilde{z}^{\pm}_n=\left( d^{\pm} \lambda^{\pm}_n  +\frac{1 }{\lambda_{n}^{\pm}}\right)e_n,
\end{equation}
where the constants $c^{\pm}$ and $d^{\pm}$ satisfy
\begin{equation}\label{Coupled Equation 2.31}
\left\{
\begin{array}{ll}

  \left(\mu^2_n+\left(\lambda_n^{\pm}\right)^2+\lambda_n^{\pm} \right)c^{\pm} +\alpha\lambda_n^{\pm}d^{\pm} =2+\frac{1}{2\lambda_n^{\pm}},
  
           \\ \noalign{\medskip}

 -\alpha\lambda_n^{\pm} c^{\pm} +\left(\mu^2_n+ \left(\lambda_n^{\pm}\right)^2\right)d^{\pm} =-2-\frac{1}{2\lambda_n^{\pm}}.
 
\end{array}
\right.
\end{equation}
Since $\lambda_n^{\pm}$ satisfy \eqref{charact eq}, then the first equation of \eqref{Coupled Equation 2.31} can be reduced to the second one.
 Therefore, taking
$$ d^{\pm} =0
 \ \ \ \text{and}\ \ \ 
 c^{\pm}=\frac{4\lambda_n^{\pm}+1}{\left( \lambda_n^{\pm}\right)^2}$$
in \eqref{Coupled Equation 2.30}, we get
\begin{equation}\label{Coupled Equation 2.32}
\tilde{e}_{n}^{\pm}=B_{n}^{\pm}\left(\dfrac{4\lambda_n^{\pm}+1}{ \left( \lambda_n^{\pm}\right)^2}e_n, 4e_n, 0, \dfrac{e_n }{\lambda_{n}^{\pm}} \right)^{\top},
\end{equation}
where $B^\pm_n\in\mathbb{C}.$ From \eqref{Coupled Equation 2.22} and \eqref{Coupled Equation 2.28}, we obtain
\begin{equation}\label{Coupled Equation 2.33}
\frac{4\lambda_n^{\pm}+1}{ \left( \lambda_n^{\pm}\right)^2}=\frac{4}{\pm  i   \mu_n}+O\left(\frac{1}{\mu_n^2}\right)
\ \ \ \text{and}\ \ \ 
\frac{1}{  \lambda_n^{\pm}}=O\left(\frac{1}{\mu_n}\right).
\end{equation}
Finally, setting $B^\pm_n=\frac{1}{4\sqrt{2}}$  in \eqref{Coupled Equation 2.32}, then using   \eqref{Coupled Equation 2.33}, we get \eqref{Coupled Equation 2.21a}. \\[0.1in]
\noindent\textbf{Case 3.} If $\alpha^2> \frac{1}{4}$, then from  \eqref{Coupled Equation 2.17new}, we obtain
\begin{equation}\label{Coupled Equation 2.24new}
\frac{1}{\lambda_{1,n}^{\pm}}=\frac{1}{\pm   i   \mu_n}+O\left(\frac{1}{\mu_n^2}\right)
 \ \ \ \text{and}\  \ \
\frac{1}{\lambda_{2,n}^{\pm}}=\frac{1}{\pm   i   \mu_n}+O\left(\frac{1}{\mu_n^2}\right).
\end{equation}
Setting $B_{1,n}^\pm=\frac{1}{\beta^+_3}$ and $C_{2,n}^\pm=\frac{1}{\delta^+_3}$ in  \eqref{Coupled Equation 2.14}, then using \eqref{Coupled Equation 2.24new}, we get \eqref{Coupled Equation 2.18anew}. Thus, the proof is  complete.
\end{proof}
\noindent Let now $E^{\pm}_{1,n},\ E^{\pm}_{2,n}$ be linearly independent eigenvectors of the decoupled system (corresponding to $\alpha=0$). Then one has
\begin{equation}\label{Coupled Equation 2.34}
E^{\pm}_{1,n}=\displaystyle{\frac{1}{\sqrt{2}}
\left(
\frac{e_n}{\pm  i   \sqrt{a}\mu_n},
e_n,0,0\right)^{\top}}           
\ \ \ \text{ and }\ \ \ 
E^{\pm}_{2,n}=\displaystyle{\frac{1}{\sqrt{2}}
\left(0,0,\frac{e_n}{\pm  i   \mu_n}, e_n\right)^{\top}}.
\end{equation}
Moreover, for every $n\geq1$, define 
\begin{equation}\label{eq:Wn}
W_n
=
\text{span}\left\{E_{1,n}^+,E_{1,n}^-,E_{2,n}^+,E_{2,n}^-\right\},
\end{equation}
and
\begin{equation}\label{eq:Vn}
V_n
=
{span}\left\{e_{1,n}^+,e_{1,n}^-,e_{2,n}^+,e_{2,n}^-\right\},
\quad
\tilde{V}_n
=
{span}\left\{e_{n}^+,e_{n}^-,\tilde{e}_{n}^+,\tilde{e}_{n}^-\right\}.
\end{equation}
From what precedes, one gets the following corollary.
\begin{cor}\label{Coupled Lemma 2.5}
{If $a=1$ and $\gamma=0$, then from Lemma \ref{Coupled Lemma 2.4a}, the following relationship holds.\\[0.1in]
\noindent\textbf{Case 1.} If $0<\alpha^2<\frac{1}{4}$, then
\begin{equation}\label{Coupled Equation 2.35}
\left(e_{1,n}^+,e_{1,n}^-,e_{2,n}^+,e_{2,n}^-\right)
=
\left(E_{1,n}^+,E_{1,n}^-,E_{2,n}^+,E_{2,n}^-\right)L_n,
\end{equation}
where
\begin{equation}\label{Coupled Equation 2.36}
L_n=\sqrt{2}\begin{pmatrix}
\frac{1}{\beta^+_1} &0&\frac{\delta_1}{\delta^+_1}&0
           \\ \noalign{\medskip}
0 &\frac{1}{\beta^+_1}&0&\frac{\delta_1}{\delta^+_1}
           \\ \noalign{\medskip}
\frac{\beta_1}{\beta^+_1} &0&\frac{1}{\delta^+_1}&0
           \\ \noalign{\medskip}
0&\frac{\beta_1}{\beta^+_1}&0 &\frac{1}{\delta^+_1}
           \\ \noalign{\medskip}
\end{pmatrix}+O\left(\frac{1}{\mu_n^2}\right).
\end{equation}
\noindent\textbf{Case 2.} If $\alpha^2= \frac{1}{4}$, then
\begin{equation}\label{Coupled Equation 2.37}
\left(e_{n}^+,e_{n}^-,\tilde{e}_{n}^+,\tilde{e}_{n}^-\right)
=
\left(E_{1,n}^+,E_{1,n}^-,E_{2,n}^+,E_{2,n}^-\right)\tilde{L}_n,
\end{equation}
where
\begin{equation}\label{Coupled Equation 2.38}
\tilde{L}_n=\frac{1}{2}\begin{pmatrix}
\sqrt{2} &0&2&0
           \\ \noalign{\medskip}
0 &\sqrt{2}&0&2
           \\ \noalign{\medskip}
-\sqrt{2} &0&0&0
           \\ \noalign{\medskip}
0&-\sqrt{2} &0 &0
           \\ \noalign{\medskip}
\end{pmatrix}
+O\left(\frac{1}{\mu_n}\right).
\end{equation}}
\noindent\textbf{Case 3.} If $\alpha^2>\frac{1}{4}$, then
\begin{equation}\label{Coupled Equation 2.35new}
\left(e_{1,n}^+,e_{1,n}^-,e_{2,n}^+,e_{2,n}^-\right)
=
\left(E_{1,n}^+,E_{1,n}^-,E_{2,n}^+,E_{2,n}^-\right)\tilde{\tilde{L}}_n,
\end{equation}
where
\begin{equation}\label{Coupled Equation 2.36new}
\tilde{\tilde{L}}_n=\sqrt{2}\begin{pmatrix}
\frac{1}{\beta^+_3} &0&\frac{\delta_3}{\delta^+_3}&0
           \\ \noalign{\medskip}
0 &\frac{1}{\beta^+_3}&0&\frac{\delta_3}{\delta^+_3}
           \\ \noalign{\medskip}
\frac{\beta_3}{\beta^+_3} &0&\frac{1}{\delta^+_3}&0
           \\ \noalign{\medskip}
0&\frac{\beta_3}{\beta^+_3}&0 &\frac{1}{\delta^+_3}
           \\ \noalign{\medskip}
\end{pmatrix}+O\left(\frac{1}{\mu_n^2}\right).
\end{equation}
\xqed{$\square$}
\end{cor}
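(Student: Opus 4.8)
The statement to prove is Corollary \ref{Coupled Lemma 2.5}, which expresses the eigenvectors (and root vectors) of $\mathcal{A}$ in terms of the decoupled eigenvectors $E_{1,n}^\pm, E_{2,n}^\pm$ via explicit transition matrices $L_n$, $\tilde L_n$, $\tilde{\tilde L}_n$ with controlled error terms. This is essentially a bookkeeping corollary: the hard analytic work (the asymptotic expansions of the eigenvectors) has already been done in Lemma \ref{Coupled Lemma 2.4a}, so the proof amounts to rewriting those expansions as matrix identities and reading off the entries.

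The plan is as follows. First I would recall the explicit form of $E_{1,n}^\pm$ and $E_{2,n}^\pm$ from \eqref{Coupled Equation 2.34} and note that, since $a=1$ here, $E_{1,n}^\pm=\frac{1}{\sqrt 2}(e_n/(\pm i\mu_n),e_n,0,0)^\top$ and $E_{2,n}^\pm=\frac{1}{\sqrt 2}(0,0,e_n/(\pm i\mu_n),e_n)^\top$; these four vectors are (up to the $O(\mu_n^{-2})$ tails) exactly the ``coordinate frame'' in which the asymptotic expansions of Lemma \ref{Coupled Lemma 2.4a} are written. Then, in Case 1 ($0<\alpha^2<\frac14$), I would take the leading-order expression for $e_{1,n}^\pm$ from \eqref{Coupled Equation 2.18a}, namely $\frac{1}{\beta_1^+}(e_n/(\pm i\mu_n),e_n,\beta_1 e_n/(\pm i\mu_n),\beta_1 e_n)^\top+O(\mu_n^{-2})$, and observe that this equals $\frac{\sqrt 2}{\beta_1^+}E_{1,n}^\pm+\frac{\sqrt 2 \beta_1}{\beta_1^+}E_{2,n}^\pm+O(\mu_n^{-2})$; similarly $e_{2,n}^\pm=\frac{\sqrt 2 \delta_1}{\delta_1^+}E_{1,n}^\pm+\frac{\sqrt 2}{\delta_1^+}E_{2,n}^\pm+O(\mu_n^{-2})$. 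Collecting these four identities into a single matrix equation gives precisely \eqref{Coupled Equation 2.35}--\eqref{Coupled Equation 2.36}. Case 3 ($\alpha^2>\frac14$) is verbatim the same computation with $\beta_1,\delta_1,\beta_1^+,\delta_1^+$ replaced by $\beta_3,\delta_3,\beta_3^+,\delta_3^+$, using \eqref{Coupled Equation 2.18anew}, yielding \eqref{Coupled Equation 2.35new}--\eqref{Coupled Equation 2.36new}.

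For Case 2 ($\alpha^2=\frac14$, say $\alpha=\frac12$), the argument is the same in spirit but one must handle the double eigenvalue: the relevant basis of $\tilde V_n$ is $\{e_n^+,e_n^-,\tilde e_n^+,\tilde e_n^-\}$. From \eqref{Coupled Equation 2.20a}, $e_n^\pm=\frac12(e_n/(\pm i\mu_n),e_n,e_n/(\mp i\mu_n),-e_n)^\top+O(\mu_n^{-2})$, and since $e_n/(\mp i\mu_n)=-e_n/(\pm i\mu_n)$, this is $\frac{\sqrt 2}{2}E_{1,n}^\pm-\frac{\sqrt 2}{2}E_{2,n}^\pm+O(\mu_n^{-2})$, giving the first and third columns of $\tilde L_n$. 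From \eqref{Coupled Equation 2.21a}, $\tilde e_n^\pm=\frac{1}{\sqrt 2}(e_n/(\pm i\mu_n),e_n,0,0)^\top+(O(\mu_n^{-2}),0,0,O(\mu_n^{-1}))^\top=E_{1,n}^\pm+(0,0,0,O(\mu_n^{-1}))^\top$; the last error component, measured in the $\HH$-norm and expressed in the $E$-frame, is an $O(\mu_n^{-1})$ multiple of $E_{2,n}^\pm$ plus smaller terms, which accounts for the weaker $O(\mu_n^{-1})$ remainder in \eqref{Coupled Equation 2.38} (compared to $O(\mu_n^{-2})$ in Cases 1 and 3). Assembling these four identities gives \eqref{Coupled Equation 2.37}--\eqref{Coupled Equation 2.38}.

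The only point requiring any care—and hence the ``main obstacle,'' though it is mild—is the precise tracking of the error terms: one must check that the $O(\mu_n^{-2})$ (resp.\ $O(\mu_n^{-1})$) tails in Lemma \ref{Coupled Lemma 2.4a}, which are vectors in $\HH$, can be re-expanded in the basis $\{E_{1,n}^\pm,E_{2,n}^\pm\}$ with coefficients of the same order, uniformly in $n$. This follows because $\{E_{1,n}^\pm,E_{2,n}^\pm\}$ is an orthonormal family in $\HH$ (a direct check using the norm $\|(u,v,y,z)\|_\HH^2=\|A^{1/2}u\|^2+\|A^{1/2}y\|^2+\|v\|^2+\|z\|^2$ with $a=1$), so the change-of-basis coefficients are just inner products $\langle\,\cdot\,,E_{k,n}^\pm\rangle_\HH$ and are therefore bounded in modulus by the $\HH$-norm of the tail; the $\mu_n$-dependence of the error order is preserved. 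Once this uniformity is noted, the corollary follows by direct substitution and collecting terms, with no further estimates needed.
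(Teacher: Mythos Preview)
Your proposal is correct and follows exactly the approach implicit in the paper: the paper states this corollary without proof (marking it only with a $\square$ after ``From what precedes, one gets the following corollary''), and your write-up is precisely the direct verification the authors leave to the reader---substituting the asymptotic expansions of Lemma \ref{Coupled Lemma 2.4a} into linear combinations of the orthonormal family $\{E_{1,n}^\pm,E_{2,n}^\pm\}$ and reading off the matrix entries. One tiny slip: in Case 2 the identities for $e_n^+$ and $e_n^-$ give the first and \emph{second} columns of $\tilde L_n$ (not first and third), while $\tilde e_n^+,\tilde e_n^-$ give the third and fourth; this does not affect the argument.
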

 \noindent In the sequel, our aim is to prove the following proposition.
\begin{prop}\label{Coupled Theorem 2.7}
Suppose  that $a=1$ and $\gamma=0.$ Then the following holds true.
\begin{description}
\item[$(i)$] If $\alpha^2\neq \frac{1}{4}$,
then the set $\left\{e_{1,n}^+,e_{1,n}^-,e_{2,n}^+,e_{2,n}^-\right\}_{n\geq1}$ of eigenvectors of $\mathcal{A}$ forms a Riesz basis in $\mathcal{H}$. In particular, all eigenvectors of $\mathcal{A}$ are of the form given in \eqref{Coupled Equation 2.4}.
\item[$(ii)$] If $\alpha^2=\frac{1}{4}$, then the set $\left\{e_{n}^+,e_{n}^-,\tilde{e}_{n}^+,\tilde{e}_{n}^-\right\}_{n\geq1}$ of eigenvectors and root vectors of $\mathcal{A}$ forms a Riesz basis in $\mathcal{H}$.  In particular, all eigenvectors of $\mathcal{A}$ are of the form given in \eqref{Coupled Equation 2.4}.
\end{description}
\end{prop}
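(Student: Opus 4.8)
The plan is to establish the Riesz basis property by comparing the family of eigenvectors (and root vectors) of $\mathcal{A}$ with the orthonormal family associated with the decoupled system $\alpha=0$, using a classical perturbation argument: a sequence that is ``quadratically close'' to an orthonormal basis and is itself complete and minimal (or $\omega$-linearly independent) is a Riesz basis. The orthonormal reference family is $\{E_{1,n}^\pm,E_{2,n}^\pm\}_{n\geq1}$ from \eqref{Coupled Equation 2.34}, which is an orthonormal basis of $\mathcal{H}$ (it diagonalizes the decoupled operator, which is skew-adjoint up to the bounded damping part; here with $a=1,\gamma=0$ these vectors are genuinely orthonormal once normalized as written). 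I would carry out the argument in the following steps.

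\textbf{Step 1: Reduction to a block analysis via Corollary \ref{Coupled Lemma 2.5}.} First I would invoke Corollary \ref{Coupled Lemma 2.5}: on each block $W_n$ the candidate vectors are the image of the orthonormal block basis under the $4\times4$ matrix $L_n$ (resp.\ $\tilde L_n$ in the double-root case, resp.\ $\tilde{\tilde L}_n$). Write $L_n=L_\infty+O(\mu_n^{-2})$ where $L_\infty$ is the constant matrix in \eqref{Coupled Equation 2.36}. The key computation is that $\det L_\infty\neq0$: indeed $L_\infty=\sqrt2\begin{pmatrix}I_2\otimes\text{(something)}\end{pmatrix}$ has, after reordering, a $2\times2$ block structure with blocks $\sqrt2\left(\begin{smallmatrix}1/\beta_1^+ & \delta_1/\delta_1^+\\ \beta_1/\beta_1^+ & 1/\delta_1^+\end{smallmatrix}\right)$, whose determinant is proportional to $1-\beta_1\delta_1$; since $\beta_1\delta_1 = \frac{2\alpha}{-1+\sqrt{1-4\alpha^2}}\cdot\bigl(-\frac{1+\sqrt{1-4\alpha^2}}{2\alpha}\bigr) = -\frac{1+\sqrt{1-4\alpha^2}}{-1+\sqrt{1-4\alpha^2}}\neq1$ precisely when $\alpha^2\neq\frac14$ (and one checks $\beta_1\ne\delta_1$ giving distinct branches), we get invertibility of $L_\infty$, hence of $L_n$ for $n$ large, uniformly. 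The same check applies to $\tilde{\tilde L}_n$ in Case 3 with $\beta_3,\delta_3$, and to $\tilde L_n$ in Case 2 where one reads off $\det\tilde L_\infty = \tfrac14(\sqrt2\cdot0 - 2\cdot(-\sqrt2))\cdot(\text{same for second }2\times2) \neq 0$.

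\textbf{Step 2: Quadratic closeness.} From Lemma \ref{Coupled Lemma 2.4a} each $e_{j,n}^\pm$ (resp.\ $e_n^\pm,\tilde e_n^\pm$) differs from the fixed unit vector $\frac{1}{\text{(norm)}}(\dots)$ — which is an orthonormal vector after the rotation by $L_\infty$ — by a term $O(\mu_n^{-2})$ in Case 1 and Case 3, and $O(\mu_n^{-1})$ for the root vector part in Case 2. Summing, $\sum_n \|e_{j,n}^{\pm} - f_{j,n}^{\pm}\|^2 < \infty$ where $\{f_{j,n}^\pm\}$ is the orthonormal basis obtained by applying the invertible limit transformation $L_\infty$ (resp.\ $\tilde{\tilde L}_\infty$, $\tilde L_\infty$) blockwise to $\{E_{k,n}^\pm\}$ and re-orthonormalizing (since $L_\infty$ is a fixed invertible matrix, its image of an orthonormal basis is a Riesz basis, and then one applies Bari's theorem relative to that). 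Concretely: the family $\{E_{k,n}^\pm L_\infty\}$ is a Riesz basis (finite fixed invertible block change of an orthonormal basis), and $\{e_{j,n}^\pm\}$ is quadratically close to it, so by the Bari–Gohberg–Krein stability theorem it is a Riesz basis \emph{provided} it is $\omega$-linearly independent and complete.

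\textbf{Step 3: Completeness and minimality.} For completeness I would argue that $\mathcal{A}$ has compact resolvent and is dissipative, so its generalized eigenvectors are complete in $\mathcal{H}$ (standard, or: the only vectors orthogonal to all $V_n$, $\tilde V_n$ would have to vanish because on the $n$-th spectral block the four listed vectors span the whole $4$-dimensional reducing subspace $\text{span}\{e_n\}\times\cdots$, as $L_n$ is invertible). Minimality / $\omega$-independence follows from the uniform invertibility of $L_n$ established in Step 1 together with block orthogonality between distinct $n$ (the blocks $W_n$ are mutually orthogonal and each $V_n$ projects isomorphically onto $W_n$ with uniformly bounded inverse). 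I expect \textbf{Step 1 — verifying $\det L_\infty\neq0$ exactly on the stated parameter range $\alpha^2\neq\frac14$, and correctly handling the degenerate double-eigenvalue case $\alpha^2=\frac14$ with the root vectors} — to be the main obstacle, since it is where the dichotomy in the statement comes from and where an algebra slip would be fatal; the rest is the routine machinery of Bari's theorem applied blockwise. Finally, ``in particular all eigenvectors are of the form \eqref{Coupled Equation 2.4}'' follows because the eigenvalues $\lambda_{j,n}^\pm$ are (for $n$ large) simple in Case (i), so each eigenspace is one-dimensional and spanned by the vector already exhibited in \eqref{Coupled Equation 2.4}; for small $n$ one notes \eqref{Coupled Equation 2.5} forces the stated form whenever $\lambda$ is a root of \eqref{charact eq} for that particular $\mu_n$, and simplicity of the spectrum of $A$ prevents any mixing across different $n$. $\square$
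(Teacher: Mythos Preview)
Your overall strategy---compare blockwise with the decoupled orthonormal basis via the matrices $L_n$ of Corollary~\ref{Coupled Lemma 2.5} and use that the limit matrix $L_\infty$ is invertible---is exactly the paper's. Your determinant checks in Step~1 are correct and match the dichotomy $\alpha^2\neq\tfrac14$ versus $\alpha^2=\tfrac14$.

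There is, however, a genuine gap in Step~2. You invoke Bari's theorem through the quadratic-closeness condition $\sum_n\|e_{j,n}^\pm-f_{j,n}^\pm\|^2<\infty$, claiming this follows from the $O(\mu_n^{-2})$ (Cases~1,~3) or $O(\mu_n^{-1})$ (Case~2) error bounds. But the abstract hypotheses on $A$ give only $\mu_n\to\infty$; nothing guarantees $\sum_n\mu_n^{-2}<\infty$ or even $\sum_n\mu_n^{-4}<\infty$. (For the Dirichlet Laplacian in dimension $N$, Weyl's law gives $\mu_n\sim c\,n^{1/N}$, so $\sum\mu_n^{-2}$ diverges already for $N\geq2$.) So the Bari step, as written, fails in general.

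The fix is immediate and you already have every ingredient: drop Bari entirely and argue directly from the uniform invertibility of $L_n$. Since the blocks $W_n$ are mutually orthogonal and span $\mathcal{H}$, and since $V_n=L_n(W_n)=W_n$ as four-dimensional subspaces (both sit inside the $e_n$-component block), the uniform bounds $m\leq\|L_n\|,\|L_n^{-1}\|\leq M$ (which follow from $L_n\to L_\infty$ invertible, plus invertibility of the finitely many small-$n$ matrices) give the Riesz-basis inequalities directly: for any finite sum $x=\sum_n\sum_j c_{j,n}e_{j,n}$, orthogonality of the $W_n$'s and the uniform bounds on $L_n$ yield $c\sum|c_{j,n}|^2\leq\|x\|^2\leq C\sum|c_{j,n}|^2$. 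This is precisely the content of Proposition~\ref{Chapter pr-57}, which the paper quotes and applies; no summability of the perturbation is needed, only its decay to zero.
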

\noindent To prove Proposition  \ref{Coupled Theorem 2.7}, we first recall Lemma  3.1 in \cite{RaoLoreti01}.
\begin{prop}\label{Chapter pr-57}
 {$\left(\textbf{Lemma 3.1 in \cite{RaoLoreti01}}\right)$ Let $\left\{X_n\right\}_{n\geq1}$ be a Riesz basis of subspaces in a Hilbert space $H$ and $\left\{Y_n\right\}_{n\geq1}$ a Riesz sequence of subspaces in $H.$ Assume that there exist a sequence of isomorphisms $\left\{L_n\right\}_{n\geq1}$ from $X_n$ onto $Y_n$ and positive constants $m,\ M$  independent of $n$ such that
\begin{equation}\label{Coupled Equation 2.39}
\forall\ x_n\in X_n,\ \forall\ n\geq1,\quad
m\left\|x_n\right\|
\leq
\left\|L_nx_n\right\|
\leq
 M\left\|x_n\right\|.
\end{equation}
Assume furthermore that there exist a Riesz basis $\left\{f_{n,i}\right\}_{1\leq i\leq I_n}\ \left(I_n \leq +\infty\right)$ in each $X_n$ and positive constants $c,\ C$  independent of $n$ such that, for every $x_n=\sum_{i=1}^{I_n}\alpha_{n,i}f_{n,i}$, one has
\begin{equation}\label{Coupled Equation 2.40}
c\sum_{i=1}^{I_n}\left|\alpha_{n,i}\right|^2
\leq
\left\|x_n\right\|^2
\leq 
C\sum_{i=1}^{I_n}\left|\alpha_{n,i}\right|^2.
\end{equation}
Then the sequence
\begin{equation}\label{Coupled Equation 2.41}
g_{n,i}=L_n f_{n,i}
\quad\forall\ n\geq1,\ 1\leq i\leq I_n.
\end{equation}
forms a Riesz basis in $H.$}
\xqed{$\square$}
\end{prop}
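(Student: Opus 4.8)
The plan is to exhibit $\{g_{n,i}\}$ as the image of a Riesz basis of $H$ under a single bounded, boundedly invertible operator $T$, invoking the standard fact that a sequence is a Riesz basis of a Hilbert space exactly when it is the image of an orthonormal (equivalently, of any Riesz) basis under an isomorphism. The two hypotheses to assemble are \eqref{Coupled Equation 2.40}, which makes $\{f_{n,i}\}_{1\le i\le I_n}$ a Riesz basis of $X_n$ with constants independent of $n$, and \eqref{Coupled Equation 2.39}, which makes the $L_n$ uniformly two-sided bounded.

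First I would verify that the full collection $\{f_{n,i}\}_{n\ge1,\,1\le i\le I_n}$ is itself a Riesz basis of $H$. Expanding a finite combination as $x=\sum_n x_n$ with $x_n=\sum_i\alpha_{n,i}f_{n,i}\in X_n$, the Riesz-basis-of-subspaces property of $\{X_n\}$ gives $\|x\|^2\asymp\sum_n\|x_n\|^2$ and \eqref{Coupled Equation 2.40} gives $\|x_n\|^2\asymp\sum_i|\alpha_{n,i}|^2$, both with $n$-independent constants. Chaining them produces $\|x\|^2\asymp\sum_{n,i}|\alpha_{n,i}|^2$, the Riesz-basis inequality for $\{f_{n,i}\}$; totality in $H$ is inherited, since $\{X_n\}$ is total in $H$ and each $\{f_{n,i}\}$ is total in $X_n$.

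Next I would build the transfer operator. On $H=\bigoplus_n X_n$ set $T\big(\sum_n x_n\big)=\sum_n L_n x_n$. Combining \eqref{Coupled Equation 2.39}, the bounds for $\{X_n\}$, and the Riesz-sequence bounds for $\{Y_n\}$ yields, for $x=\sum_n x_n$,
\[
\left\|T x\right\|^2 \;\asymp\; \sum_n \left\|L_n x_n\right\|^2 \;\asymp\; \sum_n \left\|x_n\right\|^2 \;\asymp\; \left\|x\right\|^2 ,
\]
with all constants independent of $n$; hence $T$ is well defined, bounded and bounded below, so it is an isomorphism of $H$ onto its range. That range is closed and, because $T$ restricts to $L_n$ on $X_n$ with $L_n(X_n)=Y_n$, it equals $\overline{\mathrm{span}}\{Y_n\}$. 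Since $f_{n,i}\in X_n$, one has $Tf_{n,i}=L_nf_{n,i}=g_{n,i}$.

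It then follows that $\{g_{n,i}\}=\{Tf_{n,i}\}$, being the image of the Riesz basis $\{f_{n,i}\}$ under the isomorphism $T$, is a Riesz basis of $\overline{\mathrm{span}}\{Y_n\}$. The one genuinely delicate step -- the main obstacle -- is the identification $\overline{\mathrm{span}}\{Y_n\}=H$, equivalently the surjectivity of $T$, i.e. the \emph{totality} of $\{Y_n\}$ in $H$; this is precisely what upgrades the Riesz-\emph{sequence} hypothesis on $\{Y_n\}$ to a Riesz-\emph{basis} conclusion and does not follow from the $L_n$ alone (for instance $L_n$ could map $X_n$ isomorphically onto $Y_n=X_{n+1}$, meeting \eqref{Coupled Equation 2.39}--\eqref{Coupled Equation 2.40} while $\{Y_n\}$ omits $X_1$). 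Once this totality is secured, $\{g_{n,i}\}$ is a Riesz basis of $H$, as claimed.
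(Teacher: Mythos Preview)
The paper does not prove this proposition; it is quoted as Lemma~3.1 of \cite{RaoLoreti01} and closed immediately with a $\square$, so there is no proof in the paper to compare against.

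Your strategy is the standard one and is presumably what the cited reference does: assemble $\{f_{n,i}\}$ into a Riesz basis of $H$ by chaining the subspace bounds for $\{X_n\}$ with \eqref{Coupled Equation 2.40}, then build the block-diagonal operator $T=\bigoplus_n L_n$ and use \eqref{Coupled Equation 2.39} together with the Riesz bounds for $\{X_n\}$ and $\{Y_n\}$ to show $T$ is bounded and bounded below. You are also right to flag the surjectivity of $T$ (equivalently, the totality of $\{Y_n\}$) as a point where the hypotheses, read literally, are not sufficient: your shift example $Y_n=X_{n+1}$ satisfies every displayed condition yet produces a family $\{g_{n,i}\}$ that is only a Riesz basis of a proper closed subspace. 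In the paper's actual applications (Propositions~\ref{Coupled Theorem 2.7} and~\ref{Coupled Theorem 2.14new}) this issue does not bite, because the matrices $L_n$ are perturbations of a fixed invertible matrix mapping $W_n$ onto $V_n$ with $V_n\subset W_n$ of equal finite dimension, so $V_n=W_n$ and totality is automatic; but as a self-contained statement the proposition needs either the extra hypothesis that $\{Y_n\}$ is total in $H$, or an interpretation of ``Riesz sequence of subspaces'' that already includes completeness.
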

\noindent\textbf{Proof of Proposition  \ref{Coupled Theorem 2.7}.} First, we prove that $\left\{W_n\right\}_{n\geq1}$, defined in \eqref{eq:Wn}, is a  Riesz basis of subspaces of $\mathcal{H}$. Let $$U=\left(u,v,y,z\right)^{\top}\in\mathcal{H}.$$ Since $\left(e_n\right)_{n\geq1}$ is a Hilbert basis of $H$, then 
\begin{equation*}
u=\sum_{n\geq1}u_n e_n
\quad 
v=\sum_{n\geq1}v_n e_n
\quad 
y=\sum_{n\geq1}y_n e_n
\quad 
z=\sum_{n\geq1}z_n e_n.
\end{equation*}
Hence, 
\begin{equation*}
\begin{array}{llll}
U
&=
\displaystyle{\sum_{n\geq1}\left(u_ne_n,v_ne_ny_ne_n,z_ne_n\right)^{\mathsf{T}}}

\\ \\ 

&=
\displaystyle{\frac{1}{\sqrt{2}}\sum_{n\geq1}\big[\left( v_n+  i   \mu_nu_n\right)E_{1,n}^{+}+\left( v_n-  i   \mu_nu_n\right)E_{1,n}^{-}+\left( z_n+  i   \mu_ny_n\right)E_{2,n}^{+}+\left( z_n-  i   \mu_ny_n\right)E_{2,n}^{-}\big]}.

\end{array}
\end{equation*}
Therefore, for every $U\in\mathcal{H}$, $U$ can be  written as $\displaystyle{\sum_{n\geq1}}U_n$ with $U_n\in W_n$. Moreover, for every pair of different positive integers $(n,m)$, we have that $U_n$ and $U_{m}$ are perpendicular since $W_n$ and $W_{m}$ are.
Therefore $\left\|U\right\|^2_\HH=\displaystyle{\sum_{n\geq1}}\left\|U_n\right\|^2$ and $U$ can be uniquely written  as $\displaystyle{\sum_{n\geq1}}U_n$. This yields that  $\left\{W_n\right\}_{n\geq1}$ is a Riesz basis of subspaces in $\mathcal{H}.$
We prove similarly that $\left\{V_n\right\}_{n\geq1}$ and  $\left\{\tilde{V}_n\right\}_{n\geq1}$ form a Riesz sequence of subspaces in $\mathcal{H}$.
Next, we divide the proof into three cases: $0<\alpha^2<\frac{1}{4}$, 
$ \alpha^2=\frac{1}{4}$ and If $\alpha^2>\frac{1}{4}$. Since the argument is entirely similar for the three cases, we only provide one of them.\\[0.1in]
If $\alpha^2\neq \frac{1}{4}$, then from Corollary \ref{Coupled Lemma 2.5}, we remark that $L_n$ has a constant leading term which is invertible. This, together with the fact that $L_n$ is invertible for every $n\geq 1$, implies  condition \eqref{Coupled Equation 2.39}. Moreover,  the condition \eqref{Coupled Equation 2.40} is satisfied since $\{E_{1,n}^+,\ E_{1,n}^-,\ E_{2,n}^+,\ E_{2,n}^-\}$ forms a  Hilbert basis in the subspace $W_n.$ Then applying Proposition  \ref{Chapter pr-57}, we obtain that the system of eigenvectors $\left\{e_{1,n}^+,e_{1,n}^-,e_{2,n}^+,e_{2,n}^-\right\}_{n\geq1}$ forms a Riesz basis in $\mathcal{H}$. Thus, the proof is  complete.
\xqed{$\square$}\\[0.1in]
\noindent \textbf{Proof of Theorem \ref{Coupled Theorem}.} From Lemma \ref{Coupled Lemma 2.4}, the large eigenvalues $\left\{\lambda_{k,n}^{\pm}\right\}_{k=1,2,\ n\geq n_0}$ of $\mathcal{A}$  satisfy the following estimation
\begin{equation*}
\Re\left\{\lambda_{k,n}^{\pm}\right\}
=\left\{\begin{array}{ll}
\displaystyle{-\frac{1}{4}\pm \frac{1}{4}\sqrt{1-4\alpha^2}+O\left(\frac{1}{\mu_n}\right)},&\text{if } \displaystyle{0<\alpha^2<\frac{1}{4}},\\ \\
\displaystyle{-\frac{1}{4}+O\left(\frac{1}{\mu_n}\right)},&\text{if } \displaystyle{\alpha^2\geq\frac{1}{4}},
\end{array}
\right.\ \ \ \text{and}\ \ \ 
\left|\text{Im}\left\{\lambda_{k,n}^{\pm}\right\}\right|\geq\mu_n.
\end{equation*}
In addition to that, from Proposition \ref{Coupled Theorem 2.7}, the system of eigenvectors and root vectors of $\mathcal{A}$ forms a Riesz basis of $\mathcal{H}$. Then,  applying  Proposition \ref{Chapter pr-43}, we get that System \eqref{Coupled Equation 2.1} is exponentially stable. Thus, the proof is  complete.
\xqed{$\square$}
\section{Polynomial stability}\label{Coupled Section 4}
\noindent In this Section,  we consider the remaining cases when $a=1$ and $\gamma<0$ or when $a\neq1$ and $\gamma\leq0$. In these cases, we prove that System  \eqref{Coupled Equation 2.1} is polynomially stable. More precisely, we find the optimal polynomial decay rate.  Our main result in this Section is the following theorem.
\begin{thm}\label{Coupled Theorem 2.8}
There exists a positive constant $C>0$ such that for every $u_0\in D\left(\mathcal{A}\right)$, the energy of System \eqref{Coupled Equation 2.1} has the  polynomial decay rate
\begin{equation}\label{Coupled Equation 2.42}
E\left(t\right)
\leq 
\frac{C}{t^{\delta\left(\gamma\right)}}\left\|\mathcal{A} u_0\right\|^2_{\mathcal{H}},
\quad \forall\ t>0,
\end{equation}
 where
\begin{equation*}
\delta\left(\gamma\right)=\left\{
\begin{array}{lll}

-\frac{1}{\gamma},&   \text{if }   a=1\text{ and } \gamma<0, 

                   \\ \\ 
 
\frac{1}{1-\gamma}, & \text{if }   a\neq1\text{ and } \gamma\leq0.

\end{array}
\right.
\end{equation*}
In addition, the energy decay rate in \eqref{Coupled Equation 2.42}  is optimal according to Definition \ref{Chapter pr-35}.
\end{thm}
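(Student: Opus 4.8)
The plan is to carry out the usual spectral program for each remaining regime of the pair $(a,\gamma)$, exactly parallel to Section 3 but now producing polynomial rather than exponential decay, and to invoke Proposition~\ref{Chapter pr-45} once the spectral asymptotics are in hand. So the first step is to establish, in each of the two regimes ($a=1$, $\gamma<0$) and ($a\neq1$, $\gamma\leq0$), the asymptotic expansions of the four roots $\lambda_{1,n}^\pm,\lambda_{2,n}^\pm$ of the characteristic equation \eqref{charact eq} as $n\to\infty$; these are the contents of Lemmas \ref{Coupled Lemma 2.9} and \ref{Coupled Lemma 2.12} announced in the introduction. Concretely, by the rescaling $Z_n=\lambda_n/\mu_n$ from Lemma~\ref{Coupled Lemma 2.1} one knows $|Z_n|$ is bounded above and below, so each root lies near one of the four roots $\pm i,\pm i\sqrt a$ of the limit polynomial $g$; plugging the ansatz $\lambda_n = \pm i\sqrt a\,\mu_n + c_0 + c_1\mu_n^{-1}+\cdots$ (resp.\ $\pm i\mu_n + \cdots$) into \eqref{charact eq} and matching orders of $\mu_n$ gives the real parts: for $a=1,\gamma<0$ one finds $\Re\lambda_{j,n}^\pm \sim -\tfrac12\mu_n^{2\gamma}$ for both branches (the damping term $\lambda\mu_n^{2\gamma}$ now being the leading perturbation), while for $a\neq1$ the first branch clusters near $\pm i\sqrt a\,\mu_n$ with $\Re\lambda_{1,n}^\pm\sim -\tfrac12\mu_n^{2\gamma}$ and the second near $\pm i\mu_n$ with $\Re\lambda_{2,n}^\pm$ of order $\mu_n^{2\gamma-2}$, the gap $|a-1|$ appearing in the denominator.

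Second, I would establish the Riesz-basis property of the eigenvectors (and root vectors, should any double eigenvalues occur) in each regime, i.e.\ the analogues of Lemma~\ref{eigenvectors}, Lemma~\ref{Coupled Lemma 2.4a}, Corollary~\ref{Coupled Lemma 2.5} and Proposition~\ref{Coupled Theorem 2.7} for the present cases --- these are Proposition~\ref{Coupled Theorem 2.14new} alluded to in the text. The mechanism is identical: compute the eigenvectors explicitly from \eqref{Coupled Equation 2.4}--\eqref{Coupled Equation 2.5}, show that after suitable normalization the change-of-basis matrices $L_n$ from the decoupled eigenvectors $E_{1,n}^\pm,E_{2,n}^\pm$ (as in \eqref{Coupled Equation 2.34}) converge to an invertible constant matrix, and apply Proposition~\ref{Chapter pr-57}. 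The one point requiring a little care is that when $a\neq1$ the two branches have genuinely different imaginary parts $\sqrt a\,\mu_n$ versus $\mu_n$, so the natural decoupled reference vectors are precisely the ones displayed in \eqref{Coupled Equation 2.34}; the leading block of $L_n$ is then (up to the coupling-induced corrections, which are $o(1)$) block-diagonal and clearly invertible.

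Third, with the spectral asymptotics and the Riesz basis in hand, I would apply Proposition~\ref{Chapter pr-45} with $K=2$ branches. For $a=1,\gamma<0$ we may take $\mu_{1,n}=\mu_{2,n}=\mu_n$, $\alpha_1=\alpha_2=-2\gamma$ (note $-2\gamma>0$), $\beta_k=\tfrac14$, and both the upper bounds $\Re\lambda_{k,n}\le -\beta_k\mu_n^{-\alpha_k}$, $|\Im\lambda_{k,n}|\ge\mu_n$ and the matching lower bounds $\Re\lambda_{k,n}\ge -c_1\mu_n^{-\alpha_k}$, $|\Im\lambda_{k,n}|\le c_2\mu_n$ hold from the expansions; hence $\delta=1/\alpha_1 = -1/(2\gamma)$, and since $E(t)=\tfrac12\|e^{t\mathcal A}u_0\|_{\mathcal H}^2$, \eqref{Coupled Equation 2.42} follows with $\delta(\gamma)=2\delta=-1/\gamma$ --- matching the claimed exponent (one uses $\theta=1$ in the proposition and squares). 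For $a\neq1,\gamma\le0$ the slowly-decaying branch governs the rate: $\alpha_1=-2\gamma$ for the first branch but $\alpha_2=2-2\gamma$ for the second, so $\delta=\min\{1/(-2\gamma),\,1/(2-2\gamma)\} = 1/(2-2\gamma)$ (when $\gamma=0$ this is $1/2$), giving $\delta(\gamma)=2\delta=1/(1-\gamma)$; the two-sided bounds again come directly from Lemma~\ref{Coupled Lemma 2.12}, so Proposition~\ref{Chapter pr-45} delivers \emph{optimality} in the sense of Definition~\ref{Chapter pr-35}, completing the proof.

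The genuinely delicate step is the second-branch asymptotics when $a\neq1$: one must push the expansion of $\lambda_{2,n}^\pm$ to the order $\mu_n^{2\gamma-2}$ to see the correct (small) real part, because the naive leading-order balance only shows $\Re\lambda_{2,n}\to 0$ without pinning down its rate, and it is precisely this rate that fixes the optimal exponent $1/(1-\gamma)$. Here the fact that the speeds differ is essential --- the factor $(a-1)$ sits in the denominator of the correction $-\alpha^2/((a-1)\mu_n)$ to the imaginary part and, squared, in the real-part coefficient --- so the computation degenerates exactly as $a\to1$, consistent with the jump to exponential decay at $a=1,\gamma=0$. Everything else (the Riesz-basis bookkeeping, the possible appearance of double roots and their root vectors, the passage from the semigroup bound to the energy bound) is routine and follows the template already executed in Section~3.
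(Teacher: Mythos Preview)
Your proposal is correct and follows essentially the same approach as the paper: establish the eigenvalue asymptotics in each regime (the paper's Lemmas~\ref{Coupled Lemma 2.9} and~\ref{Coupled Lemma 2.12}, the latter via the auxiliary Lemmas~\ref{Coupled Lemma 2.10}--\ref{Coupled Lemma 2.11}), prove the Riesz-basis property of the eigenvectors by comparing to the decoupled system via a convergent change-of-basis matrix (Corollary~\ref{Coupled Lemma 2.13} and Proposition~\ref{Coupled Theorem 2.14new}), and then invoke Proposition~\ref{Chapter pr-45}. One small numerical slip: for $a=1,\gamma<0$ the real part is $-\tfrac14\mu_n^{2\gamma}$, not $-\tfrac12\mu_n^{2\gamma}$ (you in fact use $\beta_k=\tfrac14$ later), and in this regime no root vectors arise---the paper's $L_n$ has invertible limit $\tfrac{1}{\sqrt2}\bigl(\begin{smallmatrix}1&-i\\-i&1\end{smallmatrix}\bigr)\otimes I_2$ for $a=1$ and $I_4$ for $a\neq1$---but none of this affects the strategy or the optimal exponents you obtain.
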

\noindent For the  proof of  Theorem \ref{Coupled Theorem 2.8}, we need, as in the previous case, to study the asymptotic behavior  of the eigenvalues  $\lambda_{1,n}^{\pm},\  \lambda_{2,n}^{\pm}$ and  the  corresponding eigenvectors  $e_{1,n}^{\pm},\ e_{2,n}^{\pm}$.  We start with the case when $a=1$ and  $\gamma<0$.
\begin{lem}\label{Coupled Lemma 2.9}
{Suppose that $a=1$ and $\gamma<0$.  Let $N\in\mathbb{N}$ be the integer part of $\frac12-\gamma$, i.e., the unique integer such that 
             $$2N\leq1-2\gamma{\color{black}< 2N+2}.$$ 
Then the  eigenvalues $\lambda_{1,n}^{\pm},\ \lambda_{2,n}^{\pm}$
of System \eqref{Coupled Equation 2.1} satisfy the following asymptotic expansions
\begin{equation}\label{Coupled Equation 2.43}
\lambda_{1,n}^{\pm}=\left\{
\begin{array}{lll}

\displaystyle{\pm  i   \mu_n+\frac{  i   \alpha}{2}-\frac{1}{4 \mu_n^{-2\gamma}}+O\left(\frac{1}{\mu_n^{\min\left(1,-4\gamma\right)}}\right)}&\text{if }0<-2\gamma<1,

             \\ \\

\displaystyle{\pm  i   \mu_n+\frac{  i   \alpha}{2}\pm  i    \sum_{k=1}^{N}\frac{\alpha_{k}}{\mu_n^{2k-1}}-\frac{1}{4\mu_n^{-2\gamma}}+O\left(\frac{1}{\mu_n^{\min\left(2N+1,1-2\gamma\right)}}\right)}&\text{if }-2\gamma\geq1,

\end{array}
\right.
\end{equation}
\begin{equation}\label{Coupled Equation 2.44}
\lambda_{2,n}^{\pm}=\left\{
\begin{array}{lll}

\displaystyle{\pm  i   \mu_n-\frac{  i   \alpha}{2}-\frac{1}{4\mu_n^{-2\gamma}}+O\left(\frac{1}{\mu_n^{\min\left(1,-4\gamma\right)}}\right)}& \text{if }0<-2\gamma<1,

               \\ \\

\displaystyle{\pm  i   \mu_n-\frac{  i   \alpha}{2}\pm   i   \sum_{k=1}^{N}\frac{\alpha_{k}}{\mu_n^{2k-1}}-\frac{1}{4\mu_n^{-2\gamma}}+O\left(\frac{1}{\mu_n^{\min\left(2N+1,1-2\gamma\right)}}\right)}&  \text{if }-2\gamma\geq1,

\end{array}
\right.
\end{equation}
where $\alpha_1=\dfrac{\alpha^2}{8}$ and $\alpha_k=\dfrac{\left(2k-3\right)!\alpha^{2k}\left(-1\right)^{k-1}}{2^{4k-2}\left(k-2\right)! k!},\ \forall\ k\geq2$.}
\end{lem}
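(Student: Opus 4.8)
The plan is to use the hypothesis $a=1$ to factor the characteristic equation \eqref{charact eq} explicitly into a product of two quadratics, and then to expand the four roots perturbatively in the two small parameters $\mu_n^{-1}$ and $\mu_n^{2\gamma}$.

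Setting $a=1$ in \eqref{charact eq} and expanding yields the quartic $\lambda^{4}+\mu_n^{2\gamma}\lambda^{3}+(2\mu_n^{2}+\alpha^{2})\lambda^{2}+\mu_n^{2\gamma+2}\lambda+\mu_n^{4}=0$. A direct check shows that its left-hand side equals $(\lambda^{2}+p_n\lambda+\mu_n^{2})(\lambda^{2}+r_n\lambda+\mu_n^{2})$ exactly when $p_n+r_n=\mu_n^{2\gamma}$ and $p_nr_n=\alpha^{2}$; hence $p_n$ and $r_n$ are the two roots of $t^{2}-\mu_n^{2\gamma}t+\alpha^{2}=0$, i.e. $p_n,r_n=\tfrac12\bigl(\mu_n^{2\gamma}\pm i\sqrt{4\alpha^{2}-\mu_n^{4\gamma}}\bigr)$, which makes sense for $n$ large because $\gamma<0$ forces $\mu_n^{4\gamma}<4\alpha^{2}$ eventually. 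Each of the four eigenvalues attached to $\mu_n$ is then a root of $\lambda^{2}+p_n\lambda+\mu_n^{2}=0$ or of $\lambda^{2}+r_n\lambda+\mu_n^{2}=0$, that is, $\lambda=-\tfrac{p_n}{2}\pm i\mu_n\sqrt{1-p_n^{2}/(4\mu_n^{2})}$ and likewise with $r_n$. Since exchanging $p_n$ and $r_n$ amounts to exchanging the labels $\lambda_{1,n}$ and $\lambda_{2,n}$, I may assume $\alpha>0$.

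The core of the proof is the asymptotic expansion of $\sqrt{1-p_n^{2}/(4\mu_n^{2})}$ and of its analogue with $r_n$. Because $p_n$ solves $t^{2}-\mu_n^{2\gamma}t+\alpha^{2}=0$, one has $p_n^{2}=\mu_n^{2\gamma}p_n-\alpha^{2}$, so that $1-\tfrac{p_n^{2}}{4\mu_n^{2}}=1+\tfrac{\alpha^{2}}{4\mu_n^{2}}-\tfrac{\mu_n^{2\gamma-2}p_n}{4}$, and I would apply the binomial series $\sqrt{1+x}=\sum_{k\ge0}\binom{1/2}{k}x^{k}$. After multiplying by $i\mu_n$, the ``clean'' part $1+\tfrac{\alpha^{2}}{4\mu_n^{2}}$ produces the leading term $\pm i\mu_n$ together with the imaginary corrections $\pm i\sum_{k\ge1}\binom{1/2}{k}(\alpha^{2}/4)^{k}\mu_n^{1-2k}$; identifying $\binom{1/2}{k}(\alpha^{2}/4)^{k}$ with the stated $\alpha_k$ — in particular $\binom{1/2}{1}\tfrac{\alpha^{2}}{4}=\tfrac{\alpha^{2}}{8}=\alpha_1$ — is a routine simplification of binomial coefficients. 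Adding $-\tfrac{p_n}{2}$, whose real part is $-\tfrac14\mu_n^{2\gamma}=-\tfrac{1}{4\mu_n^{-2\gamma}}$ and whose imaginary part is $-\tfrac14\sqrt{4\alpha^{2}-\mu_n^{4\gamma}}=-\tfrac{\alpha}{2}+O(\mu_n^{4\gamma})$, and carrying out the same computation with $r_n$ (whose imaginary contribution to $-r_n/2$ is $+\tfrac{\alpha}{2}+O(\mu_n^{4\gamma})$), one reads off \eqref{Coupled Equation 2.43}--\eqref{Coupled Equation 2.44}, the $r_n$-quadratic yielding $\lambda_{1,n}^{\pm}$ and the $p_n$-quadratic yielding $\lambda_{2,n}^{\pm}$.

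The step I expect to be the main obstacle is the precise bookkeeping of the error term, which comes from three independent sources: (i) truncating the series $\pm i\sum_k\alpha_k\mu_n^{1-2k}$ after $k=N$ (with $N=0$, hence no such term, when $0<-2\gamma<1$), leaving a remainder of order $\mu_n^{-(2N+1)}$; (ii) the cross term $-\tfrac{\mu_n^{2\gamma-2}p_n}{4}$ and the higher-order terms of the binomial expansion, which, through $\operatorname{Im}p_n$, perturb $\operatorname{Re}\lambda$ at order $\mu_n^{2\gamma-1}=\mu_n^{-(1-2\gamma)}$ and perturb $\operatorname{Im}\lambda$ at strictly smaller order; and (iii) the $O(\mu_n^{4\gamma})$ error in $\sqrt{4\alpha^{2}-\mu_n^{4\gamma}}=2\alpha-\tfrac{\mu_n^{4\gamma}}{4\alpha}+\cdots$, perturbing the constant shift $\pm\tfrac{\alpha}{2}$ of $\operatorname{Im}\lambda$. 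Combining these gives an error $O\bigl(\mu_n^{-\min(2N+1,\,1-2\gamma,\,-4\gamma)}\bigr)$; the definition $N=\lfloor\tfrac12-\gamma\rfloor$, equivalently $2N\le1-2\gamma<2N+2$, is calibrated so that $1-2N\ge2\gamma>-1-2N$, i.e. so that the last retained correction is at least of order $\mu_n^{2\gamma}$ and the first omitted one is not, and one then checks that this minimum reduces to $\min(1,-4\gamma)$ when $0<-2\gamma<1$ (where $N=0$) and to $\min(2N+1,1-2\gamma)$ when $-2\gamma\ge1$ (since then $-4\gamma\ge1-2\gamma$), matching \eqref{Coupled Equation 2.43}--\eqref{Coupled Equation 2.44}. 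Together with the elementary observation that $\sqrt{1-p_n^{2}/(4\mu_n^{2})}$ has real part $1+O(\mu_n^{-2})$ and imaginary part $O(\mu_n^{2\gamma-2})$, so that $\operatorname{Re}\lambda$ is governed by $-\operatorname{Re}p_n/2$ up to the claimed error, the remainder of the argument is an explicit if lengthy computation, consistent in particular with the bound $m\le|\lambda_n/\mu_n|\le M$ of Lemma \ref{Coupled Lemma 2.1}.
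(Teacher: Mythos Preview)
Your proposal is correct and follows essentially the same route as the paper: both factor the quartic \eqref{charact eq} (with $a=1$) into the two quadratics $\lambda^{2}+p_n\lambda+\mu_n^{2}=0$ and $\lambda^{2}+r_n\lambda+\mu_n^{2}=0$ with $p_n,r_n=\tfrac12\bigl(\mu_n^{2\gamma}\pm i\sqrt{4\alpha^{2}-\mu_n^{4\gamma}}\bigr)$, solve them, and expand via the binomial series, splitting into the cases $0<-2\gamma<1$ and $-2\gamma\ge1$. Your use of the relation $p_n^{2}=\mu_n^{2\gamma}p_n-\alpha^{2}$ to rewrite $1-p_n^{2}/(4\mu_n^{2})$ before expanding is a mild streamlining of the paper's more explicit real/imaginary decomposition of the discriminant, but the strategy and the resulting error bookkeeping are the same.
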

\begin{proof} 
Assume  that $a=1$ and $\gamma< 0$. For $n$ large enough, from \eqref{charact eq}, we get that 
\begin{equation}\label{Coupled Equation 2.15.33}
\lambda^2_{1,n}+\frac{\mu^{2\gamma}_n-i\sqrt{4\alpha^2-\mu^{4\gamma}_n}}{2}\lambda_{1,n}+\mu_n^2=0
\ \ \ \text{and}\ \ \ 
\lambda^2_{2,n}+\frac{\mu^{2\gamma}_n+i\sqrt{4\alpha^2-\mu^{4\gamma}_n}}{2}\lambda_{2,n}+\mu_n^2=0.
\end{equation}
Solving Equation \eqref{Coupled Equation 2.15.33}, we obtain
\begin{equation}\label{branch1-case a=1, gamma<0}
\begin{array}{ll}

\displaystyle{\lambda_{1,n}^\pm=\frac{1}{4}\left(-\mu_n^{2\gamma}\mp\sqrt{2
{{\sqrt{16\mu^4_n+4(2\alpha^2-\mu_n^{4\gamma})\mu^2_n +\alpha^4}-8\mu^2_n-2\alpha^2+\mu_n^{4\gamma}}}}\right)  }\\ \\ \hspace{3cm} \displaystyle{+\frac{i}{4}\left(\sqrt{4\alpha^2-\mu_n^{4\gamma}}\pm\sqrt{2
{{\sqrt{16\mu^4_n+4(2\alpha^2-\mu_n^{4\gamma})\mu^2_n +\alpha^4}+8\mu^2_n+2\alpha^2-\mu_n^{4\gamma}}}}\right)}

\end{array} 
\end{equation}
and
\begin{equation}\label{branch2-case a=1, gamma<0}
\begin{array}{ll}

\displaystyle{\lambda_{2,n}^\pm=\frac{1}{4}\left(-\mu_n^{2\gamma}\mp\sqrt{2
{{\sqrt{16\mu^4_n+4(2\alpha^2-\mu_n^{4\gamma})\mu^2_n +\alpha^4}-8\mu^2_n-2\alpha^2+\mu_n^{4\gamma}}}}\right)  }\\ \\ \hspace{3cm} \displaystyle{+\frac{i}{4}\left(-\sqrt{4\alpha^2-\mu_n^{4\gamma}}\pm\sqrt{2
{{\sqrt{16\mu^4_n+4(2\alpha^2-\mu_n^{4\gamma})\mu^2_n +\alpha^4}+8\mu^2_n+2\alpha^2-\mu_n^{4\gamma}}}}\right)}.

\end{array} 
\end{equation}
We have
\begin{equation}\label{kp1}
\sqrt{2\sqrt{16\mu^4_n+4(2\alpha^2-\mu_n^{4\gamma})\mu^2_n +\alpha^4}-8\mu^2_n-2\alpha^2+\mu_n^{4\gamma}}=O\left(\frac{1}{\mu_n^{1-2\gamma}}\right),
\end{equation}
\begin{equation}\label{kp2}
\sqrt{4\alpha^2-\mu_n^{4\gamma}}=2\alpha+O\left(\frac{1}{\mu_n^{-4\gamma}}\right)
\end{equation}
and
\begin{equation}\label{kp3}
\begin{array}{ll}

\displaystyle{\sqrt{\sqrt{16\mu^4_n+4(2\alpha^2-\mu_n^{4\gamma})\mu^2_n +\alpha^4}+8\mu^2_n+2\alpha^2-\mu_n^{4\gamma}}}\\ \\
\displaystyle{=2\sqrt{2}\mu_n\sqrt{\sqrt{\left(1+\frac{\alpha^2}{4\mu^2_n}\right)^2+O\left(\frac{1}{\mu_n^{2-4\gamma}}\right)}+1+\frac{\alpha^2}{4\mu^2_n}+O\left(\frac{1}{\mu_n^{2-4\gamma}}\right)}}\\ \\ \displaystyle{=4 \mu_n \sqrt{1+\frac{\alpha^2}{4\mu_n^2}+O\left(\frac{1}{\mu_n^{2-4\gamma}}\right)}}.

\end{array}
\end{equation}
We next proceed by studying two cases.
\\[0.1in] 
\noindent\textbf{Case 1.} If $0<-2\gamma<1$, 
 then $N=0$ and, from \eqref{kp3}, we obtain 
\begin{equation}\label{Coupled Equation 2.52}
4 \mu_n \sqrt{1+\frac{\alpha^2}{4\mu_n^2}+O\left(\frac{1}{\mu_n^{2-4\gamma}}\right)}=4 \mu_n \sqrt{1+O\left(\frac{1}{\mu_n^{2}}\right)}=
  4\mu_n+O\left(\frac{1}{\mu_n}\right).
\end{equation}
Inserting \eqref{kp1}, \eqref{kp2}, \eqref{kp3} and \eqref{Coupled Equation 2.52} in \eqref{branch1-case a=1, gamma<0} and \eqref{branch2-case a=1, gamma<0}, we get
\begin{equation}\label{Coupled Equation 2.53}
\lambda_{1,n}^\pm=\pm  i   \mu_n+\frac{  i   \alpha}{2}-\frac{1}{4\mu_n^{-2\gamma}}+O\left(\frac{1}{\mu_n^{\min\left(1,-4\gamma\right)}}\right),
\end{equation}
and
\begin{equation}\label{Coupled Equation 2.54}
\lambda_{2,n}^\pm=\pm  i   \mu_n-\frac{  i   \alpha}{2}-\frac{1}{4\mu_n^{-2\gamma}}+O\left(\frac{1}{\mu_n^{\min\left(1,-4\gamma\right)}}\right).
\end{equation}
\\[0.1in] \noindent\textbf{Case 2.} If $-2\gamma\geq 1$, then  from \eqref{Coupled Equation 2.53} and \eqref{Coupled Equation 2.54}, we need to increase the order of the finite expansion. Consider  the integer part $N\in\mathbb{N}$ of $\frac12-\gamma$, which is positive.
Setting  $x=\frac{\alpha^2}{4\mu_n^2}+O\left(\frac{1}{\mu_n^{2-4\gamma}}\right)$,  we have
\begin{equation}\label{Coupled Equation 2.55}
\left(1+x\right)^{\frac{1}{2}}=1+\sum_{k=1}^N{\beta_k}{x^k}+O\left(x^{N+1}\right),
\end{equation}
where
\begin{equation*}
\beta_1=\frac{1}{2}
\ \ \ \text{and}\ \ \ 
\beta_k=\frac{\left(2k-3\right)!\left(-1\right)^{k-1}}{2^{2k-2}\left(k-2\right)! k!},\ \forall\ k\geq2.
\end{equation*}
Therefore,
\begin{equation*}
\left(1+\frac{\alpha^2}{4\mu_n^2}+O\left(\mu^{4\gamma-2}_n\right)\right)^{\frac{1}{2}}
=
1+\sum_{k= 1}^N\beta_k\left(\frac{\alpha^2}{4\mu_n^{2}}\right)^k+O\left(\frac{1}{\mu^{2-4\gamma}_n}\right)+O\left(\frac{1}{\mu_n^{2N+2}}\right).
\end{equation*}
Taking $\alpha_k=\displaystyle{\frac{\alpha^{2k}\beta_k}{2^{2k}}}$ for every positive integer $k$ and noticing that $2N+2\leq 2-4\gamma$, we obtain
\begin{equation}\label{Coupled Equation 2.56}
\left(1+\frac{\alpha^2}{4\mu_n^2}+O\left(\mu^{4\gamma-2}_n\right)\right)^{\frac{1}{2}}
=
1+\sum_{k=1}^N\frac{\alpha_k}{\mu_n^{2k}}+O\left(\frac{1}{\mu_n^{2N+2}}\right).
\end{equation}
Inserting \eqref{Coupled Equation 2.56} in \eqref{kp3}, we get
\begin{equation}\label{kp3new}
\frac{1}{4}\sqrt{\sqrt{16\mu^4_n+4(2\alpha^2-\mu_n^{4\gamma})\mu^2_n +\alpha^4}+8\mu^2_n+2\alpha^2-\mu_n^{4\gamma}}=\mu_n+\sum_{k=1}^N\frac{\alpha_k}{\mu_n^{2k-1}}+O\left(\frac{1}{\mu_n^{2N+1}}\right).
\end{equation}
Substituting \eqref{kp1}, \eqref{kp2} and \eqref{kp3new} in \eqref{branch1-case a=1, gamma<0},  then using the condition $1-2\gamma\leq2N+2$, we obtain
\begin{equation}\label{Coupled Equation 2.59}
\lambda_{1,n}^\pm
=-\frac{1}{4\mu_n^{-2\gamma}}+  i   \left(\pm\mu_n+\frac{\alpha}{2}\pm \sum_{k= 1}^N\frac{\alpha_k}{\mu_n^{2k-1}}\right)+O\left(\frac{1}{\mu_n^{\min\left(2N+1,1-2\gamma\right)}}\right).
\end{equation}
 Then, from \eqref{Coupled Equation 2.53} and \eqref{Coupled Equation 2.59}, we get \eqref{Coupled Equation 2.43}. On the other hand, inserting \eqref{kp1}, \eqref{kp2} and \eqref{kp3new} in \eqref{Coupled Equation 2.15.33},  then using the condition $1-2\gamma\leq2N+2$, we get
 \begin{equation}\label{Coupled Equation 2.60}
\lambda_{2,n}^\pm
=-\frac{1}{4\mu_n^{-2\gamma}}+  i   \left(\pm\mu_n-\frac{\alpha}{2}\pm \sum_{k= 1}^N\frac{\alpha_k}{\mu_n^{2k-1}}\right)+O\left(\frac{1}{\mu_n^{\min\left(2N+1,1-2\gamma\right)}}\right).
\end{equation}
Finally, from \eqref{Coupled Equation 2.54} and \eqref{Coupled Equation 2.60}, we obtain \eqref{Coupled Equation 2.44}. Thus, the proof is  complete.
\end{proof}
\noindent Before we study the asymptotic behavior of the eigenvalues in case $a\neq 1$ and $\gamma\leq0$, we prove the following lemma. 
\noindent We now study the asymptotic behavior of the eigenvalues in the case when $a\neq 1$ and $\gamma\leq0$. We prove the following lemma.
\begin{lem}\label{Coupled Lemma 2.12}
{Suppose that $a\neq 1$ and $\gamma\leq0$. Let $N\in\mathbb{N}$ be the integer part of $\frac12-\gamma$. 
Then, we have
\begin{equation}\label{Coupled Equation 2.113}
\lambda_{1,n}^\pm=\left\{\begin{array}{lll}

\displaystyle{\pm  i   \sqrt{a}\mu_n-\frac{1}{2\mu_n^{-2\gamma}}+O\left(\frac{1}{\mu_n}\right)}&\text{if } 0\leq-2\gamma<1,

                 \\ \\

\displaystyle{\pm i\sqrt{a}\mu_n\pm i\sqrt{a}\sum_{\ell=1}^{N}\frac{C_1\left(\ell\right)}{\mu_n^{2\ell-1}}-\frac{1}{2\mu_n^{-2\gamma}}+O\left(\frac{1}{\mu_n^{2N+1}}\right)}& \text{if } -2\gamma\geq1

\end{array}
\right.
\end{equation}
and
\begin{equation}\label{Coupled Equation 2.114}
\lambda_{2,n}^\pm=\left\{\begin{array}{lll}

\displaystyle{\pm i\mu_n\mp\frac{i\alpha^2}{\left( a-1\right)\mu_n} -
\frac{\alpha^2}{2\left(a-1\right)^2\mu_n^{2-2\gamma}}+O\left(\frac{1}{\mu_n^{3}}\right)}&\text{if } 0\leq-2\gamma<1,

                     \\ \\
                     
\displaystyle{\pm i\mu_n\pm i\sum_{\ell=1}^{N+1}\frac{C_2\left(\ell\right)}{\mu_n^{2\ell-1}}-\frac{\alpha^2}{2\left(a-1\right)^2\mu_n^{2-2\gamma}}+O\left(\frac{1}{\mu_n^{2N+3}}\right)}& \text{if } -2\gamma\geq1,

\end{array}
\right.
\end{equation}
where, for $1\leq\ell\leq N+1$,   $C_1\left(\ell\right)$ and $C_2\left(\ell\right)$ are real numbers depending only on $a$ and $\alpha$. }
\end{lem}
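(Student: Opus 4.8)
The plan is to solve the characteristic equation \eqref{charact eq} perturbatively as $n\to\infty$, exactly as was done in Lemmas \ref{Coupled Lemma 2.4} and \ref{Coupled Lemma 2.9}, but now keeping $a\neq1$. First I would rewrite \eqref{charact eq} as a quartic in $\lambda$, group the terms, and observe that the highest-order balance splits into two regimes: one in which $\lambda^2\sim -a\mu_n^2$ (the ``fast'' branch, coming from the first equation whose elastic coefficient is $a$) and one in which $\lambda^2\sim -\mu_n^2$ (the ``slow'' branch, coming from the second, undamped, equation with coefficient $1$). Concretely, factoring out the leading behaviour, \eqref{charact eq} is close to $(\lambda^2+a\mu_n^2)(\lambda^2+\mu_n^2)=0$ when $\mu_n$ is large, and the genuine roots are perturbations of $\pm i\sqrt a\,\mu_n$ and $\pm i\mu_n$. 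Because $a\neq1$ these four nominal roots are asymptotically well separated (the ``different speeds'' hypothesis), so each is a simple root of the perturbed quartic and an implicit-function/Newton-polygon argument gives a convergent asymptotic expansion in powers of $\mu_n^{-1}$.

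For the first branch I would substitute $\lambda_{1,n}^\pm = \pm i\sqrt a\,\mu_n + r$ with $r=o(\mu_n)$ into \eqref{charact eq}, divide through by the dominant factor $\lambda^2+\mu_n^2$ evaluated near $\lambda=\pm i\sqrt a\mu_n$ (which is $\approx (a-1)\mu_n^2$, nonzero precisely because $a\neq1$), and obtain the reduced equation $\lambda^2+a\mu_n^2+\lambda\mu_n^{2\gamma}+(\text{lower order coupling terms})/\bigl((a-1)\mu_n^2\bigr)\approx 0$. The $\lambda\mu_n^{2\gamma}$ term contributes the real part: writing $\lambda=\pm i\sqrt a\mu_n+r$ and matching, the leading correction is $\Re(r)=-\tfrac12\mu_n^{2\gamma}$, which is the claimed $-1/(2\mu_n^{-2\gamma})$. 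When $-2\gamma<1$ the error is $O(\mu_n^{-1})$; when $-2\gamma\geq1$ one iterates the expansion, generating the finite sum $\pm i\sqrt a\sum_{\ell=1}^N C_1(\ell)\mu_n^{1-2\ell}$ with $N=\lfloor\tfrac12-\gamma\rfloor$ and remainder $O(\mu_n^{-(2N+1)})$, exactly as in Lemma \ref{Coupled Lemma 2.9}; the coefficients $C_1(\ell)$ come from expanding the square root / the implicit relation and depend only on $a$ and $\alpha$.

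For the second branch I would do the same with $\lambda_{2,n}^\pm=\pm i\mu_n+r$, now dividing \eqref{charact eq} by the factor $a\mu_n^2+\lambda^2+\lambda\mu_n^{2\gamma}$ evaluated near $\lambda=\pm i\mu_n$ (which is $\approx (a-1)\mu_n^2\neq0$). The reduced equation is $\lambda^2+\mu_n^2+\bigl(\alpha^2\lambda^2\bigr)/\bigl((a-1)\mu_n^2+\lambda^2+\lambda\mu_n^{2\gamma}\bigr)\approx0$, i.e. $\lambda^2+\mu_n^2 \approx -\alpha^2/(a-1) + O(\mu_n^{-2}) + O(\mu_n^{2\gamma-2})$. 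Solving, $\lambda=\pm i\mu_n\sqrt{1+\alpha^2/((a-1)\mu_n^2)+\cdots}$ gives the imaginary correction $\mp i\alpha^2/((a-1)\mu_n)$, and tracking the next order (where the damping term $\lambda\mu_n^{2\gamma}$ in the denominator first enters) produces the real part $-\alpha^2/\bigl(2(a-1)^2\mu_n^{2-2\gamma}\bigr)$. Again for $-2\gamma\geq1$ one pushes the expansion to order $N+1$, obtaining $\pm i\sum_{\ell=1}^{N+1}C_2(\ell)\mu_n^{1-2\ell}$ with remainder $O(\mu_n^{-(2N+3)})$ and $C_2(\ell)$ depending only on $a,\alpha$. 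Throughout, Lemma \ref{Coupled Lemma 2.1} guarantees $|\lambda_n|\asymp\mu_n$, which justifies that the two ``divide by a nonvanishing factor'' steps are legitimate and that there are no stray roots unaccounted for; the main obstacle is purely bookkeeping — organizing the nested square-root and geometric-series expansions so that the stated error exponents $\min(2N+1,\dots)$ and $2N+3$ come out correctly and the coefficients $C_1,C_2$ are manifestly independent of $n$ — rather than anything conceptual, since the non-resonance $a\neq1$ makes every root simple and the perturbation regular.
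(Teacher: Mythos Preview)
Your strategy is correct and coincides with the paper's at the conceptual level: both separate the quartic \eqref{charact eq} into a ``fast'' branch near $\lambda^2+a\mu_n^2=0$ and a ``slow'' branch near $\lambda^2+\mu_n^2=0$, using $a\neq1$ to ensure the two are asymptotically disjoint, and then expand each branch to the order needed for the real part to emerge.

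The paper organizes the computation slightly differently. Rather than ``divide by the other factor and iterate,'' it factors \eqref{charact eq} exactly by treating it as a quadratic in $\mu_n^2$: this produces the two equations \eqref{Coupled Equation 2.69}--\eqref{Coupled Equation 2.70} involving the square root
\[
P(\lambda,\mu_n)=\Bigl(1-\tfrac{4a\alpha^2}{(a-1)^2\lambda^2}-\tfrac{2}{(a-1)\lambda\mu_n^{-2\gamma}}+\tfrac{1}{(a-1)^2\lambda^2\mu_n^{-4\gamma}}\Bigr)^{1/2},
\]
which is then expanded in powers of $\lambda^{-1}$ (Lemma~\ref{Coupled Lemma 2.10}). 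Because the resulting reduced equations contain powers of $\lambda$ rather than of $\mu_n$, a separate bootstrap (Lemma~\ref{Coupled Lemma 2.11}) converts the sums $\sum D(k)\lambda^{-(2k-2)}$ into sums $\sum F_j(k)\mu_n^{-(2k-2)}$ before the final square-root expansion yields \eqref{Coupled Equation 2.113}--\eqref{Coupled Equation 2.114}. Your implicit-function/Newton iteration merges these two steps into one recursive substitution; the paper's route is more mechanical and makes the error exponents $2N+1$ and $2N+3$ fall out of explicit inequalities like $2N\leq 1-2\gamma<2N+2$, at the cost of the extra lemma. Either way the bookkeeping is, as you say, the only real work.
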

\noindent For the proof of Lemma \ref{Coupled Lemma 2.12}, we need the following lemmas.
\begin{lem}\label{Coupled Lemma 2.10}
{Suppose that $a\neq 1$ and $\gamma\leq0$.  Let $N\in\mathbb{N}$ be the integer part of $\frac12-\gamma$. 
 Then the  eigenvalues $\lambda_{1,n}^{\pm},\ \lambda_{2,n}^{\pm}$ satisfy the following asymptotic expansion
\begin{equation}\label{Coupled Equation 2.67}
\left\{
\begin{array}{lll}

\displaystyle{\lambda_{1,n}^\pm=\pm  i   \sqrt{a}\mu_n-\frac{1}{2\mu_n^{-2\gamma}}+O\left(\frac{1}{\mu_n}\right)}&\text{if }0\leq-2\gamma<1,

               \\ \\

\displaystyle{\frac{\left(\lambda^\pm_{1,n}\right)^2}{a}+\mu_n^2+\sum_{k=1}^{N}\frac{D(k)}{\left(\lambda^{\pm}_{1,n}\right)^{2k-2}}+\frac{  i   D_1^\pm}{\mu_n^{-1-2\gamma}}+O\left(\frac{1}{\mu_n^{2N}}\right)=0}&\text{if } -2\gamma\geq1

\end{array}
\right.
\end{equation}
and
\begin{equation}\label{Coupled Equation 2.68}
\left\{\begin{array}{lll}

\displaystyle{\lambda^\pm_{2,n}=\pm i\mu_n\mp\frac{i\alpha^2}{2\left( a-1\right)\mu_n} -
\frac{\alpha^2}{2\left(a-1\right)^2\mu_n^{2-2\gamma}}+O\left(\frac{1}{\mu_n^{3}}\right)}&\text{if }0\leq-2\gamma<1,

             \\ \\

\displaystyle{\left(\lambda^\pm_{2,n}\right)^2+\mu_n^2- \sum_{k=1}^{N+1}
\frac{D\left(k\right)}{\left(\lambda^\pm_{2,n}\right)^{2k-2}}+\frac{  i   D^\pm_2}{\mu_n^{1-2\gamma}}+O\left(\frac{1}{\mu_n^{2N+2}}\right)=0}&\text{if } -2\gamma\geq1,

\end{array}
\right.
\end{equation}
where 
\begin{equation*}
D_1^\pm=\pm\frac{1}{\sqrt{a}},
\quad 
D^\pm_2=\pm\frac{\alpha^2}{\left(a-1\right)^2},
\quad 
D(1)=\frac{\alpha^2}{a-1}
\end{equation*}
and for every $k\geq2$, we have
\begin{equation*}
D\left(k\right)
=
\frac{2a^{k-1}\alpha^{2k}\left(2k-3\right)!}{\left(a-1\right)^{2k-1}\left(k-2\right)! k!}.
\end{equation*}}
\end{lem}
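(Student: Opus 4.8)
\textbf{Proof proposal for Lemma \ref{Coupled Lemma 2.10}.} The plan is to start from the characteristic equation \eqref{charact eq}, written as the quartic
\begin{equation*}
P_n(\lambda):=\lambda^4+\mu_n^{2\gamma}\lambda^3+\big((a+1)\mu_n^2+\alpha^2\big)\lambda^2+\mu_n^{2\gamma+2}\lambda+a\mu_n^4,
\end{equation*}
and to exploit the two exact factorizations
\begin{equation*}
P_n(\lambda)=(\lambda^2+a\mu_n^2)\big(\lambda^2+\mu_n^2+\mu_n^{2\gamma}\lambda\big)-(a-1)\mu_n^{2\gamma+2}\lambda+\alpha^2\lambda^2=(\lambda^2+\mu_n^2)\big(\lambda^2+a\mu_n^2+\mu_n^{2\gamma}\lambda\big)+\alpha^2\lambda^2,
\end{equation*}
both checked by direct expansion (they rest on $\mu_n^{2\gamma}\lambda^3+\mu_n^{2\gamma+2}\lambda=\mu_n^{2\gamma}\lambda(\lambda^2+\mu_n^2)=\mu_n^{2\gamma}\lambda(\lambda^2+a\mu_n^2)-(a-1)\mu_n^{2\gamma+2}\lambda$). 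Since $P_n(0)=a\mu_n^4\neq0$, no root vanishes, and by Lemma \ref{Coupled Lemma 2.1} one has $|\lambda_n|\asymp\mu_n$. Moreover the limiting polynomial $g(Z)=Z^4+(a+1)Z^2+a=(Z^2+a)(Z^2+1)$ has, because $a\neq1$, four distinct nonzero roots $\pm i\sqrt a,\pm i$; hence for $n$ large the four roots of $P_n$ split into a pair close to $\pm i\sqrt a\mu_n$ (the first branch) and a pair close to $\pm i\mu_n$ (the second branch), and we treat each separately.

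For the first branch we use the first factorization. Since $(\lambda_{1,n}^\pm)^2+a\mu_n^2=o(\mu_n^2)$ while $(\lambda_{1,n}^\pm)^2+\mu_n^2+\mu_n^{2\gamma}\lambda_{1,n}^\pm=-(a-1)\mu_n^2\big(1+o(1)\big)\neq0$, we may divide and obtain, with $S:=\tfrac{(\lambda_{1,n}^\pm)^2}{a}+\mu_n^2$,
\begin{equation*}
S=\frac{-\alpha^2(\lambda_{1,n}^\pm)^2+(a-1)\mu_n^{2\gamma+2}\lambda_{1,n}^\pm}{a\big((\lambda_{1,n}^\pm)^2+\mu_n^2+\mu_n^{2\gamma}\lambda_{1,n}^\pm\big)}.
\end{equation*}
Substituting $(\lambda_{1,n}^\pm)^2=-a\mu_n^2+aS$ on the right makes this a fixed-point relation for $S$: a first pass gives $S=O\big(\mu_n^{\max(0,\,2\gamma+1)}\big)$, and $\lambda_{1,n}^\pm=\pm i\sqrt a\,\mu_n(1-S/\mu_n^2)^{1/2}$. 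When $0\le-2\gamma<1$, one more pass followed by the square-root expansion yields directly the explicit expansion in \eqref{Coupled Equation 2.67}. When $-2\gamma\ge1$, we iterate: expanding the denominator as a geometric series in $\big((\lambda_{1,n}^\pm)^2+a\mu_n^2\big)/\big((a-1)\mu_n^2\big)$ and in $\mu_n^{2\gamma}\lambda_{1,n}^\pm/(\lambda_{1,n}^\pm)^2$, keeping the terms of order at least $\mu_n^{-2N}$, and replacing $\lambda_{1,n}^\pm$ by $\pm i\sqrt a\,\mu_n$ in the single $\mu_n^{2\gamma+2}\lambda$-term (the error is $O(\mu_n^{4\gamma})\subseteq O(\mu_n^{-2N})$ since $2N\le1-2\gamma$ forces $N\le-2\gamma$), one reaches the implicit reduced equation \eqref{Coupled Equation 2.67} with $D_1^\pm=\pm a^{-1/2}$ and $D(1)=\tfrac{\alpha^2}{a-1}$; the remaining $D(k)$, $k\ge2$, are the coefficients generated by the geometric expansion, identified by a routine induction using $\mu_n^{-2}=-(\lambda_{1,n}^\pm)^{-2}\big(1+O(\mu_n^{-2})\big)$.

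The second branch is handled identically from the second factorization: now $(\lambda_{2,n}^\pm)^2+\mu_n^2$ is the small quantity, $(\lambda_{2,n}^\pm)^2+a\mu_n^2+\mu_n^{2\gamma}\lambda_{2,n}^\pm=(a-1)\mu_n^2\big(1+o(1)\big)$, and
\begin{equation*}
(\lambda_{2,n}^\pm)^2+\mu_n^2=\frac{-\alpha^2(\lambda_{2,n}^\pm)^2}{(\lambda_{2,n}^\pm)^2+a\mu_n^2+\mu_n^{2\gamma}\lambda_{2,n}^\pm}.
\end{equation*}
The same bootstrap gives, for $0\le-2\gamma<1$, $(\lambda_{2,n}^\pm)^2=-\mu_n^2+\tfrac{\alpha^2}{a-1}\mp\tfrac{i\alpha^2}{(a-1)^2}\mu_n^{2\gamma-1}+\cdots$, hence after extracting the square root the explicit formula \eqref{Coupled Equation 2.68} with imaginary correction $\mp\tfrac{i\alpha^2}{2(a-1)\mu_n}$ and real part $-\tfrac{\alpha^2}{2(a-1)^2\mu_n^{2-2\gamma}}$; for $-2\gamma\ge1$ it produces the implicit relation \eqref{Coupled Equation 2.68} with $D_2^\pm=\pm\tfrac{\alpha^2}{(a-1)^2}$ and the same $D(k)$. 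The routine ingredients are the two factorizations and the binomial/square-root expansions; the main obstacle is the bookkeeping in the case $-2\gamma\ge1$: keeping all remainders uniform in $n$ through the iteration, deciding exactly which contributions enter the finite sums $\sum_k D(k)(\lambda_{j,n}^\pm)^{2-2k}$ versus the remainder $O(\mu_n^{-2N})$ (resp.\ $O(\mu_n^{-2N-2})$), and verifying that the coefficients produced by the geometric series collapse to the stated closed forms for $D(k)$ — throughout one uses $2N\le1-2\gamma<2N+2$ (so $N\le-2\gamma$) to absorb the neglected terms, in particular those arising from replacing $\lambda_{j,n}^\pm$ by its leading term and from the tail of the binomial series.
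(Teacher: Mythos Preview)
Your approach is correct and genuinely different from the paper's. The paper does not use your near-factorizations; instead it views the characteristic equation \eqref{charact eq} as a quadratic in $\mu_n^2$ and applies the quadratic formula, obtaining the exact relations
\[
\frac{(a+1)\lambda^2+\lambda\mu_n^{2\gamma}\mp(a-1)\lambda^2 P(\lambda,\mu_n)}{2a}+\mu_n^2=0,
\qquad
P(\lambda,\mu_n)=\Bigl(1-\tfrac{4a\alpha^2}{(a-1)^2\lambda^2}-\tfrac{2}{(a-1)\lambda\mu_n^{-2\gamma}}+\tfrac{1}{(a-1)^2\lambda^2\mu_n^{-4\gamma}}\Bigr)^{1/2},
\]
one sign for each branch. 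It then expands the square root $P$ by the binomial series $(1+x)^{1/2}=1+\sum\beta_k x^k$; the closed forms for the $D(k)$ drop out immediately from the known $\beta_k$, and the $iD_j^\pm$ term arises by replacing $\lambda$ by its leading value in the single damping contribution. Your route via the two exact identities $P_n=(\lambda^2+a\mu_n^2)(\lambda^2+\mu_n^2+\mu_n^{2\gamma}\lambda)-(a-1)\mu_n^{2\gamma+2}\lambda+\alpha^2\lambda^2$ and $P_n=(\lambda^2+\mu_n^2)(\lambda^2+a\mu_n^2+\mu_n^{2\gamma}\lambda)+\alpha^2\lambda^2$ is more elementary in that it avoids the square root altogether, and the fixed-point/bootstrap is a perfectly sound way to reach \eqref{Coupled Equation 2.67}--\eqref{Coupled Equation 2.68}. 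The trade-off is that your geometric expansion is naturally in powers of $\mu_n^{-2}$ (and mixed terms), so identifying the coefficients in front of $(\lambda_{j,n}^\pm)^{2-2k}$ with the stated closed form for $D(k)$ requires either a further conversion or, in effect, solving the quadratic in $S$ that your relation encodes --- at which point the discriminant is exactly $(a-1)^2\lambda^4 P(\lambda,\mu_n)^2$ and one is back to the paper's computation. One small slip: for the first branch $\lambda_{1,n}^\pm\sim\pm i\sqrt a\,\mu_n$, so $\mu_n^{-2}=-a(\lambda_{1,n}^\pm)^{-2}\bigl(1+O(\mu_n^{-2})\bigr)$, not $-(\lambda_{1,n}^\pm)^{-2}(\cdots)$.
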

\begin{proof} First,  from \eqref{charact eq}, we have
\begin{equation}\label{Coupled Equation 2.69}
\dfrac{\left(a+1\right)\left(\lambda^\pm_{1,n}\right)^2+\lambda_{1,n}^\pm\mu_n^{2\gamma}-\left(a-1\right)\left(\lambda_{1,n}^\pm\right)^2P\left(\lambda_{1,n}^\pm,\mu_n\right)}{2a}+\mu_n^2=0
\end{equation}
and
\begin{equation}\label{Coupled Equation 2.70}
\dfrac{\left(a+1\right)\left(\lambda^\pm_{2,n}\right)^2+\lambda_{2,n}^\pm\mu_n^{2\gamma}+\left(a-1\right)\left(\lambda^\pm_{2,n}\right)^2P\left(\lambda_{2,n}^\pm,\mu_n\right)}{2a}+\mu_n^2=0,
\end{equation}
where
\begin{equation*}
P\left(\lambda,\mu_n\right)
=
\left(1-\frac{4a\alpha^2}{\left(a-1\right)^2\lambda^2}-\frac{2}{\left(a-1\right)\lambda\mu_n^{-2\gamma}}+\frac{1}{\left(a-1\right)^2\left(\lambda\mu_n^{-2\gamma}\right)^2}\right)^{\frac{1}{2}}.
\end{equation*}
The proof is based on finding the asymptotic expansion of \eqref{Coupled Equation 2.69} and \eqref{Coupled Equation 2.70}. We note that the order of expansion is chosen so that the term equivalent to $\Re\left(\lambda_{n}\right)$ appears.
\noindent  First, we  show \eqref{Coupled Equation 2.67}. Consequently, let $\lambda_n=\lambda_{1,n}^\pm$.  We consider two cases.
\\[0.1in]  \noindent\textbf{Case 1.} If $0\leq-2\gamma<1\left(\text{in this case }N=0\right),$  then from Lemma \ref{Coupled Lemma 2.1},  we get
\begin{equation}\label{Coupled Equation 2.71}
P\left(\lambda_n,\mu_n\right)
=
\left(1-\frac{2}{\left(a-1\right)\lambda_n\mu_n^{-2\gamma}}+O\left(\frac{1}{\mu^2_n}\right)\right)^{\frac{1}{2}}
=
1-\frac{1}{\left(a-1\right)\lambda_n\mu_n^{-2\gamma}}+O\left(\frac{1}{\mu^2_n}\right).
\end{equation}     
Inserting \eqref{Coupled Equation 2.71} in \eqref{Coupled Equation 2.69}, then using Lemma \ref{Coupled Lemma 2.1},  we obtain
\begin{equation}\label{Coupled Equation 2.72}
\frac{\left(\lambda^\pm_{1,n}\right)^2}{a}+\frac{\lambda_{1,n}^\pm}{a\mu_n^{-2\gamma}}+\mu_n^2+O(1)=0.
\end{equation}
Solving \eqref{Coupled Equation 2.72}, we obtain
\begin{equation}\label{Coupled Equation 2.73}
\lambda_{1,n}^\pm=\pm i\sqrt{a} \mu_n-\frac{1}{2\mu_n^{-2\gamma}}+O\left(\frac{1}{\mu_n}\right).
\end{equation}  
\textbf{Case 2.} If $-2\gamma\geq1$, then from \eqref{Coupled Equation 2.73}, the order of expansion increases and $N\geq1$. We have
\begin{equation}\label{Coupled Equation 2.74}
P\left(\lambda_n,\mu_n\right)=1+\sum_{k=1}^N{\beta_k}{x^k}+O\left(x^{N+1}\right),
\end{equation}
where we have set
\begin{equation}\label{x}
x=-\frac{4a\alpha^2}{\left(a-1\right)^2\lambda^2_n}-\frac{2}{\left(a-1\right)\lambda_n\mu_n^{-2\gamma}}+\frac{1}{\left(a-1\right)^2\left(\lambda_n\mu_n^{-2\gamma}\right)^2}.
\end{equation}
From Lemma \ref{Coupled Lemma 2.1}, we have
\begin{equation}\label{Coupled Equation 2.75}
x=-\frac{4a\alpha^2}{\left(a-1\right)^2\lambda^2_n}-\frac{2}{\left(a-1\right)\lambda_n\mu_n^{-2\gamma}}+O\left(\frac{1}{\mu_n^{2-4\gamma }}\right)
\end{equation}
and for every $ k\geq 2$ (in case $N\geq2$), we have
\begin{equation}\label{Coupled Equation 2.76}
x^k=\left(-\frac{4a\alpha^2}{\left(a-1\right)^2\lambda^2_n}\right)^k+O\left(\frac{1}{\mu_n^{2k-2\gamma-1}}\right).
\end{equation}
Inserting  \eqref{Coupled Equation 2.75} and \eqref{Coupled Equation 2.76} in \eqref{Coupled Equation 2.74} and using the condition $1-2\gamma<2N+2$, we get
\begin{equation}\label{Coupled Equation 2.77}
P\left(\lambda_n,\mu_n\right)=1+\sum_{k=1}^N\beta_k\left(-\frac{4a\alpha^2}{\left(a-1\right)^2\lambda^{2}_n}\right)^k-\frac{1}{\left(a-1\right)\lambda_n\mu_n^{-2\gamma}}+O\left(\frac{1}{\mu_n^{\min\left(2-4\gamma,3-2\gamma,2N+2\right)}}\right).
\end{equation}
Since  $N\geq1$ and $2N\leq1-2\gamma$, then 
\begin{equation}\label{Coupled Equation 2.78}
\min\left(2-4\gamma,3-2\gamma,2N+2\right)=2N+2.
\end{equation}
Therefore,
\begin{equation}\label{Coupled Equation 2.79}
P\left(\lambda_n,\mu_n\right)=1+\sum_{k=1}^N\beta_k\left(-\frac{4a\alpha^2}{\left(a-1\right)^2\lambda^{2}_n}\right)^k-\frac{1}{\left(a-1\right)\lambda_n\mu_n^{-2\gamma}}+O\left(\frac{1}{\mu_n^{2N+2}}\right).
\end{equation}
Substituting  \eqref{Coupled Equation 2.79} in \eqref{Coupled Equation 2.69}, then   using Lemma \ref{Coupled Lemma 2.1}, we obtain
\begin{equation}\label{Coupled Equation 2.80}
\frac{\left(\lambda^\pm_{1,n}\right)^2}{a}+\frac{\lambda_{1,n}^\pm}{a\mu_n^{-2\gamma}}+\mu_n^2+\sum_{k=1}^N\frac{D\left(k\right)}{\left(\lambda^\pm_{1,n}\right)^{2k-2}}+O\left(\frac{1}{\mu_n^{2N}}\right)=0.
\end{equation}
From   \eqref{Coupled Equation 2.80} and Lemma \ref{Coupled Lemma 2.1}, we obtain
\begin{equation}\label{Coupled Equation 2.81}
\frac{\left(\lambda^\pm_{1,n}\right)^2}{a}+\mu_n^2+O\left(1\right)=0.
\end{equation}
Consequently, we have
\begin{equation}\label{Coupled Equation 2.82}
\frac{\lambda_{1,n}^\pm}{a\mu_n^{-2\gamma}}=\pm\frac{  i   }{\sqrt{a}\mu_n^{-1-2\gamma}}+O\left(\frac{1}{\mu_n^{1-2\gamma}}\right).
\end{equation}
Inserting \eqref{Coupled Equation 2.82}   in  \eqref{Coupled Equation 2.80}, then using the condition $1-2\gamma\geq 2N$ and  Lemma \ref{Coupled Lemma 2.1}, we get 
\begin{equation}\label{Coupled Equation 2.83}
\frac{\left(\lambda^\pm_{1,n}\right)^2}{a}+\frac{i D_1^\pm}{\mu_n^{-1-2\gamma}}+\mu_n^2+\sum_{k=1}^N\frac{D\left(k\right)}{\left(\lambda^\pm_{1,n}\right)^{2k-2}}+O\left(\frac{1}{\mu_n^{2N}}\right)=0.
\end{equation}
 Finally, from  \eqref{Coupled Equation 2.73} and \eqref{Coupled Equation 2.83}, we get \eqref{Coupled Equation 2.67}. 
\\[0.1in]
 Our next aim is to prove \eqref{Coupled Equation 2.68} by similar computations.  Let $\lambda_n=\lambda_{2,n}^\pm$.  We have two cases.\\[0.1in]
\textbf{Case 1.} If $0\leq-2\gamma<1$, then
\begin{equation*}
P\left(\lambda_n,\mu_n\right)=1+\frac{x}{2}-\frac{x^2}{8}+\frac{x^3}{16}+O\left(x^{4}\right),
\end{equation*}
where $x$ is defined in \eqref{x}.
Using Lemma \ref{Coupled Lemma 2.1}, we get
\begin{equation}\label{Coupled Equation 2.84}
P\left(\lambda_n,\mu_n\right)=1-\frac{2a\alpha^2}{\left(a-1\right)^2\lambda^2_n}-\frac{1}{\left(a-1\right)\lambda\mu_n^{-2\gamma}}-\frac{2a\alpha^2}{\left(a-1\right)^3\lambda^3\mu_n^{-2\gamma}}+O\left(\frac{1}{\mu_n^{4}}\right).
\end{equation}
Inserting \eqref{Coupled Equation 2.84} in \eqref{Coupled Equation 2.70} and using Lemma \ref{Coupled Lemma 2.1}, we obtain
\begin{equation}\label{Coupled Equation 2.85}
\left(\lambda^\pm_{2,n}\right)^2+\mu_n^2-\frac{\alpha^2}{\left(a-1\right)^2\mu_n^{-2\gamma}\lambda_{2,n}^\pm}-\frac{\alpha^2}{a-1}+O\left(\frac{1}{\mu_n^{2}}\right)=0.
\end{equation}
From \eqref{Coupled Equation 2.85} and  Lemma \ref{Coupled Lemma 2.1}, we have
\begin{equation}\label{Coupled Equation 2.86}
\left(\lambda^\pm_{2,n}\right)^2=-\mu_n^2+O\left(1\right).
\end{equation}
Consequently, we have
\begin{equation}\label{Coupled Equation 2.87}
-\frac{\alpha^2}{\left(a-1\right)^2\mu_n^{-2\gamma}\lambda_{2,n}^\pm}   =   \pm 
\frac{i\alpha^2}{\left(a-1\right)^2\mu_n^{1-2\gamma}}
+O\left(\frac{1}{\mu_n^{3-2\gamma}}\right).
\end{equation}
Substituting \eqref{Coupled Equation 2.87} in \eqref{Coupled Equation 2.85}, we get
\begin{equation}\label{Coupled Equation 2.88}
\left(\lambda^\pm_{2,n}\right)^2+\mu_n^2-\frac{\alpha^2}{a-1} \pm 
\frac{i\alpha^2}{\left(a-1\right)^2\mu_n^{1-2\gamma}}+O\left(\frac{1}{\mu_n^{2}}\right)=0.
\end{equation}
Solving \eqref{Coupled Equation 2.88}, we obtain
\begin{equation}\label{Coupled Equation 2.89}
\lambda^\pm_{2,n}=\pm i\mu_n\mp\frac{i\alpha^2}{2\left( a-1\right)\mu_n} -
\frac{\alpha^2}{2\left(a-1\right)^2\mu_n^{2-2\gamma}}+O\left(\frac{1}{\mu_n^{3}}\right).
\end{equation}
\textbf{Case 2.} If $-2\gamma\geq 1,$ then the order of expansion equal to $N$ is not sufficient. Consequently, we let
\begin{equation}\label{Coupled Equation 2.90}
P\left(\lambda_n,\mu_n\right)=1+\sum_{k=1}^{N+1}{\beta_k}{x^k}+O\left(x^{N+2}\right),
\end{equation}
where $x$ is given in \eqref{x}. 
Using Lemma \ref{Coupled Lemma 2.1}, we obtain
\begin{equation}\label{Coupled Equation 2.91}
\beta_1 x+\beta_2 x^2=\sum_{k=1}^2\beta_k\left(-\frac{4a\alpha^2}{\left(a-1\right)^2\lambda^2_n}\right)^k-\frac{1}{\left(a-1\right)\lambda\mu_n^{-2\gamma}}-\frac{2a\alpha^2}{\left(a-1\right)^3\lambda^3\mu_n^{-2\gamma}}+O\left(\frac{1}{\mu_n^{4-4\gamma}}\right).
\end{equation}
If $N\geq2$, then for every $ k\geq 3$, we have
\begin{equation}\label{Coupled Equation 2.92}
x^k=\left(-\frac{4a\alpha^2}{\left(a-1\right)^2\lambda^2_n}\right)^k+O\left(\frac{1}{\mu_n^{2k-2\gamma-1}}\right).
\end{equation}
Inserting  \eqref{Coupled Equation 2.91} and \eqref{Coupled Equation 2.92} in \eqref{Coupled Equation 2.90}, we get
\begin{equation}\label{Coupled Equation 2.93}
P\left(\lambda_n,\mu_n\right)=1+\sum_{k=1}^{N+1}\beta_k\left(-\frac{4a\alpha^2}{\left(a-1\right)^2\lambda^{2}_n}\right)^k-\frac{1}{\left(a-1\right)\lambda_n\mu_n^{-2\gamma}}-\frac{2a\alpha^2}{\left(a-1\right)^3\lambda^3\mu_n^{-2\gamma}}+O\left(\frac{1}{\mu_n^{S(N,\gamma)}}\right),
\end{equation}
where $$S(N,\gamma)=\min\left(4-4\gamma,5-2\gamma,2N+4\right).$$
Since  $N\geq1$ and $2N\leq1-2\gamma$, we can easily check that
\begin{equation}\label{Coupled Equation 2.94}
S(N,\gamma)=2N+4.
\end{equation}
From \eqref{Coupled Equation 2.93} and \eqref{Coupled Equation 2.94}, we get
\begin{equation}\label{Coupled Equation 2.95}
P\left(\lambda_n,\mu_n\right)=1+\sum_{k=1}^{N+1}\beta_k\left(-\frac{4a\alpha^2}{\left(a-1\right)^2\lambda^{2}_n}\right)^k-\frac{1}{\left(a-1\right)\lambda_n\mu_n^{-2\gamma}}
-\frac{2a\alpha^2}{\left(a-1\right)^3\lambda^3\mu_n^{-2\gamma}}+O\left(\frac{1}{\mu_n^{2N+4}}\right).
\end{equation}
Inserting \eqref{Coupled Equation 2.95} in \eqref{Coupled Equation 2.70} and using Lemma \ref{Coupled Lemma 2.1}, we obtain
\begin{equation}\label{Coupled Equation 2.96}
\left(\lambda^\pm_{2,n}\right)^2+\mu_n^2-\frac{\alpha^2}{\left(a-1\right)^2\mu_n^{-2\gamma}\lambda_{2,n}^\pm}-\sum_{k=1}^{N+1}\frac{D(k)}{\left(\lambda^\pm_{2,n}\right)^{2k-2}}+O\left(\frac{1}{\mu_n^{2N+2}}\right)=0.
\end{equation}
From \eqref{Coupled Equation 2.96} and  Lemma \ref{Coupled Lemma 2.1}, we remark that
\begin{equation}\label{Coupled Equation 2.97}
\left(\lambda^\pm_{2,n}\right)^2=-\mu_n^2+O\left(1\right).
\end{equation}
Consequently, we obtain
\begin{equation}\label{Coupled Equation 2.98}
-\frac{\alpha^2}{\left(a-1\right)^2\mu_n^{-2\gamma}\lambda_{2,n}^\pm}   =   \pm 
\frac{i\alpha^2}{\left(a-1\right)^2\mu_n^{1-2\gamma}}
+O\left(\frac{1}{\mu_n^{3-2\gamma}}\right).
\end{equation}
Substituting \eqref{Coupled Equation 2.98} in 
\eqref{Coupled Equation 2.96}, we get
\begin{equation}\label{Coupled Equation 2.99}
\left(\lambda^\pm_{2,n}\right)^2+\mu_n^2- \sum_{k=1}^{N+1}
\frac{D\left(k\right)}{\left(\lambda^\pm_{2,n}\right)^{2k-2}}+\frac{  i   D^\pm_2}{\mu_n^{1-2\gamma}}+O\left(\frac{1}{\mu_n^{2N+2}}\right)=0.
\end{equation}
Finally, from \eqref{Coupled Equation 2.89} and \eqref{Coupled Equation 2.99}, we get \eqref{Coupled Equation 2.68}. Thus, the proof is  complete.
\end{proof}
\noindent Next, when $-2\gamma\geq1$, we try to replace the powers of $\lambda_n$ in \eqref{Coupled Equation 2.67} and in \eqref{Coupled Equation 2.68} with powers of $\mu_n$ as shown in the following lemma. 
\begin{lem}\label{Coupled Lemma 2.11}
{Suppose that $a\neq 1$ and  $\gamma\leq0$.  Let $N\in\mathbb{N}$ be the integer part of $\frac12-\gamma$. 
 If  $-2\gamma\geq1$, then the  eigenvalues $\lambda_{1,n}^{\pm},\ \lambda_{2,n}^{\pm}$ satisfy the following asymptotic expansion
\begin{equation}\label{Coupled Equation 2.100}
\frac{\left(\lambda^\pm_{1,n}\right)^2}{a}+\mu_n^2+\sum_{k=1}^{N}\frac{F_1(k)}{\mu_n^{2k-2}}+\frac{  i   D_1^\pm}{\mu_n^{-1-2\gamma}}+O\left(\frac{1}{\mu_n^{2N}}\right)=0
\end{equation}
and
\begin{equation}\label{Coupled Equation 2.101}
\left(\lambda^\pm_{2,n}\right)^2+\mu_n^2- \sum_{k=1}^{N+1}
\frac{F_2\left(k\right)}{\mu_n^{2k-2}}+\frac{  i   D^\pm_2}{\mu_n^{1-2\gamma}}+O\left(\frac{1}{\mu_n^{2N+2}}\right)=0,
\end{equation}
where, for $1\leq k\leq N+1$, the real numbers $F_1(k)$ and  $F_2\left(k\right)$  depend only on $a$ and $\alpha$. }
\end{lem}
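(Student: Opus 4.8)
The plan is a finite bootstrap: start from the crude localizations of $(\lambda^\pm_{1,n})^2$ and $(\lambda^\pm_{2,n})^2$ that are already available from Lemma \ref{Coupled Lemma 2.10} and Lemma \ref{Coupled Lemma 2.1}, and then replace, one power at a time, each negative power $(\lambda^\pm_{j,n})^{-(2k-2)}$ occurring in \eqref{Coupled Equation 2.67} and \eqref{Coupled Equation 2.68} by its expansion in powers of $\mu_n^{-2}$. First, since $-2\gamma\geq1$ forces $1+2\gamma\leq0$ and $1-2\gamma\geq2$, the term $iD_1^\pm\mu_n^{1+2\gamma}$ in \eqref{Coupled Equation 2.67} is $O(1)$ and $iD_2^\pm\mu_n^{2\gamma-1}$ in \eqref{Coupled Equation 2.68} is $O(\mu_n^{-2})$, while by Lemma \ref{Coupled Lemma 2.1} each $D(k)(\lambda^\pm_{j,n})^{-(2k-2)}$ with $k\geq1$ is $O(1)$; hence \eqref{Coupled Equation 2.67} and \eqref{Coupled Equation 2.68} give $(\lambda^\pm_{1,n})^2=-a\mu_n^2+O(1)$ and $(\lambda^\pm_{2,n})^2=-\mu_n^2+O(1)$.

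Next I would iterate. Write $(\lambda^\pm_{1,n})^2=-a\mu_n^2(1+\varepsilon_n)$ with $\varepsilon_n=O(\mu_n^{-2})$, and similarly $(\lambda^\pm_{2,n})^2=-\mu_n^2(1+\varepsilon_n)$. Then $(\lambda^\pm_{1,n})^{-(2k-2)}=(-a\mu_n^2)^{-(k-1)}(1+\varepsilon_n)^{-(k-1)}$, and expanding $(1+\varepsilon_n)^{-(k-1)}$ by the binomial formula produces a finite sum of powers of $\mu_n^{-2}$ with coefficients in $\mathbb{R}[a,\alpha]$ — recall $D(k)$, $D_1^\pm$, $D_2^\pm$ and $a$ are real — plus a remainder that is $O(\mu_n^{-2})$ times the term it corrects. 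Since $\varepsilon_n$ is itself, by the previous stage, such a finite sum plus a controlled remainder, substituting the current expansion of $(\lambda^\pm_{j,n})^2$ into the right-hand sides of \eqref{Coupled Equation 2.67} and \eqref{Coupled Equation 2.68} yields a strictly finer expansion, the remainder improving by a factor $O(\mu_n^{-2})$ at each pass. Iterating at most $N$ times for the first branch and at most $N+1$ times for the second drives the remainders to $O(\mu_n^{-2N})$ and $O(\mu_n^{-2N-2})$ respectively; collecting the resulting real powers of $\mu_n^{-2}$ into $\sum_{k=1}^N F_1(k)\mu_n^{2-2k}$ and $\sum_{k=1}^{N+1}F_2(k)\mu_n^{2-2k}$ gives \eqref{Coupled Equation 2.100} and \eqref{Coupled Equation 2.101}, with $F_1(k),F_2(k)\in\mathbb{R}$ depending only on $a$ and $\alpha$.

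The point that needs care — and the main obstacle — is the bookkeeping of orders, specifically ensuring the iteration does not generate an imaginary term larger than the claimed remainder, so that the only surviving imaginary contributions are the prescribed $iD_1^\pm\mu_n^{1+2\gamma}$ and $iD_2^\pm\mu_n^{2\gamma-1}$. This is where the definition of $N$ as the integer part of $\tfrac12-\gamma$ enters: the dominant imaginary part of $(\lambda^\pm_{1,n})^2$ is $-iaD_1^\pm\mu_n^{1+2\gamma}$, and dividing the real $D(k)$ by $(\lambda^\pm_{1,n})^{2k-2}$ for $k\geq2$ contributes an imaginary part of order at most $\mu_n^{1+2\gamma-4}=\mu_n^{2\gamma-3}$, which is absorbed into $O(\mu_n^{-2N})$ because $2N\leq1-2\gamma$; moreover the $(\mathrm{Im}/\mathrm{Re})^2$-type corrections to the real part are $O(\mu_n^{4\gamma-2})$, again within $O(\mu_n^{-2N})$ since $2N\leq 2-4\gamma$. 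For the second branch the analogous estimates use $2N\leq1-2\gamma<3-2\gamma$ to absorb $O(\mu_n^{2\gamma-5})$ into $O(\mu_n^{-2N-2})$. One must also verify, exactly as in the proof of Lemma \ref{Coupled Lemma 2.10}, that the minimum of the competing exponents arising when truncating the binomial series equals the claimed order — the same elementary comparison of linear functions of $\gamma$ and $N$ used there. With these verifications in place the proof is complete.
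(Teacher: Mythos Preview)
Your proposal is correct and follows essentially the same bootstrap strategy as the paper: start from the crude localization $(\lambda^\pm_{j,n})^2=-c\mu_n^2+O(1)$, substitute iteratively into the $D(k)(\lambda^\pm_{j,n})^{-(2k-2)}$ terms of \eqref{Coupled Equation 2.67}--\eqref{Coupled Equation 2.68}, and gain a factor $O(\mu_n^{-2})$ at each pass until the claimed remainder is reached. The only difference is presentational: the paper carries out the first iterations explicitly (treating $N\le3$, then $3<N\le5$, and so on, with concrete formulas for $F_2(1),\dots,F_2(5)$) and then appeals to repetition, whereas you describe the recursion abstractly and add the explicit order-tracking of the imaginary contributions to justify that the $F_j(k)$ remain real --- a point the paper leaves implicit in its concrete computations.
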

\begin{proof}
We first show  \eqref{Coupled Equation 2.101} by proceeding in two steps. We start by proving the following asymptotic estimate
 \begin{equation}\label{Coupled Equation 2.102}
\left(\lambda^\pm_{2,n}\right)^2+\mu_n^2- \sum_{k=1}^{N}
\frac{F_2\left(k\right)}{\mu_n^{2k-2}}+O\left(\frac{1}{\mu_n^{2N}}\right)=0,
\end{equation}
where, for $1\leq k\leq N+1$, the numbers  $F_2\left(k\right)$ are real numbers depending only on $a$ and $\alpha$. For this aim, using \eqref{Coupled Equation 2.68}, Lemma \ref{Coupled Lemma 2.1}, and  the fact that $2N-2<2N\leq 1-2\gamma$, we get
\begin{equation}\label{Coupled Equation 2.103}
\left(\lambda^\pm_{2,n}\right)^2+\mu_n^2- \sum_{k=1}^{N}
\frac{D\left(k\right)}{\left(\lambda^\pm_{2,n}\right)^{2k-2}}+O\left(\frac{1}{\mu_n^{2N}}\right)=0.
\end{equation}
If $N\leq3$, then from \eqref{Coupled Equation 2.103} and Lemma \ref{Coupled Lemma 2.1}, we obtain
\begin{equation}\label{Coupled Equation 2.104}
\left(\lambda^\pm_{2,n}\right)^2+\mu_n^2-\frac{\alpha^2}{a-1}+O\left(\frac{1}{\mu_n^{2}}\right)=0.
\end{equation}
From \eqref{Coupled Equation 2.104}, we get
\begin{equation}\label{Coupled Equation 2.105}
\sum_{k=1}^{3}\frac{D(k)}{\left(\lambda^{\pm}_{2,n}\right)^{2k-2}}=\sum_{k=1}^{3}\frac{F_2(k)}{\mu_n^{2k-2}}+O\left(\frac{1}{\mu^{6}_{n}}\right),
\end{equation}
where 
\begin{equation*}
F_2\left(1\right)=\frac{\alpha^2}{a-1},\ F_2\left(2\right)=-\frac{a\alpha^4}{\left(a-1\right)^3}\ \text{ and } \
F_2\left(3\right)=\frac{a\left(a+1\right)\alpha^6}{\left(a-1\right)^5}.
\end{equation*}
 Inserting  \eqref{Coupled Equation 2.105} in  \eqref{Coupled Equation 2.103}, we obtain
\begin{equation}\label{Coupled Equation 2.106}
\left(\lambda^\pm_{2,n}\right)^2+\mu_n^2- \sum_{k=1}^{3}
\frac{F_2\left(k\right)}{\mu_n^{2k-2}}+O\left(\frac{1}{\mu_{n}^{6}}\right)=0.
\end{equation}
Therefore, if $N\leq3$, then from \eqref{Coupled Equation 2.106}, we get \eqref{Coupled Equation 2.102}. Next, if $3<N\leq5$, then from \eqref{Coupled Equation 2.106}, we obtain
\begin{equation}\label{Coupled Equation 2.107}
\sum_{k=1}^{5}\frac{D(k)}{\left(\lambda^{\pm}_{2,n}\right)^{2k-2}}=\sum_{k=1}^{5}\frac{F_2(k)}{\mu_n^{2k-2}}+O\left(\frac{1}{\mu^{10}_{n}}\right),
\end{equation}
where
\begin{equation*}
F_2\left(4\right)=-\frac{a\alpha^8\left(a^2+3a+1\right)}{\left(a-1\right)^7}
\ \ \text{ and }\ \
F_2\left(5\right)=\frac{a\alpha^{10}\left(a^2+5a+1\right)}{\left(a-1\right)^9}.
\end{equation*}
Substituting \eqref{Coupled Equation 2.107} in \eqref{Coupled Equation 2.103}, we get
\begin{equation}\label{Coupled Equation 2.108}
\left(\lambda^\pm_{2,n}\right)^2+\mu_n^2- \sum_{k=1}^{5}
\frac{F_2\left(k\right)}{\mu_n^{2k-2}}+O\left(\frac{1}{\mu_{n}^{10}}\right)=0.
\end{equation}
Therefore, if $3<N\leq5$, then from \eqref{Coupled Equation 2.108}, we get \eqref{Coupled Equation 2.102}. Similarly,  if $N>5$, we iterate the above the process in order to get  \eqref{Coupled Equation 2.102}.
\\[0.1in]
 Our next goal is to prove \eqref{Coupled Equation 2.101}. From \eqref{Coupled Equation 2.102}, we get 
\begin{equation*}
\frac{1}{\left(\lambda^\pm_{2,n}\right)^2}=-\frac{1}{\mu^2_n\left(1-\displaystyle{\sum_{k=1}^{N}}
\frac{F_2\left(k\right)}{\mu_n^{2k}}+O\left(\frac{1}{\mu_n^{2N+2}}\right)\right)}.
\end{equation*}
Therefore, we have
\begin{equation}\label{Coupled Equation 2.109}
\frac{1}{\left(\lambda^\pm_{2,n}\right)^2}=-\frac{1}{\mu_n^2}-\frac{1}{\mu^2_n}\displaystyle{\sum_{j=1}^{N-1}}\left(\sum_{k=1}^{N}
\frac{F_2\left(k\right)}{\mu_n^{2k}}\right)^{j}+O\left(\frac{1}{\mu_n^{2N+2}}\right).
\end{equation}
From \eqref{Coupled Equation 2.109} we can find a real number $F_2\left(N+1\right)$   depending on $a$ and $\alpha$ such that
\begin{equation}\label{Coupled Equation 2.110}
\sum_{k=1}^{N+1}\frac{D(k)}{\left(\lambda^{\pm}_{2,n}\right)^{2k-2}}=\sum_{k=1}^{N+1}\frac{F_2(k)}{\mu_n^{2k-2}}+O\left(\frac{1}{\mu_n^{2N+2}}\right).
\end{equation}
Inserting \eqref{Coupled Equation 2.110} in \eqref{Coupled Equation 2.68}, we get \eqref{Coupled Equation 2.101}.
\\[0.1in]
 \noindent Next, we show  \eqref{Coupled Equation 2.100}. If $N=1$, then   from \eqref{Coupled Equation 2.67},  we obtain \eqref{Coupled Equation 2.100}. Otherwise, if $N\geq2$, then similar to \eqref{Coupled Equation 2.102}, using \eqref{Coupled Equation 2.67}, Lemma \ref{Coupled Lemma 2.1}, and  the fact that $2N-4<2N-2\leq -1-2\gamma$, we get
\begin{equation}\label{Coupled Equation 2.111}
\frac{\left(\lambda^\pm_{1,n}\right)^2}{a}+\mu_n^2+ \sum_{k=1}^{N-1}
\frac{F_1\left(k\right)}{\mu_n^{2k-2}}+O\left(\frac{1}{\mu_n^{2N-2}}\right)=0,
\end{equation}
where for every $k=1,\ldots,N-1$ the numbers $F_1(k)$ are real numbers depending only on $a$ and $\alpha$. 
 Moreover,  similar to \eqref{Coupled Equation 2.110}, using \eqref{Coupled Equation 2.111}, it follows that
\begin{equation}\label{Coupled Equation 2.112}
\sum_{k=1}^{N}\frac{D(k)}{\left(\lambda^{\pm}_{1,n}\right)^{2k-2}}=\sum_{k=1}^{N}\frac{F_1(k)}{\mu_n^{2k-2}}+O\left(\frac{1}{\mu_n^{2N}}\right),
\end{equation}
where $F_1\left(N\right)$ is a real number  depending on $a$ and $\alpha.$ Substituting \eqref{Coupled Equation 2.112} in second estimation of \eqref{Coupled Equation 2.67}, we get \eqref{Coupled Equation 2.100}.
Thus, the proof is  complete.
\end{proof}
\noindent\textbf{Proof of Lemma \ref{Coupled Lemma 2.12}.} Suppose that $a\neq1$ and $\gamma\leq0$. First, if $0\leq-2\gamma<1,$ then the estimations \eqref{Coupled Equation 2.113} and \eqref{Coupled Equation 2.114}   are obtained from \eqref{Coupled Equation 2.67} and \eqref{Coupled Equation 2.68}. Otherwise, if  $-2\gamma\geq1$, then from  \eqref{Coupled Equation 2.100}, we get
\begin{equation}\label{Coupled Equation 2.117}
\lambda^\pm_{1,n}=\pm i\sqrt{a}\mu_n\left(1+x\right)^{\frac{1}{2}}, 
\end{equation}
where
\begin{equation}\label{Coupled Equation 2.118}
x=\sum_{k=1}^{N}\frac{F_1(k)}{\mu_n^{2k}}+\frac{  i   D_1^\pm}{\mu_n^{1-2\gamma}}+O\left(\frac{1}{\mu_n^{2N+2}}\right).
\end{equation}
Therefore, we have
\begin{equation}\label{Coupled Equation 2.119}
\left(1+x\right)^{\frac{1}{2}}=1+\sum_{j=1}^{N}{\beta_j}{x^j}+O\left(x^{N+1}\right),
\end{equation}
where, for every $j\geq2$, we have 
\begin{equation}\label{Coupled Equation 2.120}
x^j=\left(\sum_{k=1}^{N}\frac{F_1(k)}{\mu_n^{2k}}\right)^j+O\left(\frac{1}{\mu_n^{3-2\gamma}}\right)+O\left(\frac{1}{\mu_n^{2N+4}}\right).
\end{equation}
Since $1-2\gamma\geq 2N$, then for every $2\leq j\leq N$, we have 
\begin{equation}\label{Coupled Equation 2.121}
x^j=\left(\sum_{k=1}^{N}\frac{F_1(k)}{\mu_n^{2k}}\right)^j+O\left(\frac{1}{\mu_n^{2N+2}}\right)
\end{equation}
and
\begin{equation}\label{Coupled Equation 2.122}
x^{N+1}=\frac{1}{\mu^{2N+2}_n}\left(\sum_{k=1}^{N}\frac{F_1(k)}{\mu_n^{2k-2}}\right)^{N+1}+O\left(\frac{1}{\mu_n^{2N+4}}\right)=O\left(\frac{1}{\mu_n^{2N+2}}\right).
\end{equation}
Inserting  \eqref{Coupled Equation 2.121} and \eqref{Coupled Equation 2.122} in \eqref{Coupled Equation 2.119}, we obtain
\begin{equation}\label{Coupled Equation 2.124}
\left(1+x\right)^{\frac{1}{2}}=1+\frac{  i   D_1^\pm}{2\mu_n^{1-2\gamma}}+\sum_{j=1}^N\beta_j\left(\sum_{k=1}^{N}\frac{F_1(k)}{\mu_n^{2k}}\right)^j+O\left(\frac{1}{\mu_n^{2N+2}}\right).
\end{equation}
On the other hand, for every $1\leq j\leq N$,  $1\leq k\leq N$, there exist  real numbers  $C_1\left(\ell\right),\ 1\leq \ell\leq N$,  depending only on $a$ and $\alpha$, such that
\begin{equation}\label{Coupled Equation 2.125}
\sum_{j=1}^N\beta_j\left(\sum_{k=1}^{N}\frac{F_1(k)}{\mu_n^{2k}}\right)^j=\sum_{\ell=1}^{N}\frac{C_1\left(\ell\right)}{\mu_n^{2\ell}}+O\left(\frac{1}{\mu_n^{2N+2}}\right).
\end{equation}
Substituting \eqref{Coupled Equation 2.125} in \eqref{Coupled Equation 2.124}, we obtain
\begin{equation}\label{Coupled Equation 2.126}
\left(1+x\right)^{\frac{1}{2}}=1+\frac{  i   D_1^\pm}{2\mu_n^{1-2\gamma}}+\sum_{\ell=1}^{N}\frac{C_1\left(\ell\right)}{\mu_n^{2\ell}}+O\left(\frac{1}{\mu_n^{2N+2}}\right).
\end{equation}
Inserting \eqref{Coupled Equation 2.126} in \eqref{Coupled Equation 2.117}, we get second estimation of \eqref{Coupled Equation 2.113}. Next, for $\lambda_{2,n}^\pm,$ from \eqref{Coupled Equation 2.101}, we obtain
\begin{equation}\label{Coupled Equation 2.127}
\lambda^\pm_{2,n}=\pm i\mu_n\left(1- \sum_{k=1}^{N+1}
\frac{F_2\left(k\right)}{\mu_n^{2k}}+\frac{  i   D^\pm_2}{\mu_n^{3-2\gamma}}+O\left(\frac{1}{\mu_n^{2N+4}}\right)\right)^{\frac{1}{2}}.
\end{equation}
Similar to \eqref{Coupled Equation 2.126}, we can show that
\begin{equation}\label{Coupled Equation 2.128}
\left(1-\sum_{k=1}^{N+1}\frac{F_2(k)}{\mu_n^{2k}}+\frac{  i   D_2^\pm}{\mu_n^{3-2\gamma}}+O\left(\frac{1}{\mu_n^{2N+4}}\right)\right)^{\frac{1}{2}}=1+\frac{  i   D_2^\pm}{2\mu_n^{3-2\gamma}}+\sum_{\ell=1}^{N+1}\frac{C_2\left(\ell\right)}{\mu_n^{2\ell}}+O\left(\frac{1}{\mu_n^{2N+4}}\right),
\end{equation}
where for every $ 1\leq \ell\leq N+1,$ the numbers $C_2\left(\ell\right)$ are real numbers depending only on $a$ and $\alpha$. Inserting \eqref{Coupled Equation 2.128} in \eqref{Coupled Equation 2.127}, we obtain  second estimation of \eqref{Coupled Equation 2.114}. Thus, the proof is complete.\xqed{$\square$}
\\[0.1in]\noindent We now study the asymptotic behavior of the eigenvectors in the different cases $a=1$ and  $\gamma<0$ or $a\neq1$ and $\gamma\leq0$. We prove the following lemma.
\begin{lem}\label{eigenvectorspoly}
{If $a=1$ and  $\gamma<0$ or $a\neq1$ and $\gamma\leq0$, and $N$ equal to the integer part of $\frac12-\gamma$, then
the eigenvectors $e_{1,n}^{\pm}$ and $e_{2,n}^{\pm}$ of System \eqref{Coupled Equation 2.1} satisfy the following asymptotic expansion\\[0.1in]
\noindent If $a=1$ and  $\gamma<0$, then
\begin{equation}\label{Coupled Equation 2.45}
e_{1,n}^{\pm}=\frac{1}{2}\left(\frac{e_n}{\pm  i   \mu_n},e_n, \frac{e_n}{\mp\mu_n}, -ie_n\right)^{\top}
+\left(O\left(\frac{1}{\mu_n^{2}}\right), 0, O\left(\frac{1}{\mu_n^{\min\left(2,1-2\gamma\right)}}\right),  O\left(\frac{1}{\mu_n^{-2\gamma}}\right)\right)^{\top}
\end{equation}
and
\begin{equation}\label{Coupled Equation 2.46}
e_{2,n}^{\pm}=\frac{1}{2}\left( \frac{e_n}{\mp\mu_n}, -  i    e_n,  \frac{e_n}{\pm  i   \mu_n}, e_n  \right)^{\top}+
\left(O\left(\frac{1}{\mu_n^{\min\left(2,1-2\gamma\right)}}\right),  O\left(\frac{1}{\mu_n^{-2\gamma}}\right),    O\left(\frac{1}{\mu_n^{2}}\right) ,0 \right)^{\top}.
\end{equation}
If $a\neq0$ and $\gamma\leq0$, then
\begin{equation}\label{Coupled Equation 2.115}
e_{1,n}^{\pm}=\frac{1}{\sqrt{2}}\left(\frac{e_n}{\pm  i   \sqrt{a}\mu_n}, e_n,0,0\right)^{\top}+\left(O\left(\frac{1}{\mu^2_n}\right),0, O\left(\frac{1}{\mu_n^2}\right),O\left(\frac{1}{\mu_n}\right) \right)^{\top}
\end{equation}
and
\begin{equation}\label{Coupled Equation 2.116}
e_{2,n}^{\pm}=\frac{1}{\sqrt{2}}\left(0,0,\frac{e_n}{\pm  i   \mu_n},e_n \right)^{\top}+\left(O\left(\frac{1}{\mu^2_n}\right),O\left(\frac{1}{\mu_n}\right),O\left(\frac{1}{\mu^2_n}\right),0\right)^{\top}.
\end{equation}}
\end{lem}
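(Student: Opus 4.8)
The plan is to extract the eigenvector asymptotics directly from the eigenvalue asymptotics already established in Lemmas \ref{Coupled Lemma 2.9} and \ref{Coupled Lemma 2.12}, combined with the explicit closed forms of the eigenvectors coming from the system \eqref{Coupled Equation 2.5}. First I would write, exactly as in the proof of Lemma \ref{eigenvectors}, the generic eigenvector attached to the root $\lambda$ of \eqref{charact eq}: normalizing the first (resp. third) component, one has from \eqref{Coupled Equation 2.5} that $C_n = \dfrac{\alpha\lambda}{\lambda^2+\mu_n^2}B_n$, so that the eigenvector is proportional to $\left(\dfrac{e_n}{\lambda},\,e_n,\,\dfrac{\alpha e_n}{\lambda^2+\mu_n^2},\,\dfrac{\alpha\lambda e_n}{\lambda^2+\mu_n^2}\right)^{\top}$, and symmetrically for the other branch. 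Everything then reduces to estimating the scalar quantities $\dfrac1\lambda$ and $\dfrac{\alpha\lambda}{\lambda^2+\mu_n^2}$ (and its $\lambda$-multiple) along each branch, and then choosing the free constant $B_{k,n}^\pm$ so as to normalize in $\mathcal H$.

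For the case $a=1$, $\gamma<0$: by Lemma \ref{Coupled Lemma 2.9}, $\lambda_{1,n}^\pm = \pm i\mu_n + \frac{i\alpha}{2} + O(\mu_n^{\min(1,-2\gamma)-1}\cdot\mu_n) $ — more precisely $\lambda_{1,n}^\pm = \pm i\mu_n(1+O(\mu_n^{-1}))$, so $\frac1{\lambda_{1,n}^\pm}=\frac1{\pm i\mu_n}+O(\mu_n^{-2})$. The delicate term is $\lambda^2+\mu_n^2$: since $\lambda_{1,n}^\pm$ solves the first equation in \eqref{Coupled Equation 2.15.33}, one has exactly $\left(\lambda_{1,n}^\pm\right)^2+\mu_n^2 = -\frac{\mu_n^{2\gamma}-i\sqrt{4\alpha^2-\mu_n^{4\gamma}}}{2}\lambda_{1,n}^\pm$, hence $\dfrac{\alpha\lambda_{1,n}^\pm}{\left(\lambda_{1,n}^\pm\right)^2+\mu_n^2} = \dfrac{-2\alpha}{\mu_n^{2\gamma}-i\sqrt{4\alpha^2-\mu_n^{4\gamma}}} = \dfrac{-2\alpha}{-2i\alpha+O(\mu_n^{2\gamma})} = -i + O(\mu_n^{2\gamma})$, and then $\dfrac{\alpha e_n}{(\lambda_{1,n}^\pm)^2+\mu_n^2} = \dfrac{1}{\lambda_{1,n}^\pm}\cdot(-i+O(\mu_n^{2\gamma})) = \dfrac{e_n}{\mp\mu_n}+O(\mu_n^{\min(2,1-2\gamma)})\cdot$(norm of $e_n$). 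Taking $B_{1,n}^\pm = \frac12$ (which matches the normalizing constant $\frac1{\sqrt{2+2\cdot1^2}}=\frac12$ since $|{-i}|=1$) yields \eqref{Coupled Equation 2.45}; the computation for $\lambda_{2,n}^\pm$ is identical after multiplying the roles, using the second equation in \eqref{Coupled Equation 2.15.33}, giving \eqref{Coupled Equation 2.46}.

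For the case $a\neq1$, $\gamma\leq0$: here the two branches are genuinely decoupled at leading order. By Lemma \ref{Coupled Lemma 2.12}, $\lambda_{1,n}^\pm=\pm i\sqrt a\,\mu_n(1+O(\mu_n^{-1}))$, so $\frac1{\lambda_{1,n}^\pm}=\frac1{\pm i\sqrt a\mu_n}+O(\mu_n^{-2})$, while $\left(\lambda_{1,n}^\pm\right)^2+\mu_n^2 = -(a-1)\mu_n^2 + O(1)$ is of order $\mu_n^2$ (since $a\neq1$), whence $\dfrac{\alpha e_n}{(\lambda_{1,n}^\pm)^2+\mu_n^2}=O(\mu_n^{-2})$ and $\dfrac{\alpha\lambda_{1,n}^\pm e_n}{(\lambda_{1,n}^\pm)^2+\mu_n^2}=O(\mu_n^{-1})$; normalizing with $B_{1,n}^\pm=\frac1{\sqrt2}$ gives \eqref{Coupled Equation 2.115}. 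For $\lambda_{2,n}^\pm=\pm i\mu_n(1+O(\mu_n^{-2}))$, now $\frac1{\lambda_{2,n}^\pm}=\frac1{\pm i\mu_n}+O(\mu_n^{-3})$, and $\left(\lambda_{2,n}^\pm\right)^2+\mu_n^2 = \dfrac{\alpha^2}{a-1}+O(\mu_n^{-1})$ by \eqref{Coupled Equation 2.85}/\eqref{Coupled Equation 2.99}, which is $O(1)$, so $\dfrac{\alpha\lambda_{2,n}^\pm e_n}{(\lambda_{2,n}^\pm)^2+\mu_n^2}$ is of order $\mu_n$; hence one must normalize the \emph{third–fourth} block, i.e. write the eigenvector in the form $C_{2,n}^\pm\left(\dfrac{(\lambda_{2,n}^\pm)^2+\mu_n^2}{\alpha(\lambda_{2,n}^\pm)^2}e_n,\dfrac{(\lambda_{2,n}^\pm)^2+\mu_n^2}{\alpha\lambda_{2,n}^\pm}e_n,\dfrac{e_n}{\lambda_{2,n}^\pm},e_n\right)^{\top}$ exactly as in \eqref{Coupled Equation 2.8}; then the first component is $O(\mu_n^{-2})$, the second $O(\mu_n^{-1})$, the third $O(\mu_n^{-1})$, and with $C_{2,n}^\pm=\frac1{\sqrt2}$ one gets \eqref{Coupled Equation 2.116}. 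I expect the main obstacle to be purely bookkeeping: keeping the two possible normalizations straight (normalize the $(u,v)$-block on the first branch, the $(y,z)$-block on the second branch, which is forced by which of $|\lambda^2+\mu_n^2|$ is small), and checking that the constants $\frac12$, $\frac1{\sqrt2}$ really produce unit-norm vectors to leading order — no genuinely new estimate beyond Lemmas \ref{Coupled Lemma 2.9}–\ref{Coupled Lemma 2.12} is needed.
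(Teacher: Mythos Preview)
Your proposal is correct and follows essentially the same route as the paper: write the eigenvectors in the generic form \eqref{Coupled Equation 2.8}, use the factorization \eqref{Coupled Equation 2.15.33} to evaluate $(\lambda^{\pm}_{j,n})^2+\mu_n^2$ exactly when $a=1$, use the crude estimate $(\lambda^{\pm}_{1,n})^2+\mu_n^2=-(a-1)\mu_n^2+O(1)$ when $a\neq1$, and normalize with $B^\pm_{1,n}=C^\pm_{2,n}=\tfrac12$ (resp.\ $\tfrac1{\sqrt2}$). The only cosmetic slips are a missing minus sign in one exponent and that your ``third $O(\mu_n^{-1})$'' refers to the size of the component rather than the error in the expansion; neither affects the argument.
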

\begin{proof}
Let $\lambda_{1,n}^{\pm},\  \lambda_{2,n}^{\pm}$ be the solutions of \eqref{Coupled Equation 2.15}. Setting 
\begin{equation*}
B_{1,n}=\frac{B_{1,n}^\pm}{\lambda_{1,n}^\pm}\  
   \ \ \ \ \text{and} \ \ \   
 C_{2,n}=\frac{C_{2,n}^\pm}{\lambda_{2,n}^\pm}
\end{equation*}
 in \eqref{Coupled Equation 2.5}, we get
 \begin{equation*}
   C_{1,n}=\frac{\alpha}{\left(\lambda_{1,n}^{\pm}\right)^2+\mu^2_n } B_{1,n}^{\pm}
   \ \ \ \ \text{and} \ \ \   
 B_{2,n} =\frac{\left(\lambda_{2,n}^{\pm}\right)^2+\mu^2_n}{\alpha\left(\lambda_{2,n}^{\pm}\right)^2} C_{2,n}^{\pm}.
\end{equation*}
Therefore, from  \eqref{Coupled Equation 2.4}, we obtain
\begin{equation}\label{Coupled Equation 2.8new87}
\begin{array}{ll}
e_{1,n}^{\pm}=B_{1,n}^{\pm}\left(\dfrac{e_n}{\lambda_{1,n}^{\pm}}, e_n ,  \dfrac{\alpha e_n}{\left(\lambda_{1,n}^{\pm}\right)^2+\mu^2_n },  \dfrac{\alpha\lambda_{1,n}^{\pm}e_n}{\left(\lambda_{1,n}^{\pm}\right)^2+\mu^2_n }  \right)^{\top},

   \\ \\
   
e_{2,n}^{\pm}=C_{2,n}^{\pm}\left(\dfrac{\left(\lambda_{2,n}^{\pm}\right)^2+\mu^2_n}{\alpha\left(\lambda_{2,n}^{\pm}\right)^2}e_n, \dfrac{\left(\lambda_{2,n}^{\pm}\right)^2+\mu^2_n}{\alpha\lambda_{2,n}^{\pm}}e_n, \dfrac{e_n}{\lambda_{2,n}^{\pm}}, e_n    \right)^{\top},
\end{array}
\end{equation}
are the eigenvectors  corresponding  to the four eigenvalues $\lambda_{1,n}^{\pm},\  \lambda_{2,n}^{\pm}$, where  $B_{1,n}^{\pm},\  C_{2,n}^{\pm}\in\mathbb{C}$. Now, we prove \eqref{Coupled Equation 2.45} and \eqref{Coupled Equation 2.46}. From \eqref{Coupled Equation 2.43} and \eqref{Coupled Equation 2.44}, we have
\begin{equation}\label{Coupled Equation 2.61}
\lambda_{1,n}^{\pm}=\pm   i   \mu_n+O\left(1\right)\ \ \ \text{ and }\ \ \ 
\lambda_{2,n}^{\pm}=\pm   i   \mu_n+O\left(1\right).
\end{equation}
Therefore,
\begin{equation}\label{Coupled Equation 2.62}
\frac{1}{\lambda_{1,n}^{\pm}}=\frac{1}{\pm  i   \mu_n}+O\left(\frac{1}{\mu_n^2}\right)
\ \ \ \text{ and } \ \ \ 
\frac{1}{\lambda_{2,n}^{\pm}}=\frac{1}{\pm  i   \mu_n}+O\left(\frac{1}{\mu_n^2}\right).
\end{equation}
Next, from \eqref{Coupled Equation 2.15.33}, we obtain
\begin{equation}\label{Coupled Equation 2.62newnew}
\frac{\alpha}{\left(\lambda^{\pm}_{1,n}\right)^2+\mu_n^2}=-\frac{2\alpha}{
\left(\mu^{2\gamma}_n-i\sqrt{4\alpha^2-\mu^{4\gamma}_n}\right)\lambda_{1,n}^{\pm}}
\ \ \ \text{and}\ \ \ 
\frac{\left(\lambda^{\pm}_{2,n}\right)^2+\mu_n^2}{\alpha\lambda^{\pm}_{2,n}}=-\frac{\mu^{2\gamma}_n+i\sqrt{4\alpha^2-\mu^{4\gamma}_n}}{2\alpha}.
\end{equation}
From \eqref{Coupled Equation 2.62} and \eqref{Coupled Equation 2.62newnew}, we get
\begin{equation}\label{Coupled Equation 2.63}
\frac{\alpha}{\left(\lambda^{\pm}_{1,n}\right)^2+\mu_n^2}=\frac{1}{\mp\mu_n}+O\left(\frac{1}{\mu_n^{\min\left(2,1-2\gamma\right)}}\right),\ \frac{\alpha \lambda^{\pm}_{1,n}}{\left(\lambda^{\pm}_{1,n}\right)^2+\mu_n^2}=-  i   +O\left(\frac{1}{\mu_n^{-2\gamma}}\right)
\end{equation}
and
\begin{equation}\label{Coupled Equation 2.65}
\frac{\left(\lambda^{\pm}_{2,n}\right)^2+\mu_n^2}{\alpha\lambda^{\pm}_{2,n}}=-  i   +O\left(\frac{1}{\mu_n^{-2\gamma}}\right)
,\ \frac{\left(\lambda^{\pm}_{2,n}\right)^2+\mu_n^2}{\alpha\left(\lambda^{\pm}_{2,n}\right)^2}=\frac{1}{\mp\mu_n}+O\left(\frac{1}{\mu_n^{\min\left(2,1-2\gamma\right)}}\right).
\end{equation}
Setting $B_{1,n}^\pm=\frac{1}{2}$ in the first equation of \eqref{Coupled Equation 2.8new87}, then using \eqref{Coupled Equation 2.62} and \eqref{Coupled Equation 2.63} we get \eqref{Coupled Equation 2.45}. Finally, setting $C_{2,n}^\pm=\frac{1}{2}$ in the second equation of \eqref{Coupled Equation 2.8new87}, then using \eqref{Coupled Equation 2.62} and \eqref{Coupled Equation 2.65}, we obtain \eqref{Coupled Equation 2.46}. 
Our next aim is to prove \eqref{Coupled Equation 2.115} and \eqref{Coupled Equation 2.116}. From \eqref{Coupled Equation 2.113}, we have 
\begin{equation}\label{Coupled Equation 2.129}
\lambda_{1,n}^\pm=\pm  i   \sqrt{a}\mu_n+O\left(\frac{1}{\mu_n^{\min\left(1,-2\gamma\right)}}\right).
\end{equation}
Consequently, we obtain
\begin{equation}\label{Coupled Equation 2.130}
\frac{1}{\lambda^\pm_{1,n}}=\frac{1}{\pm  i   \sqrt{a}\mu_n}+O\left(\frac{1}{\mu^2_n}\right),
\quad 
\frac{\alpha}{\left(\lambda_{1,n}^\pm\right)^2+\mu_n^2}=O\left(\frac{1}{\mu^2_n}\right)
\ \text{and }\  
\frac{\alpha\lambda_{1,n}^\pm}{\left(\lambda_{1,n}^\pm\right)^2+\mu_n^2}=O\left(\frac{1}{\mu_n}\right).
\end{equation}
Setting $B^\pm_{1,n}=\frac{1}{\sqrt{2}}$ in the first estimation of  \eqref{Coupled Equation 2.8new87}, then using \eqref{Coupled Equation 2.130}, we get  \eqref{Coupled Equation 2.115}. On the  other hand, from \eqref{Coupled Equation 2.114}, we have 
\begin{equation}\label{Coupled Equation 2.131}
\lambda_{2,n}^\pm=\pm  i   \mu_n+O\left(\frac{1}{\mu_n}\right).
\end{equation}  
Consequently, we obtain
\begin{equation}\label{Coupled Equation 2.132}
\frac{1}{\lambda^\pm_{2,n}}=\frac{1}{\pm  i   \mu_n}+O\left(\frac{1}{\mu^2_n}\right),\quad \frac{\left(\lambda_{2,n}^\pm\right)^2+\mu_n^2}{\alpha\lambda_{2,n}^\pm}=O\left(\frac{1}{\mu_n}\right)\ 
\text{ and }\
 \frac{\left(\lambda_{2,n}^\pm\right)^2+\mu_n^2}{\alpha\left(\lambda_{2,n}^\pm\right)^2}=O\left(\frac{1}{\mu_n^2}\right).
\end{equation}
Finally,  setting $C^\pm_{2,n}=\frac{1}{\sqrt{2}}$  in the second estimation of  \eqref{Coupled Equation 2.8new87}, then using \eqref{Coupled Equation 2.132}, we obtain  \eqref{Coupled Equation 2.116}. Thus, the proof is complete.
\end{proof}
\noindent Similar to Corollary  \ref{Coupled Lemma 2.5}, we have the following Corollary. 
\begin{cor}\label{Coupled Lemma 2.13}
{ From Lemma \ref{eigenvectorspoly},  we deduce that
\begin{equation}\label{Coupled Equation 2.133}
\left(e_{1,n}^+,e_{1,n}^-,e_{2,n}^+,e_{2,n}^-\right)=\left(E_{1,n}^+,E_{1,n}^-,E_{2,n}^+,E_{2,n}^-\right)L_n,
\end{equation}
where
\begin{equation}\label{Coupled Equation 2.134}
L_n=\left\{
\begin{array}{lll}

\displaystyle{ \frac{1}{\sqrt{2}}
\begin{pmatrix}
1 &0&-  i   &0
           \\ \noalign{\medskip}
0 &1&0&-  i   
           \\ \noalign{\medskip}
-  i    &0&1&0
           \\ \noalign{\medskip}
0&-  i   &0 &1
           \\ \noalign{\medskip}
\end{pmatrix}+
O\left(\frac{1}{\mu_n^{\min\left(1,-2\gamma\right)}}\right),}& \displaystyle{\text{if } a=1\ \text{ and }\ \gamma<0,}

            \\ \\

\displaystyle{I_{4}+ O\left(\frac{1}{\mu_n}\right),}&\displaystyle{\text{if }  a\neq1\ \text{ and }\ \gamma\leq0,}
\end{array}
\right.
\end{equation}}
where $I_4$ denotes the identity matrix.\xqed{$\square$}
\end{cor}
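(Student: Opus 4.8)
The plan is to read off the decomposition \eqref{Coupled Equation 2.133}--\eqref{Coupled Equation 2.134} directly from the eigenvector expansions of Lemma \ref{eigenvectorspoly}, exactly in the way Corollary \ref{Coupled Lemma 2.5} follows from Lemma \ref{Coupled Lemma 2.4a}. Recall that, by \eqref{Coupled Equation 2.34} and the computation carried out in the proof of Proposition \ref{Coupled Theorem 2.7}, the family $\{E_{1,n}^{+},E_{1,n}^{-},E_{2,n}^{+},E_{2,n}^{-}\}$ is an orthonormal basis of the subspace $W_n$, so expanding a vector of $W_n$ in it just amounts to recovering its four $e_n$-components in $\mathcal{H}$. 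Moreover, for $n$ large the four eigenvalues $\lambda_{1,n}^{\pm},\lambda_{2,n}^{\pm}$ of $\mathcal{A}$ attached to $\mu_n$ are pairwise distinct (by Lemmas \ref{Coupled Lemma 2.9} and \ref{Coupled Lemma 2.12}, using $\alpha\neq0$ in the case $a=1$), so the eigenvectors $e_{1,n}^{\pm},e_{2,n}^{\pm}$ are linearly independent and the matrix $L_n$ in \eqref{Coupled Equation 2.133} is well defined: its $j$-th column is the coordinate vector, in the ordered basis $(E_{1,n}^{+},E_{1,n}^{-},E_{2,n}^{+},E_{2,n}^{-})$, of the $j$-th vector of the list $(e_{1,n}^{+},e_{1,n}^{-},e_{2,n}^{+},e_{2,n}^{-})$.

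First I would handle the case $a=1$, $\gamma<0$. Matching the leading term of \eqref{Coupled Equation 2.45} with the $E_{k,n}^{\pm}$ is elementary linear algebra: equating the second and fourth $\mathcal{H}$-components (those of size $O(1)$) produces the sums of the coordinates attached to $E_{1,n}^{+},E_{1,n}^{-}$ and to $E_{2,n}^{+},E_{2,n}^{-}$, and equating the first and third components after multiplication by $\pm i\mu_n$ produces their differences; one finds that the leading term of $e_{1,n}^{+}$ is $\frac{1}{\sqrt{2}}E_{1,n}^{+}-\frac{i}{\sqrt{2}}E_{2,n}^{+}$, which is precisely the first column of the constant matrix on the first line of \eqref{Coupled Equation 2.134}, and one repeats the computation for $e_{1,n}^{-},e_{2,n}^{+},e_{2,n}^{-}$ from \eqref{Coupled Equation 2.45}--\eqref{Coupled Equation 2.46}. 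The same procedure applied to the remainder terms yields the error matrix: a remainder whose $\mathcal{H}$-components are $O(\mu_n^{-2})$, $0$, $O(\mu_n^{-\min(2,1-2\gamma)})$ and $O(\mu_n^{2\gamma})$ has coordinates of sizes $O(\mu_n^{-1})$, $O(\mu_n^{2\gamma})$ and $O(\mu_n^{1-\min(2,1-2\gamma)})$ once the first and third slots are rescaled by $\mu_n$, and since $1-\min(2,1-2\gamma)=-\min(1,-2\gamma)$ while $2\gamma\le-\min(1,-2\gamma)$ and $-1\le-\min(1,-2\gamma)$ for all $\gamma<0$, each of these is $O(\mu_n^{-\min(1,-2\gamma)})$; this is the first line of \eqref{Coupled Equation 2.134}. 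The case $a\neq1$, $\gamma\le0$ is shorter: by \eqref{Coupled Equation 2.115}--\eqref{Coupled Equation 2.116} the leading terms of $e_{1,n}^{\pm}$ and $e_{2,n}^{\pm}$ are already $E_{1,n}^{\pm}$ and $E_{2,n}^{\pm}$, so the constant part of $L_n$ is $I_4$, and the remainders, with $\mathcal{H}$-components $(O(\mu_n^{-2}),0,O(\mu_n^{-2}),O(\mu_n^{-1}))$ and $(O(\mu_n^{-2}),O(\mu_n^{-1}),O(\mu_n^{-2}),0)$, produce coordinates of size $O(\mu_n^{-1})$ after the rescaling; this is the second line of \eqref{Coupled Equation 2.134}.

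I do not expect a serious obstacle here. The only delicate point is the exponent bookkeeping in the case $a=1$, $\gamma<0$: one must check that the \emph{a priori} different decay rates appearing in the four slots of $\mathcal{H}$ in Lemma \ref{eigenvectorspoly} all collapse, after the rescaling of the two velocity-over-frequency slots by $\pm i\mu_n$, to the single rate $\mu_n^{-\min(1,-2\gamma)}$ stated in \eqref{Coupled Equation 2.134}. One must also keep track of which sign in $E_{k,n}^{\pm}$ the leading term of $e_{k,n}^{\pm}$ involves (the same one), which is exactly what gives $L_n$ the block structure displayed in \eqref{Coupled Equation 2.134}.
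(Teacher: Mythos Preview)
Your proposal is correct and follows exactly the approach the paper intends: the paper states this corollary without proof, merely noting that it follows from Lemma~\ref{eigenvectorspoly} in the same way Corollary~\ref{Coupled Lemma 2.5} follows from Lemma~\ref{Coupled Lemma 2.4a}. Your explicit coordinate computation and the exponent bookkeeping (in particular the verification that $2\gamma\le -\min(1,-2\gamma)$ and $-1\le -\min(1,-2\gamma)$ for all $\gamma<0$) fill in precisely the details the paper leaves to the reader.
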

\noindent Similar to Proposition \ref{Coupled Theorem 2.7}, we can prove the following proposition. 
\begin{prop}\label{Coupled Theorem 2.14new}
{Whether $a=1$ and  $\gamma<0$ or $a\neq1$ and $\gamma\leq0$, the
system of eigenvectors $\left\{e_{1,n}^+,e_{1,n}^-,e_{2,n}^+,e_{2,n}^-\right\}_{n\geq1}$ of $\mathcal{A}$ given in Lemma \ref{eigenvectorspoly} forms a Riesz basis in $\mathcal{H}$. In particular, all eigenvectors of $\mathcal{A}$ are of  the form  given in \eqref{Coupled Equation 2.4}.}\xqed{$\square$}
\end{prop}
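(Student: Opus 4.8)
The plan is to transcribe, essentially word for word, the proof of Proposition~\ref{Coupled Theorem 2.7}, the only new ingredient being the transition matrices $L_n$ of Corollary~\ref{Coupled Lemma 2.13}. One applies the Rao--Loreti perturbation lemma (Proposition~\ref{Chapter pr-57}) with $\{X_n\}_{n\ge1}=\{W_n\}_{n\ge1}$ the unperturbed subspaces of \eqref{eq:Wn}, $\{Y_n\}_{n\ge1}=\{V_n\}_{n\ge1}$ with $V_n$ as in \eqref{eq:Vn}, and, inside each $W_n$, the basis $\{E_{1,n}^{+},E_{1,n}^{-},E_{2,n}^{+},E_{2,n}^{-}\}$; the family produced by that lemma is then exactly $\{e_{1,n}^{+},e_{1,n}^{-},e_{2,n}^{+},e_{2,n}^{-}\}_{n\ge1}$.

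First I would check the three hypotheses of Proposition~\ref{Chapter pr-57}. Expanding an arbitrary $U=(u,v,y,z)^{\top}\in\mathcal H$ in the Hilbert basis $(e_n)_{n\ge1}$ and regrouping exactly as in the proof of Proposition~\ref{Coupled Theorem 2.7} shows $U=\sum_{n\ge1}U_n$ with $U_n\in W_n$ and the $W_n$ pairwise orthogonal, so $\{W_n\}_{n\ge1}$ is a Riesz basis of subspaces of $\mathcal H$; since $\{E_{1,n}^{\pm},E_{2,n}^{\pm}\}$ is orthonormal in $W_n$ (a short computation using the $\sqrt a$ normalisation in \eqref{Coupled Equation 2.34}), condition \eqref{Coupled Equation 2.40} holds with $c=C=1$. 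As every component of each $e_{k,n}^{\pm}$ is a scalar multiple of $e_n$, the $V_n$ are pairwise orthogonal as well, hence $\{V_n\}_{n\ge1}$ is a Riesz sequence of subspaces. Finally, Corollary~\ref{Coupled Lemma 2.13} exhibits $L_n$ whose constant leading term is the $4\times4$ matrix displayed in \eqref{Coupled Equation 2.134}, which is invertible in either of the two cases ($a=1$, $\gamma<0$, or $a\ne1$, $\gamma\le0$); so for $n$ large $L_n$ is an isomorphism $W_n\to V_n$ and, the remainder being $o(1)$, the two-sided estimate \eqref{Coupled Equation 2.39} holds on the tail. Proposition~\ref{Chapter pr-57} then gives that $\{e_{1,n}^{+},e_{1,n}^{-},e_{2,n}^{+},e_{2,n}^{-}\}_{n\ge n_0}$ is a Riesz basis of the closed span of $\{W_n\}_{n\ge n_0}$, and adjoining the finitely many (generalised) eigenvectors attached to the modes $n<n_0$ completes it to a Riesz basis of $\mathcal H$.

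For the last assertion, let $(\lambda,U)$ be an eigenpair of $\mathcal A$: writing \eqref{Coupled Equation 2.3} in the basis $(e_n)_{n\ge1}$ decouples it into the $2\times2$ systems \eqref{Coupled Equation 2.5}, whose $n$-th coordinates vanish unless $\lambda$ solves the characteristic equation \eqref{charact eq} relative to $\mu_n$; since by Lemmas~\ref{Coupled Lemma 2.9} and \ref{Coupled Lemma 2.12} the four branch values $\lambda_{k,n}^{\pm}$ are, for $n$ large, pairwise distinct (here $\alpha\ne0$, resp.\ $a\ne1$, keeps the two branches apart), each such $\lambda$ is a root for a single index $n$, whence $U$ has the form \eqref{Coupled Equation 2.4}. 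The only genuinely delicate point --- and hence the main, if mild, obstacle --- is the bookkeeping at the finitely many low frequencies $n<n_0$: there one must confirm the $e_{k,n}^{\pm}$ stay linearly independent, i.e.\ that \eqref{charact eq} has no double root for those $\mu_n$, and if such a root occurs replace the offending eigenvector by a generalised one exactly as in Case~2 of Proposition~\ref{Coupled Theorem 2.7}. Apart from this, the whole substance of the argument is the invertibility of the leading term of $L_n$ in \eqref{Coupled Equation 2.134}, which is immediate.
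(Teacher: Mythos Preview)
Your proposal is correct and coincides with the paper's approach: the paper simply states that the proof is ``similar to Proposition~\ref{Coupled Theorem 2.7}'' and supplies no further details, so your transcription of that argument with the transition matrices $L_n$ of Corollary~\ref{Coupled Lemma 2.13} is precisely what is intended. Your attention to the low-frequency bookkeeping (possible double roots for small $n$ and the need to substitute generalised eigenvectors there) is in fact more careful than the paper's own treatment of the analogous point in Proposition~\ref{Coupled Theorem 2.7}.
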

\noindent \textbf{Proof of Theorem \ref{Coupled Theorem 2.8}.} First, if $a=1$ and $\gamma<0$, then from Lemma  \ref{Coupled Lemma 2.9}, we remark that $\Re\left(\lambda_n\right)+\displaystyle{\frac{1}{4\mu_{n}^{-2\gamma}}}=o(1)$, as $n$ tends to infinity. Therefore, by Proposition \ref{Chapter pr-45}, we get \eqref{Coupled Equation 2.42} where $\delta\left(\gamma\right)=\displaystyle{-\frac{1}{\gamma}}$. Next, if $a\neq1$ and $\gamma\leq0$, then from Lemma \ref{Coupled Lemma 2.12}, we remark that
 $$\Re\left(\lambda^\pm_{1,n}\right)\sim\displaystyle{-\frac{1}{2\mu_n^{-2\gamma}}} \ \ \textrm{and} \ \ \Re\left(\lambda^\pm_{2,n}\right)\sim\displaystyle{-
\frac{\alpha^2}{2\left(a-1\right)^2\mu_n^{2-2\gamma}}}.$$
Therefore, by Proposition \ref{Chapter pr-45}, we get \eqref{Coupled Equation 2.42} where $\delta\left(\gamma\right)=\displaystyle{\frac{1}{1-\gamma}}$.
Furthermore, from Proposition \ref{Coupled Theorem 2.14new}, the system of eigenvectors of $\mathcal{A}$ forms a Riesz basis in $\mathcal{H}$. Then, applying  Proposition \ref{Chapter pr-45}, we get the optimal polynomial energy decay rate given in \eqref{Coupled Equation 2.42}. Thus, the proof is  complete.\xqed{$\square$}

\section{Examples}\label{Coupled Section 5}
\noindent Let $\Omega\subset\mathbb{R}^N$ be a bounded open set with a smooth boundary $\Gamma$.\
\\[0.1in] \textbf{Example 1.} Consider the system of weakly coupled wave equations
\begin{equation}\label{Coupled Equation 3.1}
\left\{
\begin{array}{lll}

\utt-a\Delta u+\left(-\Delta\right)^{\gamma}\ut+\alpha \yt=0&\text{in }\Omega,

           \\ \noalign{\medskip}
           
\ytt-\Delta y-\alpha \ut=0&\text{in }\Omega,

           \\ \noalign{\medskip}

u=y=0&\text{on }\Gamma,

\end{array}
\right.
\end{equation}
with the following initial conditions
\begin{equation*}
u\left(x,0\right)=u_0\left(x\right)
,\ 
u_t\left(x,0\right)=u_1\left(x\right)
,\ 
y\left(x,0\right)=y_0\left(x\right)
,\ 
y_t\left(x,0\right)=y_1\left(x\right),
\end{equation*}
where $\gamma\leq0,\ a>0$, and $\alpha$ is a real number. We define the operator $A$ in $L^2\left(\Omega\right)$ by
\begin{equation*}
A=-\Delta \ \text{   with   }\ D\left(A\right)=H^2\left(\Omega\right)\cap H^1_0\left(\Omega\right).
\end{equation*}
We easily get that $A$ is a densely defined, closed, self-adjoint and coercive  operator with compact resolvent in $L^2\left(\Omega\right)$. We also assume that the spectrum of $A$ is simple. Note that this assumption is generic (in the Baire sense) with respect to the domain
$\Omega$, according to \cite{Dan01}. 
Therefore:\\[0.1in]
\noindent When $a=1$ and $\gamma=0$, applying Theorem \ref{Coupled Theorem},   we obtain an exponential energy decay rate  given by
\begin{equation*}
\left\|e^{t\mathcal{A}}u_0\right\|_{\mathcal{H}}
\leq
 M e^{-\epsilon t}\left\|u_0\right\|_{\mathcal{H}}, \quad t>0,\quad u_0\in {\mathcal{H}}.
\end{equation*}
When $a=1$ and $\gamma<0$ or when $a\neq0$ and $\gamma\leq0$, applying Theorem \ref{Coupled Theorem 2.8}, we obtain an optimal polynomial energy decay rate of the form
\begin{equation*}
E\left(t\right)\leq \frac{C}{t^{\delta\left(\gamma\right)}}\left\|u_0\right\|^2_{D\left(\mathcal{A}\right)},\quad  t>0,\quad u_0\in D\left(\mathcal{A}\right).
\end{equation*}
    %
             
%
$\newline$ 
\textbf{Example 2.} Consider the system of weakly coupled plate equations given by
\begin{equation}\label{Coupled Equation 3.2}
\left\{
\begin{array}{lll}

\utt+a\Delta^2 u+\left(\Delta^2\right)^{\gamma}\ut+\alpha \yt=0&\text{in }\Omega,

           \\ \noalign{\medskip}

\ytt+\Delta^2 y-\alpha \ut=0&\text{in }\Omega,

           \\ \noalign{\medskip}

u=\frac{\partial u}{\partial n}=y=\frac{\partial y}{\partial n}=0&\text{on }\Gamma,

\end{array}
\right.
\end{equation}
with the following initial conditions
\begin{equation*}
u\left(x,0\right)=u_0\left(x\right)
,\ 
u_t\left(x,0\right)=u_1\left(x\right)
,\ 
y\left(x,0\right)=y_0\left(x\right)
,\ 
y_t\left(x,0\right)=y_1\left(x\right),
\end{equation*}
where $\gamma\leq0,\ a>0$ and $\alpha$ is a real number. We define the operator $A$ in $L^2\left(\Omega\right)$ by
\begin{equation*}
A=\Delta^2 \ \text{   with   }\ D\left(A\right)=H^4\left(\Omega\right)\cap H^2_0\left(\Omega\right).
\end{equation*}
Here, $A$ is a densely defined, closed, self-adjoint and coercive operator with compact resolvent in $L^2\left(\Omega\right)$ and we furthermore assume that the spectrum of $A$ is simple, assumption which holds generically with respect to the domain $\Omega$, according to \cite{Dan01}. Then:\\[0.1in]
\noindent Applying Theorem \ref{Coupled Theorem}  with $a=1$ and $\gamma=0$, we get
\begin{equation*}
\left\|e^{t\mathcal{A}}u_0\right\|_{\mathcal{H}}
\leq
 M e^{-\epsilon t}\left\|u_0\right\|_{\mathcal{H}}, \quad t>0,\quad u_0\in {\mathcal{H}}.
\end{equation*}
Applying Theorem \ref{Coupled Theorem 2.8}, we obtain
\begin{equation*}
E\left(t\right)\leq \frac{C}{t^{\delta\left(\gamma\right)}}\left\|u_0\right\|^2_{D\left(\mathcal{A}\right)},\quad t>0,\quad u_0\in D\left(\mathcal{A}\right).
\end{equation*}
    %
             
%

\end{document}